\newtheorem{thm}{Theorem}
\newtheorem{cor}{Corollary}
\newtheorem{defn}{Definition}
\newtheorem{example}{Example}
\newtheorem{lem}{Lemma}
\newtheorem{prop}{Proposition}
\newenvironment{proof}[1][Proof]{\noindent\textbf{#1.} }{\ \rule{0.5em}{0.5em}}
\begin{document}

\title{\textbf{Uncertainty Principle for the multivariate two sided  continuous quaternion Windowed Fourier transform }}

\author{ Kamel Brahim
\thanks{Department of Mathematics, College of Science, University of Bisha, Bisha 61922, p.o Box 344, Saudi Arabia.}
\thanks{ Faculty of Sciences of Tunis. University of Tunis El Manar, Tunis, Tunisia.\newline  E-mail :
kamel.brahim@fst.utm.tn}
\qquad \& \quad  Emna Tefjeni
\thanks{ Faculty of Sciences of Tunis. University of Tunis El Manar, Tunis, Tunisia.\newline  E-mail :
tefjeni.emna@outlook.fr}}
\date{}
\maketitle
\begin{abstract}
In this paper, we generalize the continuous quaternion windowed Fourier transform on $\mathbb{R}^{2}$ to $\mathbb{R}^{2d}$, called the multivariate two sided continuous quaternion windowed Fourier transform. Using the two sided quaternion Fourier transform, we derive several important properties such as (reconstruction formula, reproducing kernel, plancherel's formula, etc.). We present several example of the multivariate two sided continuous quaternion windowed Fourier transform. We apply the multivariate two sided continuous quaternion windowed Fourier transform properties and the two sided quaternion Fourier transform to establish Lieb uncertainty principle, the Logarithmic uncertainty principle the Beckner's uncertainty principle in term of entropy and the Heisenberg uncertainty principle for the multivariate two sided continuous quaternion windowed Fourier transform, the radar quaternion ambiguity function and the quaternion-Wigner transform. Last we study the multivariate two sided continuous quaternion windowed Fourier transform, the radar quaternion ambiguity function and the quaternion-Wigner transform on subset of finite measures.
%The aim of this paper is to continue the study of the fundamental properties and the uncertainty principle to the multivariate two sided continuous quaternion windowed Fourier transform, which has been started in \cite{article1,article2} for the two sided continuous quaternion window Fourier transform. The first of these  results (Parseval's formula, inversion formula, reproducing kernel, etc.) are established. The second result we prove an analogue of Lieb uncertainty principle, the Logarithmic uncertainty principle the Beckner's uncertainty principle in term of entropy and the Heisenberg uncertainty principle for the multivariate two sided continuous quaternion Windowed Fourier transform, the Radar quaternion ambiguity function and the quaternion-Wigner transform. Last we study the multivariate two sided continuous quaternion Windowed Fourier transform, the Radar quaternion ambiguity function and the quaternion-Wigner transform on subset of finite measures.
%In this paper we study the quaternion windowed Fourier transform and generalize the Local uncertainty principle, the Beckner's uncertainty principle in term of entropy, Lieb uncertainty principle, Amrein berthier, Heiseberg  and the Logarithmic uncertainty principle for the quaternion Windowed Fourier transform, the Radar quaternion ambiguity function and the quaternion-Wigner transform .
\end{abstract}
\textbf{Keywords:} quaternion Fourier transform; quaternion windowed Fourier transform; the radar quaternion ambiguity function; quaternion-Wigner transform; uncertainty principle.

\section{Introduction}
Uncertainty principles are mathematical results that give limitations on the simultaneous concentration of a function and its Euclidean Fourier transform. They have implications in two main areas: quantum physics and signal analysis. In quantum physics, they tell us that a particle's speed and position cannot both be measured with infinite precision. In signal analysis, they tell us that if we observe a signal only for a finite period of time, we will lose information about the frequencies the signal consists of. There are many ways to get the statement about concentration precise. For more details about uncertainty principles, we refer the reader to \cite{uncertainty1,uncertainty2}.\\
For a quaternion function $f \in L^{2}(\mathbb{R}^{2d},\mathbb{H})$ and a non zero quaternion function $g \in L^{2}(\mathbb{R}^{2d},\mathbb{H})$ called a quaternion window function,
the multivariate two sided  continuous quaternion windowed Fourier transform QWFT of $f$ with respect to $g$ is defined on $\mathbb{R}^{2d}\times\mathbb{R}^{2d}$ by
\begin{equation*}
\mathcal{G}_{g}(f)(x,w) = \displaystyle\int_{\mathbb{R}^{2d}} e^{-is.u} f(y)\overline{g(y-x)} e^{-jt.v}\ d\mu_{2d}(y),\ \ \ w = (u,v), y=(s,t).
\end{equation*}
The multivariate two sided  continuous quaternion Windowed Fourier transform  is closely related to other common and known time frequency distribution as the radar quaternion ambiguity function QAF on $\mathbb{R}^{2d}\times\mathbb{R}^{2d}$ by
\begin{equation*}
A(f,g)(x,w) = \displaystyle\int_{\mathbb{R}^{2d}} e^{-is.u} f(y + \dfrac{x}{2})\overline{g(y-\frac{x}{2})} e^{-jt.v}\ d\mu_{2d}(y),\ \ \ w = (u,v), y=(s,t).
\end{equation*}
and the quaternion Wigner transform QWVT defined on $\mathbb{R}^{2d}\times\mathbb{R}^{2d}$
\begin{equation*}
W(f,g)(x,w) = \displaystyle\int_{\mathbb{R}^{2d}} e^{-is.u} f(x + \dfrac{y}{2})\overline{g(x-\frac{y}{2})} e^{-jt.v}\ d\mu_{2d}(y),\ \ \ w = (u,v), y=(s,t).
\end{equation*}
The aim of this paper is to  generalize the continuous quaternion windowed Fourier transform on $\mathbb{R}^{2}$ to $\mathbb{R}^{2d}$, called the multivariate two sided continuous quaternion windowed Fourier transform which has been started in \cite{article1,article2}.\\
Our purpose in this work  is to prove three Lieb uncertainty principle for both of the QWFT, QWVT and QAF. We also prove the Logarithmic uncertainty principle, the Beckner uncertainty principle in terme of entropy, Heisenberg uncertainty principle for the two sided quaternion windowed Fourier transform. Last we study the multivariate two sided continuous quaternion windowed Fourier transform on subset of finite measures.
%%%%%%%%%%%%%%%%%%%%%%%%%%%%%%%%%%%%%%%%%%%%%%%%%%%%%%%%%%%%%%%%%%%%%%%%%%%%%%%%%%%%%%%%%%%%%%%%%%%%%%%%%%%%%%%%%%%%%%%%%%%%%%%%%%%%%%%%%%%%%%%%%%%%%%%%%%%%%%%%%%%%%%%%%%%%%%%%%%%%%%%%%%%%%%%%%%%%%%%%%%%%%%%%%%%%%%%%%%%%%%%%%%%%%%%%%%%%%%%%%%%%%%%%%%%%%%%%%%%%%%%%%%%%%%%%%%%%%%%%%%
%The idea of extending the windowed Fourier transform to the quaternion algebra setting has been recently studied by Bulow and Sommer. Hahn constructed a Fourier-Wigner distribution of 2-D quaternionic signals which is in fact closely related to the QWFT.\\
%The organization in this paper. We provide an analogue of the Local price inequality, the Beckner, Lieb, Amrein berthier, Heisenberg and the Logarithmic uncertainty principle for the two-sided quaternion windowed Fourier transform.\\
Our paper is organized as follows: In section 2, we present basic notions and notations related to the quaternion Fourier transform. In section 3, we recall the definition and some results for the two sided quaternion windowed Fourier transform useful in the sequel. In section 4, we provide  the Lieb uncertainty principle, the Logarithmic uncertainty principle, The Beckner's uncertainty principle in terms of entropy  for the two-sided quaternion windowed Fourier transform.

%%%%%%%%%%%%%%%%%%%%%%%%%%%%%%%%%%%%%%%%%%%%%%%%%%%%%%%%%%%%%%%%%%%%%%%%%%%%%%%%%%%%%%%%%%%%%%%%%%%%%%%%%%%%%%%%%%%%%%%%%%%%%%%%%%%%%%%%%%%%%%%%%%%%%%%%%%%%%%%%%%%%%%%%%%%%%%%%%%%%%%%%%%%%%%%%%%%%%%%%%%%%%%%%%%%%%%%%%%%%%%%%%%%%%%%%%%%%%%%%%%%%%%%%%%%%%%%%%%%%%%%%%%%%%%%%%%%%%%%%%%%%%%%%%%%%%%%%%%%%%%%
\section{Generalities}
For convenience of further discussions, we briefly review some basic ideas on quaternions \cite{quaternion} 	and the (two-sided) quaternion Fourier transform \cite{quaternionfourier}. The quaternion algebra over $\mathbb{R}$, denoted by $\mathbb{H}$, is an associative non commutative four-dimensional algebra,
%The quaternion algebra was formally introduced by Hamilton in 1843, and it is a generalization of complex numbers. The quaternion algebra over $\mathbb{R}$, denoted by $\mathbb{H}$, is an associative non-commutative four-dimensional algebra,\\
\begin{center}
$\mathbb{H} = \{q = q_{0} + iq_{1} + jq_{2} + kq_{3} \hspace{0.1 cm}|\hspace{0.1 cm} q_{0}, q_{1}, q_{2}, q_{3} \in \mathbb{R}\},$
\end{center}
which obeys Hamilton's multiplication rules
\begin{center}
$ij = - ji = k$,\hspace{0.2 cm} $jk = -kj = i$,\hspace{0.2 cm}  $ki = -ik = j$,\hspace{0.2 cm} $i^{2} = j^{2} = k^{2} = ijk = -1$.
\end{center}
The quaternion conjugate of a quaternion $q$ is given by
\begin{center}
$\overline{q} = q_{0} - iq_{1} - jq_{2} - kq_{3}$, \hspace{0.3 cm} $ q_{0}, q_{1}, q_{2}, q_{3} \in \mathbb{R}$.
\end{center}
The quaternion conjugation is a linear anti-involution
\begin{center}
$\overline{pq} = \overline{q}~\overline{p}$, \hspace{0.3 cm} $\overline{p+q} = \overline{p}+\overline{q}$, \hspace{0.3 cm} $\overline{\overline{p}} = p$.
\end{center}
The modulus of a quaternion $q$ is defined by
\begin{center}
$|q| = \sqrt{q\overline{q}} = \sqrt{q_{0}^{2}+ q_{1}^{2}+q_{2}^{2}+ q_{3}^{2}}$.
\end{center}
It is not difficult to see that
\begin{center}
$|pq| = |p| |q|$, \hspace{0.3 cm} $\forall p,q \in\mathbb{H}$.
\end{center}
The real scalar part $q_{0}$
\begin{center}
$q_{0} = Sc(q)$
\end{center}
leads to a cyclic multiplication symmetry
\begin{center}
$Sc(qrs) = Sc(rsq) \ \ \ q, r,s \in \mathbb{H}$.
\end{center}
Each quaternion can be split by  \cite{quaternionfourier2}
\begin{equation*}
q = q_{+} + q_{-},\ \ \ q_{\pm} = \dfrac{1}{2}(q \pm iqj).
\end{equation*}
By the real components $q_{0},q_{1},q_{2},q_{3}$ in $\mathbb{R}$, we have
\begin{equation*}
q_{\pm} = \{q_{0}\pm q_{3} + i(q_{1}\mp q_{2})\}\dfrac{1\pm k}{2} = \dfrac{1\pm k}{2}\{q_{0}\pm q_{3} + j(q_{1}\pm q_{2})\}.
\end{equation*}
For $q$ in $\mathbb{H}$, we have
\begin{equation*}
|q|^{2} = |q_{+}|^{2} + |q_{-}|^{2} .
\end{equation*}
%%%%%%%%%%%%%%%%%%%%%%%%%%%%%%%%%%%%%%%%%%%%%%%%%%%%%%%%%%%%%%%%%%%%%%%%%%%%%%%%%%%%%%%%%%%%%%%%%%%%%%%%%%%%%%%%%%%%%%%%%%%%%%%%%%%%%%%%%%%%%%%%%%%%%%%%%%%%%%%%%%%%%%%%%%%%%%%%%%%%%%%%%%%%%%%%%%%%%%%%%%%%%%%%%%%%%%%%%%%%%%%%%%%%%%%%%%%%%%%%%%%%%%%%%%%%%%%%%%%%%%%%%%%%%%%%%%%%%%%%%%%%%%%%%%%%%%%%%%%%
A quaternion-valued function $f:\mathbb{R}^{d} \longrightarrow \mathbb{H}$ will be written as \begin{center}
$f(x) = f_{0}(x) + i f_{1}(x) + jf_{2}(x) +kf_{3}(x)$,
\end{center}
with real-valued coefficient functions $f_{0}, f_{1}, f_{2}, f_{3}:\mathbb{R}^{d}\longmapsto\mathbb{R}$.\\
We defined by $\mu_{d}$ the normalized Lebesgue measure on  $\mathbb{R}^{d}$ by $d\mu_{d}(x) = \dfrac{dx}{(2\pi)^{\frac{d}{2}}}.$\\
%%%%%%%%%%%%%%%%%%%%%%%%%%%%%%%%%%%%%%%%%%%%%%%%%%%%%%%%%%%%%%%%%%%%%%%%%%%%%%%%%%%%%%%%%%%%%%%%%%%%%%%%%%%%%%%%%%%%%%%%%%%%%%%%%%%%%%%%%%%%%%%%%%%%%%%%%%%%%%%%%%%%%%%%%%%%%%%%%%%%%%%%%%%%%%%%%%%%%%%%%%%%%%%%%%%%%%%%%%%%%%%%%%%%%%%%%%%%%%%%%%%%%%%%%%%%%%%%%%%%%%%%%%%%%%%%%%%%%%%%%%%%%%%%%%%%%%%%%%%%%%%
If $1\leq p < \infty$, the $L^{p}$-norm of $f: \mathbb{R}^{d} \longrightarrow \mathbb{H}$ is defined by
\begin{equation}\label{normp}
\|f\|_{p,d} = \bigg(\displaystyle\int_{\mathbb{R}^{d}} |f(x)|^{p} d\mu_{d}(x)\bigg)^{\frac{1}{p}}.
\end{equation}
For $p = \infty$, $L^{\infty}(\mathbb{R}^{d},\mathbb{H})$ is a collection of essentially bounded measurable functions with the norm
$$\|f\|_{\infty} = ess \displaystyle\sup_{x\in \mathbb{R}^{d}} |f(x)|$$
if $f \in L^{\infty}(\mathbb{R}^{d},\mathbb{H})$ is continuous then
\begin{equation}\label{norminfty}
\|f\|_{\infty} = \displaystyle\sup_{x\in \mathbb{R}^{d}} |f(x)|.
\end{equation}
For $p=2$, we can define the quaternion-valued inner product
\begin{equation}\label{&}
(f,g)_{2,d} = \displaystyle\int_{\mathbb{R}^{d}} f(x) \overline{g(x)} d\mu_{d}(x)
\end{equation}
with symmetric real scalar part
\begin{eqnarray}\label{pro}
\langle f,g\rangle_{2,d}\ & = & \ \dfrac{1}{2}\big[(f,g)_{2,d} + (g,f)_{2,d}\big]\ = \ \displaystyle\int_{\mathbb{R}^{d}} Sc(f(x) \overline{g(x)}) d\mu_{d}(x)\nonumber\\
& = & Sc\bigg(\displaystyle\int_{\mathbb{R}^{d}} f(x) \overline{g(x)} d\mu_{d}(x)\bigg).
\end{eqnarray}
Both (\ref{&}) and (\ref{pro}) lead to the $L^{2}(\mathbb{R}^{d},\mathbb{H})$-norm
\begin{equation}
\|f\|_{2,d} = \sqrt{(f,f)_{2,d}} = \sqrt{\langle f,f\rangle_{2,d}}  =~\left(\int_{\mathbb{R}^{d}} |f(x)|^{2} d\mu_{d}(x)\right)^{\frac{1}{2}}.
\end{equation}
As a consequence of the inner product (\ref{pro}) we obtain the quaternion Cauchy-Schwartz inequality
\begin{eqnarray}\label{cs}
\bigg|\displaystyle\int_{\mathbb{R}^{d}} f(x)\overline{g(x)} d\mu_{d}(x)\bigg| \leq \bigg(\displaystyle\int_{\mathbb{R}^{d}} |f(x)|^{2}d\mu_{d}(x)\bigg)^{\frac{1}{2}}\bigg(\displaystyle\int_{\mathbb{R}^{d}} |g(x)|^{2}d\mu_{d}(x)\bigg)^{\frac{1}{2}}\hspace{0.2 cm} \forall f , g \in L^{2}(\mathbb{R}^{d},\mathbb{H}).
\end{eqnarray}
%%%%%%%%%%%%%%%%%%%%%%%%%%%%%%%%%%%%%%%%%%%%%%%%%%%%%%%%%%%%%%%%%%%%%%%%%%%%%%%%%%%%%%%%%%%%%%%%%%%%%%%%%%%%%%%%%%%%%%%%%%%%%%%%%%%%%%%%%%%%%%%%%%%%%%%%%%%%%%%%%%%%%%%%%%%%%%%%%%%%%%%%%%%%%%%%%%%%%%%%
%%%%%%%%%%%%%%%%%%%%%%%%%%%%%%%%%%%%%%%%%%%%%%%%%%%%%%%%%%%%%%%%%%%%%%%%%%%%%%%%%%%%%%%%%%%%%%%%%%%%%%%%%%%%%%%%%%%%%%%%%%%%%%%%%%%%%%%%%%%%%%%%%%%%%%%%%%%%%%%%%%%%%%%%%%%%%%%%%%%%%%%%%%%%%%%%%%%%%%%%
For two function $f$, $g$ $\in L^{2}(\mathbb{R}^{d},\mathbb{H})$. Using (\ref{pro}) and (\ref{cs}) the Schwartz inequality takes the form
\begin{equation}\label{hes1}
\bigg(\displaystyle\int_{\mathbb{R}^{d}}(g(x)\overline{f(x)} + f(x)\overline{g(x)})d\mu_{d}(x)\bigg)^{2} \leq 4\ \bigg(\displaystyle\int_{\mathbb{R}^{d}}|f(x)|^{2}d\mu_{d}(x)\bigg) \bigg(\displaystyle\int_{\mathbb{R}^{d}}|g(x)|^{2}d\mu_{d}(x)\bigg).
\end{equation}
The convolution of $f \in L^{2}(\mathbb{R}^{d},\mathbb{H})$ and $g \in L^{2}(\mathbb{R}^{d},\mathbb{H})$, denoted by $f*g$, is defined by
\begin{equation}\label{conv}
(f*g)(x) = \displaystyle\int_{\mathbb{R}^{d}} f(y) g(x - y) d\mu_{d}(y).
\end{equation}
For every $x$ $\in \mathbb{R}^{d}$, we denote by $T_{x}$ the translation operator defined by
\begin{center}
$\forall t \in \mathbb{R}^{d}$, \hspace{0.3 cm} $T_{x}f(t) = f(t-x)$.
\end{center}
Let $\lambda > 0$, we denote by $f_{\lambda}$ the dilate of $f$ by $f_{\lambda}(x) = f(\lambda x)$.
%%%%%%%%%%%%%%%%%%%%%%%%%%%%%%%%%%%%%%%%%%%%%%%%%%%%%%%%%%%%%%%%%%%%%%%%%%%%%%%%%%%%%%%%%%%%%%%%%%%%%%%%%%%%%%%%%%%%%%%%%%%%%%%%%%%%%%%%%%%%%%%%%%%%%%%%%%%%%%%%%%%%%%%%%%%%%%%%%%%%%%%%%%%%%%%%%%%%%%%%%%
\begin{defn}\label{definition1}
\mbox{}\\
Let $\alpha = (\alpha_{1},\alpha_{2},\alpha_{2},\dots,\alpha_{d})$ be a multi-index of non negative integers. One denote
\begin{center}
$|\alpha| = \displaystyle\sum_{p=1}^{d}\alpha_{p}$\hspace{0.5 cm} and \hspace{0.5 cm}$\alpha! = \displaystyle\prod_{p=1}^{d} \alpha_{p}$.
\end{center}
and for $x = (x_{1},x_{2},\alpha_{2},\dots,x_{d}) \in \mathbb{R}^{d}$, \hspace{0.3 cm}$x^{\alpha} = \displaystyle\prod_{p=1}^{d} x_{p}^{\alpha_{p}}$.\\
Derivative are conveniently expressed by multi-indices
\begin{center}
$d^{\alpha} = \dfrac{d^{|\alpha|}}{dx_{1}^{\alpha_{1}}dx_{2}^{\alpha_{2}}dx_{3}^{\alpha_{3}}......dx_{d}^{\alpha_{d}}}.$
\end{center}
Next we obtain the Schwartz space
\begin{center}
$\mathcal{S}(\mathbb{R}^{d},\mathbb{H}) = \bigg\{f \in C^{\infty}(\mathbb{R}^{d},\mathbb{H}) ; \displaystyle\sup_{x\in\mathbb{R}^{d}} (1 + |x|^{p})|d^{\alpha}f(x)|< \infty\bigg\}$
\end{center}
where $C^{\infty}(\mathbb{R}^{d},\mathbb{H})$ is the set of smooth function from $\mathbb{R}^{d}$ to $\mathbb{H}$.
\end{defn}
%%%%%%%%%%%%%%%%%%%%%%%%%%%%%%%%%%%%%%%%%%%%%%%%%%%%%%%%%%%%%%%%%%%%%%%%%%%%%%%%%%%%%%%%%%%%%%%%%%%%%%%%%%%%%%%%%%%%%%%%%%%%%%%%%%%%%%%%%%%%%%%%%%%%%%%%%%%%%%%%%%%%%%%%%%%%%%%%%%%%%%%%%%%%%%%%%%%%%%%%%%%%%%%%%%%%%%%%%%%%%%%%%%%%%%%%%%%%%%%%%%%%%%%%%%%%%%%%%%%%%%%%%%%%%%%%%%%%%%%%%%
The quaternion Fourier transform is defined similarly to the classical Fourier transform of the 2D functions. The non commutative property of quaternion multiplication allows us to have three different definitions of the quaternion Fourier transform QFT. In the following we briefly introduce the two sided QFT. For more details we refer the reader to \cite{fields,8,sangwine}.
%%%%%%%%%%%%%%%%%%%%%%%%%%%%%%%%%%%%%%%%%%%%%%%%%%%%%%%%%%%%%%%%%%%%%%%%%%%%%%%%%%%%%%%%%%%%%%%%%%%%%%%%%%%%%%%%%%%%%%%%%%%%%%%%%%%%%%%%%%%%%%%%%%%%%%%%%%%%%%%%%%%%%%%%%%%%%%%%%%%%%%%%%%%%%%%%%%%%%%%%%%%%%%%%%%%%%%%%%%%%%%%%%%%%%%%%%%%%%%%%%%%%%%%%%%%%%%%%%%%%%%%%%%%%%%%%%%%%%%%%%%
\begin{defn}
The two sided Quaternionic Fourier transform of a function $f \in L^{1}(\mathbb{R}^{2d},\mathbb{H})\cap L^{2}(\mathbb{R}^{2d},\mathbb{H})$ is defined by \cite{quaternionfourier1,7}
\begin{equation}\label{QFT}
\mathcal{F}_{Q}(f)(u,v) = \displaystyle\int_{\mathbb{R}^{2d}} e^{-ix.u} f(x,y)\ e^{-jy.v} d\mu_{2d}(x,y).
\end{equation}
It satisfies Plancherel's formula
\begin{equation}\label{plan2}
\|f\|_{2,2d} = \|\mathcal{F}_{Q}(f)\|_{2,2d}.
\end{equation}
As a consequence $\mathcal{F}_{Q}$ extends to a unitary operator on $L^{2}(\mathbb{R}^{2d},\mathbb{H})$ and satisfies Parseval's formula
\begin{equation}\label{plan1}
\langle f,g\rangle_{2,2d} = \langle\mathcal{F}_{Q}(f),\mathcal{F}_{Q}(g)\rangle_{2,2d}.
\end{equation}
The inverse quaternion Fourier transform of a function $f\in L^{1}(\mathbb{R}^{2d},\mathbb{H})$ with $\mathcal{F}_{Q}(f)\in L^{1}(\mathbb{R}^{2d},\mathbb{H})$ is given as
\begin{equation*}
f(x,y)  = \displaystyle\int_{\mathbb{R}^{2d}} e^{ix.u} \mathcal{F}_{Q}(f)(u,v)e^{jy.v} d\mu_{2d}(u,v).
\end{equation*}
%%%%%%%%%%%%%%%%%%%%%%%%%%%%%%%%%%%%%%%%%%%%%%%%%%%%%%%%%%%%%%%%%%%%%%%%%%%%%%%%%%%%%%%%%%%%%%%%%%%%%%%%%%%%%%%%%%%%%%%%%%%%%%%%%%%%%%%%%%%%%%%%%%%%%%%%%%%%%%%%%%%%%%%%%%%%%%%%%%%%%%%%%%%%%%%%%%%%%%%%
By the two dimensional plane split of the quaternion signal, the quaternion Fourier transform becomes
\begin{equation*}
\mathcal{F}_{Q}(f)(u,v) = \mathcal{F}_{Q}(f_{+}+f_{-})(u,v) = \mathcal{F}_{Q}(f_{+})(u,v) + \mathcal{F}_{Q}(f_{-})(u,v).
\end{equation*}
The quaternion Fourier transform of $f_{\pm}$ have simple complex forms \cite{quaternionfourier2,quaternionfourier3}
\begin{equation*}
\mathcal{F}_{Q}(f_{\pm})(u,v) = \displaystyle\int_{\mathbb{R}^{2d}} e^{-i(x.u \mp y.v)} f_{\pm}(x,y)\ d\mu_{2d}(x,y),
\end{equation*}
then
\begin{equation*}
\mathcal{F}_{Q}(f_{\pm})(u,v) = \mathcal{F}_{c}(f_{+})(u,-v) + \mathcal{F}_{c}(f_{-})(u,v),
\end{equation*}
where $\mathcal{F}_{c}$ is a complex Fourier transform defined by
\begin{equation*}
\mathcal{F}_{c}(f)(\xi) = \displaystyle\int_{\mathbb{R}^{2d}} e^{-ix.\xi} f(x) d\mu_{2d}(x).
\end{equation*}
Due to $|q|^{2} = |q_{-}|^{2} + |q_{+}|^{2}$, for $f:\mathbb{R}^{2d}\longrightarrow \mathbb{H}$, we have the following two identities \cite{quaternionfourier2,quaternionfourier3}
\begin{equation*}
|f(x)|^{2} = |f_{+}(x)|^{2} + |f_{-}(x)|^{2},\ \mbox{and} \ |\mathcal{F}_{Q}(f)(\xi)|^{2} = |\mathcal{F}_{Q}(f_{+})(\xi)|^{2} + |\mathcal{F}_{Q}(f_{-})(\xi)|^{2}.
\end{equation*}
Let $f$ $\in L^{1}(\mathbb{R}^{2d},\mathbb{H})$; $\alpha$, $\beta > 0$ and $(x,y) , (w,\sigma) \in \mathbb{R}^{2d}$ then
\begin{center}
$\mathcal{F}_{Q}(f(\frac{x}{\alpha},\frac{y}{\beta}))(w,\sigma) = \alpha^{d} \beta^{d} \mathcal{F}_{Q}(f(x,y))(\alpha w , \beta \sigma) .$
\end{center}
Consider a two-dimensional Gaussian function of the form $f(x) = e^{-(\frac{a|x|^{2}+b|y|^{2}}{2})}$ where $a, b$ are non-zero,  positive constants. Then the QFT of $f$ is given by
\begin{center}
$\mathcal{F}_{Q}(e^{-\frac{a |x|^{2}+b |y|^{2}}{2}})(w,\sigma) =\dfrac{1}{(ab)^{\frac{d}{2}}} e^{-(\frac{|w|^{2}}{2a}+\frac{|\sigma|^{2}}{2b})}$.
\end{center}
\end{defn}
\begin{lem}\label{lemma1}(Derivative theorem)\cite{7}
\begin{enumerate}
\item[(1)]Let $f$ in $L^{1}(\mathbb{R}^{2d},\mathbb{H})\cap L^{2}(\mathbb{R}^{2d},\mathbb{H})$. Let $\frac{d^{\gamma}}{dx^{\gamma}}f$ $\in L^{2}(\mathbb{R}^{2d},\mathbb{H})$, $\alpha,\gamma$ in $ \mathbb{N}^{d}$; $|\gamma|\leq |\alpha|$, then we have
\begin{equation*}
\mathcal{F}_{Q}\bigg(\dfrac{d^{\alpha}}{dx^{\alpha}}f(x,y)\bigg)(u,v) = i^{|\alpha|} u^{\alpha} \mathcal{F}_{Q}(f)(u,v).
\end{equation*}
\item[(2)]Let $f$ in $L^{1}(\mathbb{R}^{2d},\mathbb{H})\cap L^{2}(\mathbb{R}^{2d},\mathbb{H})$. Let $\frac{d^{\xi}}{dy^{\xi}}f$ $\in L^{2}(\mathbb{R}^{2d},\mathbb{H})$, $\beta,\xi$ in $ \mathbb{N}^{d}$; $|\xi|\leq |\beta|$, then we have
\begin{equation*}
\mathcal{F}_{Q}\bigg(\dfrac{d^{\beta}}{ dy^{\beta}}f(x,y)\bigg)(u,v) = \mathcal{F}_{Q}(f)(u,v) j^{|\beta|} v^{\beta}.
\end{equation*}
\end{enumerate}
\end{lem}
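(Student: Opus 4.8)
The plan is to prove both identities by integration by parts, handling one variable at a time and then iterating, with careful attention to where the imaginary units $i$ and $j$ must be placed because of noncommutativity. The essential observation is that in the definition (\ref{QFT}) the kernel $e^{-ix.u}$ stands to the \emph{left} of $f$ while $e^{-jy.v}$ stands to the \emph{right}; differentiating in an $x$-variable will therefore produce a factor $iu_p$ that lives on the left, whereas differentiating in a $y$-variable will produce a factor $jv_p$ that lives on the right. This is exactly what forces the two conclusions to carry their scalar factors on opposite sides.

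For part (1) I would first settle the case of a single first-order derivative $\partial/\partial x_p$. Writing out $\mathcal{F}_{Q}(\partial f/\partial x_p)$ and integrating by parts in the variable $x_p$ (the factor $e^{-jy.v}$ is independent of $x_p$ and may be pulled out to the right during this step), the boundary contribution vanishes, and one is left with $-\int (\partial_{x_p}e^{-ix.u})\, f\, e^{-jy.v}\, d\mu_{2d}$. Since $x.u=\sum_p x_p u_p$ is a sum of terms all carrying the same unit $i$, the exponential $e^{-ix.u}$ is a genuine power series in $i$, so $\partial_{x_p} e^{-ix.u} = -iu_p\, e^{-ix.u}$; because $i$ commutes with $e^{-ix.u}$ and $u_p\in\mathbb{R}$ commutes with everything, the factor $iu_p$ can be pulled out on the left. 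This yields $\mathcal{F}_{Q}(\partial f/\partial x_p)(u,v)=iu_p\,\mathcal{F}_{Q}(f)(u,v)$. Iterating $|\alpha|$ times, and using that the $i$'s and the real scalars $u_p$ all commute among themselves, the successive left factors collect into $i^{|\alpha|}u^{\alpha}$, giving the claimed formula.

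Part (2) is the mirror-image argument. Integrating $\mathcal{F}_{Q}(\partial f/\partial y_p)$ by parts in $y_p$ (now $e^{-ix.u}$ is the constant left factor) moves the derivative onto $e^{-jy.v}$, and since $e^{-jy.v}$ is a power series in $j$ one has $\partial_{y_p} e^{-jy.v} = -jv_p\, e^{-jy.v} = -e^{-jy.v}\,jv_p$, the last equality holding because $j$ commutes with $e^{-jy.v}$ and $v_p$ is real. The sign from integration by parts then leaves the factor $jv_p$ standing to the right of the whole integrand, so $\mathcal{F}_{Q}(\partial f/\partial y_p)(u,v)=\mathcal{F}_{Q}(f)(u,v)\,jv_p$, and iterating produces the right-hand factor $j^{|\beta|}v^{\beta}$.

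The step I expect to require the most care is the justification that the boundary terms in each integration by parts vanish, together with the bookkeeping of noncommutativity during the iteration. The vanishing of boundary terms is where the hypotheses enter: because $f$ and its partial derivatives up to the relevant order all lie in $L^{2}(\mathbb{R}^{2d},\mathbb{H})$ (the condition $|\gamma|\le|\alpha|$ in part (1), and the analogue in part (2)), each intermediate derivative is controlled in the direction of integration and hence tends to $0$ as $x_p\to\pm\infty$ (resp. $y_p\to\pm\infty$), so no boundary contribution survives at any stage of the iteration. The noncommutativity bookkeeping is then routine once one notes the single guiding fact that $i$ only ever emerges from the left kernel and $j$ only from the right kernel, while the real frequencies $u_p,v_p$ pass freely through both; this keeps $i^{|\alpha|}u^{\alpha}$ on the left and $j^{|\beta|}v^{\beta}$ on the right with no crossing of $i$ past $j$.
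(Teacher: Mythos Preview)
The paper does not supply its own proof of this lemma; it simply cites the result from \cite{7}. Your integration-by-parts argument, with the placement of $i$ on the left and $j$ on the right dictated by the two-sided kernel in (\ref{QFT}), is exactly the standard derivation and is correct. The only point worth tightening is the vanishing of the boundary terms: from $f$ and $\partial_{x_p}f$ both in $L^2$ one gets, for a.e.\ fixed value of the remaining variables, that the one-variable slice lies in $W^{1,2}(\mathbb{R})\subset C_0(\mathbb{R})$, which is what makes the boundary contribution disappear at each stage; you have identified this as the delicate step, and the hypothesis on intermediate derivatives is precisely what is needed to run the iteration.
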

%%%%%%%%%%%%%%%%%%%%%%%%%%%%%%%%%%%%%%%%%%%%%%%%%%%%%%%%%%%%%%%%%%%%%%%%%%%%%%%%%%%%%%%%%%%%%%%%%%%%%%%%%%%%%%%%%%%%%%%%%%%%%%%%%%%%%%%%%%%%%%%%%%%%%%%%%%%%%%%%%%%%%%%%%%%%%%%%%%%%%%%%%%%%%%%%%%%%%%%%%
\begin{lem}\label{lemma2}
\mbox{}\\
Let $f$ in $L^{1}(\mathbb{R}^{2d},\mathbb{H})\cap L^{2}(\mathbb{R}^{2d},\mathbb{H})$. If $\frac{d}{dx_{p}}f$ exist and are in $L^{2}(\mathbb{R}^{2d},\mathbb{H})$ for $p \in \{1,...2d\}$, then
\begin{equation}\label{hes2}
\displaystyle\int_{\mathbb{R}^{2d}} \bigg|\dfrac{d}{dx_{p}}f(x)\bigg|^{2}d\mu_{2d}(x) = \displaystyle\displaystyle\int_{\mathbb{R}^{2d}} w_{p}^{2}|\mathcal{F}_{Q}(f)(w)|^{2} d\mu_{2d}(w).
\end{equation}
\end{lem}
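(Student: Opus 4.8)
The plan is to derive this identity directly from Plancherel's formula (\ref{plan2}) by first computing the two sided QFT of the partial derivative via the derivative theorem (Lemma \ref{lemma1}). The only structural choice to make is to split the index $p\in\{1,\dots,2d\}$ according to whether the coordinate $x_{p}$ lies among the first $d$ variables (which couple with $i$ on the \emph{left} through $e^{-ix.u}$) or among the last $d$ variables (which couple with $j$ on the \emph{right} through $e^{-jy.v}$); this matches the two separate parts of Lemma \ref{lemma1}. In either case the hypotheses $f\in L^{1}(\mathbb{R}^{2d},\mathbb{H})\cap L^{2}(\mathbb{R}^{2d},\mathbb{H})$ and $\frac{d}{dx_{p}}f\in L^{2}(\mathbb{R}^{2d},\mathbb{H})$ are exactly what is needed to legitimately apply the derivative theorem (with a multi-index of length $1$, so the order condition $|\gamma|\leq|\alpha|$ is trivially met) and then Plancherel's formula to the $L^{2}$ function $\frac{d}{dx_{p}}f$.

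First I would treat the case $p\in\{1,\dots,d\}$. Taking $\alpha=e_{p}$ the multi-index with a single $1$ in position $p$, so that $|\alpha|=1$ and $u^{\alpha}=w_{p}$, part (1) of Lemma \ref{lemma1} yields $\mathcal{F}_{Q}\big(\frac{d}{dx_{p}}f\big)(w)=i\,w_{p}\,\mathcal{F}_{Q}(f)(w)$. Applying Plancherel's formula (\ref{plan2}) to $\frac{d}{dx_{p}}f$ gives $\int_{\mathbb{R}^{2d}}|\frac{d}{dx_{p}}f(x)|^{2}\,d\mu_{2d}(x)=\int_{\mathbb{R}^{2d}}|i\,w_{p}\,\mathcal{F}_{Q}(f)(w)|^{2}\,d\mu_{2d}(w)$. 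Since $w_{p}$ is a real scalar and $|i|=1$, the multiplicativity of the quaternion modulus $|pq|=|p||q|$ collapses the integrand to $w_{p}^{2}|\mathcal{F}_{Q}(f)(w)|^{2}$, which is the desired right-hand side.

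The case $p\in\{d+1,\dots,2d\}$ is handled symmetrically, and this is the only place where a genuine subtlety arises. Writing $q=p-d$ and taking $\beta=e_{q}$, part (2) of Lemma \ref{lemma1} now places the Fourier multiplier on the \emph{right}: $\mathcal{F}_{Q}\big(\frac{d}{dx_{p}}f\big)(w)=\mathcal{F}_{Q}(f)(w)\,j\,v_{q}$, where $v_{q}=w_{p}$. Because quaternion multiplication is non-commutative, the factor cannot simply be pulled to the left; however, this is immaterial once the modulus is taken, since $|pq|=|p||q|$ is insensitive to the order of the factors and to the position of the scalar. Thus $|\mathcal{F}_{Q}(f)(w)\,j\,v_{q}|=|\mathcal{F}_{Q}(f)(w)|\,|j|\,|v_{q}|=w_{p}\,|\mathcal{F}_{Q}(f)(w)|$, using $|j|=1$, and Plancherel's formula again produces $\int_{\mathbb{R}^{2d}}w_{p}^{2}|\mathcal{F}_{Q}(f)(w)|^{2}\,d\mu_{2d}(w)$. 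The hardest point to state carefully is therefore not an estimate but a bookkeeping one: ensuring that the right-sided multiplier $j v_{q}$ is correctly absorbed by the modulus; once that is observed, both cases close identically and the lemma follows.
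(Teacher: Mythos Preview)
Your proposal is correct and matches the paper's implicit argument: the paper states Lemma~\ref{lemma2} without proof, immediately after the derivative theorem (Lemma~\ref{lemma1}), so combining Lemma~\ref{lemma1} with Plancherel's formula (\ref{plan2}) is precisely the intended route, and your case split $p\le d$ versus $p>d$ together with the observation that $|pq|=|p||q|$ disposes of the left/right placement of the multiplier is exactly what is needed. One cosmetic slip: in the second case you write $|v_{q}|=w_{p}$, but since $w_{p}$ may be negative this should read $|v_{q}|=|w_{p}|$; of course this is irrelevant after squaring.
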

%%%%%%%%%%%%%%%%%%%%%%%%%%%%%%%%%%%%%%%%%%%%%%%%%%%%%%%%%%%%%%%%%%%%%%%%%%%%%%%%%%%%%%%%%%%%%%%%%%%%%%%%%%%%%%%%%%%%%%%%%%%%%%%%%%%%%%%%%%%%%%%%%%%%%%%%%%%%%%%%%%%%%%%%%%%%%%%%%%%%%%%%%%%%%%%%%%%%%%%%
%%%%%%%%%%%%%%%%%%%%%%%%%%%%%%%%%%%%%%%%%%%%%%%%%%%%%%%%%%%%%%%%%%%%%%%%%%%%%%%%%%%%%%%%%%%%%%%%%%%%%%%%%%%%%%%%%%%%%%%%%%%%%%%%%%%%%%%%%%%%%%%%%%%%%%%%%%%%%%%%%%%%%%%%%%%%%%%%%%%%%%%%%%%%%%%%%%%%%%%%%%%%%%%%%%%%%%%%%%%%%%%%%%%%%%%%%%%%%%%%%%%%%%%%%%%%%%%%%%%%%%%%%%%%%%%%%%%%%%%%%%%%%%%%%%%%%%%%%%%%%%%

As consequence of  (\ref{hes2}) we immediately obtain the following theorem.
%%%%%%%%%%%%%%%%%%%%%%%%%%%%%%%%%%%%%%%%%%%%%%%%%%%%%%%%%%%%%%%%%%%%%%%%%%%%%%%%%%%%%%%%%%%%%%%%%%%%%%%%%%%%%%%%%%%%%%%%%%%%%%%%%%%%%%%%%%%%%%%%%%%%%%%%%%%%%%%%%%%%%%%%%%%%%%%%%%%%%%%%%%%%%%%%%%%%%%%%%%%%%%%%%%%%%%%%%%%%%%%%%%%%%%%%%%%%%%%%%%%%%%%%%%%%%%%%%%%%%%%%%%%%%%%%%%%%%%%%%%%%%%%%%%%%%%%%%%%%
\begin{thm}\label{theorem1} (Component-wise uncertainty principle for $\mathcal{F}_{Q}$)
\mbox{}\\
Let  $f$ in $L^{1}(\mathbb{R}^{2d},\mathbb{H})\cap L^{2}(\mathbb{R}^{2d},\mathbb{H})$. we have the following inequality
\begin{equation}\label{H&sh}
\bigg(\displaystyle\int_{\mathbb{R}^{2d}} x_{p}^{2} |f(x)|^{2} d\mu_{2d}(x)\bigg) \bigg(\displaystyle\int_{\mathbb{R}^{2d}} w_{p}^{2} |\mathcal{F}_{Q}(f)(w)|^{2} d\mu_{2d}(w)\bigg)\geq \dfrac{1}{4} \bigg(\displaystyle\int_{\mathbb{R}^{2d}} |f(x)|^{2} d\mu_{2d}(x) \bigg)^{2}
\end{equation}
such as $p \in \{1,2,.....,2d\}$.
\end{thm}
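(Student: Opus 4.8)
The plan is to combine the derivative--Plancherel identity of Lemma \ref{lemma2} with the quaternionic Schwartz inequality (\ref{hes1}), the whole argument resting on a single integration by parts. We may assume that both factors on the left-hand side of (\ref{H&sh}) are finite, since otherwise the inequality is trivial, and, by a standard density argument, that $f$ is smooth with enough decay to make the boundary terms below vanish.

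First I would rewrite the frequency-side factor. By Lemma \ref{lemma2}, applied in the variable $x_p$,
\begin{equation*}
\int_{\mathbb{R}^{2d}} w_p^2 |\mathcal{F}_{Q}(f)(w)|^2 d\mu_{2d}(w) = \int_{\mathbb{R}^{2d}} \left|\frac{d}{dx_p}f(x)\right|^2 d\mu_{2d}(x),
\end{equation*}
so it suffices to prove the purely ``time-side'' inequality
\begin{equation*}
\left(\int_{\mathbb{R}^{2d}} x_p^2 |f(x)|^2 d\mu_{2d}(x)\right)\left(\int_{\mathbb{R}^{2d}} \left|\frac{d}{dx_p}f(x)\right|^2 d\mu_{2d}(x)\right) \geq \frac14 \left(\int_{\mathbb{R}^{2d}} |f(x)|^2 d\mu_{2d}(x)\right)^2.
\end{equation*}

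The core of the argument is the integration by parts. Since $|f|^2 = f\overline{f}$ is real-valued, the product rule gives $\frac{d}{dx_p}|f|^2 = (\frac{d}{dx_p}f)\overline{f} + f\,\overline{\frac{d}{dx_p}f}$, and writing $1 = \frac{d}{dx_p}x_p$ and integrating by parts in $x_p$ (the boundary term dropping by the decay assumption) yields
\begin{equation*}
\int_{\mathbb{R}^{2d}} |f(x)|^2 d\mu_{2d}(x) = -\int_{\mathbb{R}^{2d}} x_p \left[\left(\frac{d}{dx_p}f\right)\overline{f} + f\,\overline{\frac{d}{dx_p}f}\right] d\mu_{2d}(x).
\end{equation*}
I would then apply (\ref{hes1}) with the choices $F = x_p f$ and $G = \frac{d}{dx_p}f$ in place of $f$ and $g$: because $x_p$ is a real scalar, $G\overline{F} + F\overline{G}$ equals the bracketed integrand above, so the left side of (\ref{hes1}) becomes exactly $\left(\int |f|^2 d\mu_{2d}\right)^2$, while its right side is $4\left(\int x_p^2 |f|^2 d\mu_{2d}\right)\left(\int \left|\frac{d}{dx_p}f\right|^2 d\mu_{2d}\right)$. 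Rearranging and substituting the Lemma \ref{lemma2} identity back for the derivative factor gives (\ref{H&sh}).

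The one genuine obstacle is the justification of the integration by parts, that is, the vanishing of the boundary term $[x_p|f|^2]$ and the integrability of $x_p\frac{d}{dx_p}|f|^2$; these are not automatic for an arbitrary $f \in L^{1}(\mathbb{R}^{2d},\mathbb{H})\cap L^{2}(\mathbb{R}^{2d},\mathbb{H})$. I would handle this by first proving the inequality on the Schwartz space $\mathcal{S}(\mathbb{R}^{2d},\mathbb{H})$, where every manipulation is legitimate, and then extending to the stated class by density, using that the case where $\int x_p^2|f|^2 d\mu_{2d}$ or $\int |\frac{d}{dx_p}f|^2 d\mu_{2d}$ is infinite makes (\ref{H&sh}) hold trivially. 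The non-commutativity of $\mathbb{H}$ causes no difficulty here, since (\ref{hes1}) already encapsulates the Cauchy--Schwarz estimate for the symmetric scalar part and thereby absorbs all ordering issues.
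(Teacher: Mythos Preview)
Your proof is correct and follows essentially the same approach as the paper: convert the frequency factor to $\int |\frac{d}{dx_p}f|^2$ via Lemma~\ref{lemma2}, combine the quaternionic Schwartz inequality (\ref{hes1}) with the product rule $\frac{d}{dx_p}|f|^2 = (\frac{d}{dx_p}f)\overline{f} + f\,\overline{\frac{d}{dx_p}f}$, and finish with integration by parts in $x_p$. The only difference is that you are more careful about the boundary term, supplying the density argument through $\mathcal{S}(\mathbb{R}^{2d},\mathbb{H})$ that the paper leaves implicit.
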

%%%%%%%%%%%%%%%%%%%%%%%%%%%%%%%%%%%%%%%%%%%%%%%%%%%%%%%%%%%%%%%%%%%%%%%%%%%%%%%%%%%%%%%%%%%%%%%%%%%%%%%%%%%%%%%%%%%%%%%%%%%%%%%%%%%%%%%%%%%%%%%%%%%%%%%%%%%%%%%%%%%%%%%%%%%%%%%%%%%%%%%%%%%%%%%%%%%%%%%%%%
\begin{proof}
First, let $p \in \{1,2,.....,2d\}$, this inequality is true if
\begin{equation*}
\displaystyle\int_{\mathbb{R}^{2d}} x_{p}^{2} |f(x)|^{2} d\mu_{2d}(x)=+\infty\ \ \mbox{or} \ \
\displaystyle\int_{\mathbb{R}^{2d}} w_{p}^{2} |\mathcal{F}_{Q}(f)(w)|^{2} d\mu_{2d}(w)=+\infty.
\end{equation*}
 We assume in the sequel that $\|x_{p}f\|_{2,2d} < +\infty$ and  $\|w_{p}\mathcal{F}_{Q}(f)\|_{2,2d}< +\infty$. Applying lemma \ref{lemma2}, we  obtain \\ \ \\
$\bigg(\displaystyle\int_{\mathbb{R}^{2d}} x_{p}^{2} |f(x)|^{2} d\mu_{2d}(x)\bigg) \bigg(\displaystyle\int_{\mathbb{R}^{2d}} w_{p}^{2} |\mathcal{F}_{Q}(f)(w)|^{2} d\mu_{2d}(w)\bigg)$
\begin{eqnarray*}
 & = & \bigg(\displaystyle\int_{\mathbb{R}^{2d}} x_{p}^{2} |f(x)|^{2} d\mu_{2d}(x)\bigg) \bigg(\displaystyle\int_{\mathbb{R}^{2d}}  \bigg|\dfrac{d}{dx_{p}}f(x)\bigg|^{2} d\mu_{2d}(w)\bigg)\\
& \geq &  \dfrac{1}{4}\bigg(\displaystyle\int_{\mathbb{R}^{2d}} x_{p}\bigg( f(x) \overline{\dfrac{d}{dx_{p}}f(x)} + \overline{f(x)} \dfrac{d}{dx_{p}}f(x)\bigg) d\mu_{2d}(w)\bigg)^{2}\\
&  =  & \dfrac{1}{4}\bigg(\displaystyle\int_{\mathbb{R}^{2d}} x_{p}\dfrac{d}{dx_{p}}\bigg( f(x) \overline{f(x)}\bigg) d\mu_{2d}(w)\bigg)^{2}.
\end{eqnarray*}
Second, using integration by parts we further get
\begin{center}
$\bigg(\displaystyle\int_{\mathbb{R}^{2d}} x_{p}^{2} |f(x)|^{2} d\mu_{2d}(x)\bigg) \bigg(\displaystyle\int_{\mathbb{R}^{2d}} w_{p}^{2} |\mathcal{F}_{Q}(f)(w)|^{2} d\mu_{2d}(w)\bigg) \geq \dfrac{1}{4}\bigg(\displaystyle\int_{\mathbb{R}^{2d}} |f(x)|^{2} d\mu_{2d}(x)\bigg)^{2}$.
\end{center}
\end{proof}

%%%%%%%%%%%%%%%%%%%%%%%%%%%%%%%%%%%%%%%%%%%%%%%%%%%%%%%%%%%%%%%%%%%%%%%%%%%%%%%%%%%%%%%%%%%%%%%%%%%%%%%%%%%%%%%%%%%%%%%%%%%%%%%%%%%%%%%%%%%%%%%%%%%%%%%%%%%%%%%%%%%%%%%%%%%%%%%%%%%%%%%%%%%%%%%%%%%%%%%%
%%%%%%%%%%%%%%%%%%%%%%%%%%%%%%%%%%%%%%%%%%%%%%%%%%%%%%%%%%%%%%%%%%%%%%%%%%%%%%%%%%%%%%%%%%%%%%%%%%%%
Using $|x|^{2} = \displaystyle\sum_{p=1}^{2d}x_{p}^{2} \geq x_{p}^{2}$, we get the following corollary.

%%%%%%%%%%%%%%%%%%%%%%%%%%%%%%%%%%%%%%%%%%%%%%%%%%%%%%%%%%%%%%%%%%%%%%%%%%%%%%%%%%%%%%%%%%%%%%%%%%%%%%%%%%%%%%%%%%%%%%%%%%%%%%%%%%%%%%%%%%%%%%%%%%%%%%%%%%%%%%%%%%%%%%%%%%%%%%%%%%%%%%%%%%%%%%%%%%%%%%%%
\begin{cor}
\mbox{}\\
For every function $f$ in $L^{1}(\mathbb{R}^{2d},\mathbb{H})\cap L^{2}(\mathbb{R}^{2d},\mathbb{H})$, we have
\begin{equation}\label{new3}
 \left(\int_{\mathbb{R}^{2d}}|x|^{2}|f(x)|^2 d\mu_{2d}(x)\right)\left(\int_{\mathbb{R}^{2d}}|w|^{2}|\mathcal{F}_{Q}(f)(w)|^2 d\mu_{2d}(w) \right)\geq\frac{1}{4}\left(\int_{\mathbb{R}^{2d}}|f(x)|^2 d\mu_{2d}(x)\right)^2.
\end{equation}
\end{cor}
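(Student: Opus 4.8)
The plan is to obtain this global Heisenberg-type inequality as an immediate consequence of the component-wise estimate \eqref{H&sh} of Theorem \ref{theorem1}, exactly along the lines of the displayed observation $|x|^{2} = \sum_{p=1}^{2d} x_{p}^{2} \geq x_{p}^{2}$.

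First I would fix a single index, say $p = 1$. Since $|x|^{2} \geq x_{1}^{2}$ holds pointwise on $\mathbb{R}^{2d}$ and $|f(x)|^{2} \geq 0$, monotonicity of the integral gives
\[
\int_{\mathbb{R}^{2d}} |x|^{2} |f(x)|^{2}\, d\mu_{2d}(x) \;\geq\; \int_{\mathbb{R}^{2d}} x_{1}^{2} |f(x)|^{2}\, d\mu_{2d}(x),
\]
and, applying the same bound $|w|^{2} \geq w_{1}^{2}$ to the transform $\mathcal{F}_{Q}(f)$,
\[
\int_{\mathbb{R}^{2d}} |w|^{2} |\mathcal{F}_{Q}(f)(w)|^{2}\, d\mu_{2d}(w) \;\geq\; \int_{\mathbb{R}^{2d}} w_{1}^{2} |\mathcal{F}_{Q}(f)(w)|^{2}\, d\mu_{2d}(w).
\]

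Next I would multiply these two nonnegative lower bounds and invoke Theorem \ref{theorem1} with $p = 1$, whose right-hand side is precisely $\frac14\big(\int_{\mathbb{R}^{2d}} |f(x)|^{2}\, d\mu_{2d}(x)\big)^{2}$; this yields \eqref{new3} at once. The only point to verify is that both factors are nonnegative so that the two inequalities may be multiplied termwise, which is automatic here, so I expect no genuine obstacle. The one optional remark I would add is that the constant $\frac14$ is far from sharp: summing \eqref{H&sh} over all $p \in \{1,\dots,2d\}$, noting that $\int |x|^{2}|f|^{2} = \sum_p \int x_p^{2}|f|^{2}$ and likewise for the transform, and then applying the Cauchy--Schwarz inequality to the two resulting sums would improve the constant by a $d$-dependent factor. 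Since only the constant $\frac14$ is asserted in the statement, the single-index argument above already suffices.
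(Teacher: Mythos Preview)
Your argument is correct and is exactly the approach the paper indicates: it simply invokes the pointwise bound $|x|^{2}\geq x_{p}^{2}$ (and likewise for $|w|^{2}$) together with the component-wise inequality \eqref{H&sh} of Theorem~\ref{theorem1} for a single index. Your optional remark on sharpening the constant is a reasonable aside but not needed for the stated corollary.
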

%%%%%%%%%%%%%%%%%%%%%%%%%%%%%%%%%%%%%%%%%%%%%%%%%%%%%%%%%%%%%%%%%%%%%%%%%%%%%%%%%%%%%%%%%%%%%%%%%%%%%%%%%%%%%%%%%%%%%%%%%%%%%%%%%%%%%%%%%%%%%%%%%%%%%%%%%%%%%%%%%%%%%%%%%%%%%%%%%%%%%%%%%%%%%%%%%%%%%%%%
%%%%%%%%%%%%%%%%%%%%%%%%%%%%%%%%%%%%%%%%%%%%%%%%%%%%%%%%%%%%%%%%%%%%%%%%%%%%%%%%%%%%%%%%%%%%%%%%%%%%
In \cite{beckner} Beckner used Stein-Weiss and Pitt's inequalities to obtain a logarithmic estimate of the uncertainty, he showed that for every $f$ in $\mathcal{S}(\mathbb{R}^{d},\mathbb{C})$ we have
\begin{equation}
\displaystyle\int_{\mathbb{R}^{d}} ln|x| |f(x)|^{2}d\mu_{d}(x) + \displaystyle\int_{\mathbb{R}^{d}} ln|w| |\mathcal{F}_{c}(f)(w)|^{2}d\mu_{d}(w) \geq D_{d} \displaystyle\int_{\mathbb{R}^{d}}|f(x)|^{2}d\mu_{d}(x),
\end{equation}
where
\begin{equation}\label{Dd}
D_{d} = \bigg(\dfrac{\Gamma'(\frac{d}{4})}{\Gamma(\frac{d}{4})} + ln(2) \bigg).
\end{equation}
%%%%%%%%%%%%%%%%%%%%%%%%%%%%%%%%%%%%%%%%%%%%%%%%%%%%%%%%%%%%%%%%%%%%%%%%%%%%%%%%%%%%%%%%%%%%%%%%%%%%%%%%%%%%%%%%%%%%%%%%%%%%%%%%%%%%%%%%%%%%%%%%%%%%%%%%%%%%%%%%%%%%%%%%%%%%%%%%%%%%%%%%%%%%%%%%%%%%%%%%%%%%%%%%%%%%%%%%%%%%%%%%%%%%%%%%%%%%%%%%%%%%%%%%%%%%%%%%%%%%%%%%%%%%%%%%%%%%%%%%%%%%%%%%%%%%%%%%%%%%
\begin{thm}\label{logarithm}(Logarithmic uncertainty principle for $\mathcal{F}_{Q}$) For $f$ in $\mathcal{S}(\mathbb{R}^{2d},\mathbb{H})$, we have
\begin{equation}\label{loga}
\displaystyle\int_{\mathbb{R}^{2d}} ln|x| |f(x)|^{2}d\mu_{2d}(x) + \displaystyle\int_{\mathbb{R}^{2d}} ln|w| |\mathcal{F}_{Q}(f)(w)|^{2}d\mu_{2d}(w) \geq D_{2d} \displaystyle\int_{\mathbb{R}^{2d}}|f(x)|^{2}d\mu_{2d}(x),
\end{equation}
where $D_{2d}$ is given by (\ref{Dd}).
\end{thm}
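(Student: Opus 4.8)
The plan is to reduce the quaternionic statement to the classical complex Beckner logarithmic inequality on $\mathbb{R}^{2d}$ (recalled just above the theorem, applied in dimension $2d$) by exploiting the plane split $f = f_{+}+f_{-}$ from Section 2. Two facts make this possible: the orthogonal decompositions
\[
|f(x)|^{2} = |f_{+}(x)|^{2} + |f_{-}(x)|^{2}, \qquad |\mathcal{F}_{Q}(f)(w)|^{2} = |\mathcal{F}_{Q}(f_{+})(w)|^{2} + |\mathcal{F}_{Q}(f_{-})(w)|^{2},
\]
together with the observation that on the split parts the two-sided transform collapses to a one-sided complex transform,
\[
\mathcal{F}_{Q}(f_{\pm})(u,v) = \int_{\mathbb{R}^{2d}} e^{-i(x.u \mp y.v)} f_{\pm}(x,y)\, d\mu_{2d}(x,y) = \mathcal{F}_{c}(f_{\pm})(u,\mp v).
\]
Since left multiplication by $i$ endows $\mathbb{H}$ with the structure of a complex vector space (as $L_{i}^{2}=-\mathrm{Id}$) compatible with the modulus, each $f_{\pm}$ may be treated as a $\mathbb{C}$-valued Schwartz function, and $\mathcal{F}_{c}$ is then the genuine complex Fourier transform acting on it.

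First I would record that $f_{\pm}\in\mathcal{S}(\mathbb{R}^{2d},\mathbb{H})$ whenever $f\in\mathcal{S}(\mathbb{R}^{2d},\mathbb{H})$, since the split is an $\mathbb{R}$-linear operation with constant quaternion coefficients. Next I would apply Beckner's logarithmic inequality to $f_{+}$ and to $f_{-}$ separately, obtaining for each sign
\[
\int_{\mathbb{R}^{2d}} \ln|x|\, |f_{\pm}(x)|^{2} d\mu_{2d}(x) + \int_{\mathbb{R}^{2d}} \ln|w|\, |\mathcal{F}_{c}(f_{\pm})(w)|^{2} d\mu_{2d}(w) \geq D_{2d} \int_{\mathbb{R}^{2d}} |f_{\pm}(x)|^{2} d\mu_{2d}(x).
\]

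The one point requiring care is replacing $\mathcal{F}_{c}(f_{\pm})$ by $\mathcal{F}_{Q}(f_{\pm})$ in the frequency integral. I would handle this with the reflection $v\mapsto -v$ of the last $d$ coordinates: because both $|w|=|(u,v)|=|(u,-v)|$ and $d\mu_{2d}$ are invariant under this substitution, it gives
\[
\int_{\mathbb{R}^{2d}} \ln|w|\, |\mathcal{F}_{c}(f_{\pm})(w)|^{2} d\mu_{2d}(w) = \int_{\mathbb{R}^{2d}} \ln|w|\, |\mathcal{F}_{Q}(f_{\pm})(w)|^{2} d\mu_{2d}(w).
\]
Finally I would add the two inequalities and collapse the right-hand sides together with the space and frequency integrals using the orthogonal decompositions above, which yields exactly (\ref{loga}).

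The genuinely delicate step is the reduction itself: one must be sure that Beckner's complex inequality legitimately applies to the split components, i.e. that $\mathcal{F}_{Q}$ really does act as an honest complex Fourier transform on $f_{\pm}$ and that the modulus appearing in Beckner's inequality coincides with the quaternion modulus. I expect this to be the main obstacle, and it is resolved precisely by the complex-form identity for $\mathcal{F}_{Q}(f_{\pm})$ recalled in Section 2 together with the identity $|q|^{2}=|q_{+}|^{2}+|q_{-}|^{2}$; once these are invoked, the remaining manipulations (the reflection and the additive recombination) are routine.
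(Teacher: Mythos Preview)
Your proposal is correct and follows essentially the same route as the paper: split $f=f_{+}+f_{-}$, apply Beckner's complex logarithmic inequality to each piece, use the reflection $v\mapsto -v$ (needed only for $f_{+}$) to pass from $\mathcal{F}_{c}$ to $\mathcal{F}_{Q}$, and then add using the modulus identities $|f|^{2}=|f_{+}|^{2}+|f_{-}|^{2}$ and $|\mathcal{F}_{Q}(f)|^{2}=|\mathcal{F}_{Q}(f_{+})|^{2}+|\mathcal{F}_{Q}(f_{-})|^{2}$. Your extra remark that left multiplication by $i$ gives $\mathbb{H}$ a complex structure compatible with the modulus is a welcome clarification of why Beckner's inequality legitimately applies to $f_{\pm}$, a point the paper leaves implicit.
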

%%%%%%%%%%%%%%%%%%%%%%%%%%%%%%%%%%%%%%%%%%%%%%%%%%%%%%%%%%%%%%%%%%%%%%%%%%%%%%%%%%%%%%%%%%%%%%%%%%%%%%%%%%%%%%%%%%%%%%%%%%%%%%%%%%%%%%%%%%%%%%%%%%%%%%%%%%%%%%%%%%%%%%%%%%%%%%%%%%%%%%%%%%%%%%%%%%%%%%%%%%%%%%%%%%%%%%%%%%%%%%%%%%%%%%%%%%%%%%%%%%%%%%%%%%%%%%%%%%%%%%%%%%%%%%%%%%%%%%%%%%%%%%%%%%%%%%%%%%%%
\begin{proof}
We have the following equality,
\begin{eqnarray*}
\displaystyle\int_{\mathbb{R}^{2d}} ln|(u,v)| |\mathcal{F}_{c}(f_{+})(u,-v)|^{2} d\mu_{2d}(u,v)
& = &
\displaystyle\int_{\mathbb{R}^{2d}} ln|\sqrt{|u|^{2}+|-v|^{2}}| |\mathcal{F}_{c}(f_{+})(u,v)|^{2} d\mu_{2d}(u,v)\\
& = & \displaystyle\int_{\mathbb{R}^{2d}} ln|\sqrt{|u|^{2}+|v|^{2}}| |\mathcal{F}_{c}(f_{+})(u,v)|^{2} d\mu_{2d}(u,v)\\
& = & \displaystyle\int_{\mathbb{R}^{2d}} ln|(u,v)| |\mathcal{F}_{c}(f_{+})(u,v)|^{2} d\mu_{2d}(u,v),
\end{eqnarray*}
then
\begin{equation*}
\displaystyle\int_{\mathbb{R}^{2d}} |\mathcal{F}_{Q}(f)(u,v)|^{2} d\mu_{2d}(u,v)  =  \displaystyle\int_{\mathbb{R}^{2d}} |\mathcal{F}_{Q}(f_{+})(u,v)|^{2} d\mu_{2d}(u,v) +  \displaystyle\int_{\mathbb{R}^{2d}} |\mathcal{F}_{Q}(f_{-})(u,v)|^{2} d\mu_{2d}(u,v)
\end{equation*}
Using the Logarithmic uncertainty principle for $f_{\pm}$ together and by the modulus identities, we get\\
$\displaystyle\int_{\mathbb{R}^{2d}} ln|x| |f(x)|^{2} d\mu_{2d}(x)
+
\displaystyle\int_{\mathbb{R}^{2d}} ln|w| |\mathcal{F}_{Q}(f)(w)|^{2} d\mu_{2d}(w)$
\begin{eqnarray*}
& = & \displaystyle\int_{\mathbb{R}^{2d}} ln|x| |f_{+}(x)|^{2} d\mu_{2d}(x) + \displaystyle\int_{\mathbb{R}^{2d}} ln|x| |f_{-}(x)|^{2} d\mu_{2d}(x)\\
& + & \displaystyle\int_{\mathbb{R}^{2d}} ln|w| |\mathcal{F}_{Q}(f_{+})(w)|^{2} d\mu_{2d}(w) + \displaystyle\int_{\mathbb{R}^{2d}} ln|w| |\mathcal{F}_{Q}(f_{-})(w)|^{2} d\mu_{2d}(w)\\
& \geq & D_{2d}\bigg(\displaystyle\int_{\mathbb{R}^{2d}}|f_{+}(x)|^{2}d\mu_{2d}(x) + \displaystyle\int_{\mathbb{R}^{2d}}|f_{-}(x)|^{2}d\mu_{2d}(x)\bigg)\\
& = & D_{2d} \displaystyle\int_{\mathbb{R}^{2d}}|f(x)|^{2}d\mu_{2d}(x).
\end{eqnarray*}
\end{proof}
%%%%%%%%%%%%%%%%%%%%%%%%%%%%%%%%%%%%%%%%%%%%%%%%%%%%%%%%%%%%%%%%%%%%%%%%%%%%%%%%%%%%%%%%%%%%%%%%%%%%
%%%%%%%%%%%%%%%%%%%%%%%%%%%%%%%%%%%%%%%%%%%%%%%%%%%%%%%%%%%%%%%%%%%%%%%%%%%%%%%%%%%%%%%%%%%%%%%%%%%%%%%%%%%%%%%%%%%%%%%%%%%%%%%%%%%%%%%%%%%%%%%%%%%%%%%%%%%%%%%%%%%%
\section{Two sided Quaternionic Windowed Fourier transform (QWFT)}
 In this section, we present the multivariate continuous two sided quaternion windowed Fourier transform. We investigate several basic properties of the QWFT which are important for signal representation in signal processing. For more details on quaternion windowed Fourier transform, the reader can see \cite{article1, article2, 6, 15, windowed1}.
\begin{defn}
Let $g \in L^{2}(\mathbb{R}^{2d},\mathbb{H})\setminus{\{0\}}$, we denote by $\mathcal{G}_{g}$, the QWFT on $L^{2}(\mathbb{R}^{2d},\mathbb{H})$. The QWFT of $f \in L^{2}(\mathbb{R}^{2d},\mathbb{H})$ with respect to $g$ is defined by
\begin{equation}\label{qw}
\mathcal{G}_{g}(f)(x,w) = \displaystyle\int_{\mathbb{R}^{2d}} e^{-is.u} f(y)\overline{g(y-x)} e^{-jt.v} d\mu_{2d}(y),\ \ \ w=(u,v), y=(s,t).
\end{equation}
We say that $g$ is a (non-zero) quaternion window function.
\end{defn}
%%%%%%%%%%%%%%%%%%%%%%%%%%%%%%%%%%%%%%%%%%%%%%%%%%%%%%%%%%%%%%%%%%%%%%%%%%%%%%%%%%%%%%%%%%%%%%%%%%%%%%%%%%%%%%%%%%%%%%%%%%%%%%%%%%%%%%%%%%%%%%%%%%%%%%%%%%%%%%%%%%%%
\begin{prop}
\mbox{}\\
Let $g$ be a quaternion windowed function. For every $f \in L^{2}(\mathbb{R}^{2d},\mathbb{H})$ and for $(x,w) \in \mathbb{R}^{2d}\times\mathbb{R}^{2d}$, we have
\begin{equation}\label{wq}
\mathcal{G}_{g}(f)(x,w) = \mathcal{F}_{Q}(f\overline{T_{x}g})(w).
\end{equation}
\end{prop}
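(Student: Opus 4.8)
The plan is to prove the identity by directly unwinding both definitions and observing that they coincide term by term; this is a computation rather than a result requiring genuine machinery. First I would recall that the translation operator acts by $T_x g(y) = g(y-x)$, so that taking the quaternion conjugate gives $\overline{T_x g(y)} = \overline{g(y-x)}$. I would then introduce the auxiliary quaternion-valued function
\begin{equation*}
h(y) = f(y)\,\overline{T_x g(y)} = f(y)\,\overline{g(y-x)}, \qquad y \in \mathbb{R}^{2d},
\end{equation*}
which for fixed $x$ is an ordinary element of $L^{2}(\mathbb{R}^{2d},\mathbb{H})$ (it is a pointwise quaternion product of two $L^2$ functions, and lies in the domain on which $\mathcal{F}_Q$ is defined in the relevant sense).

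Next I would apply the definition \eqref{QFT} of the two sided quaternion Fourier transform to $h$, using the variable split $y = (s,t)$ with $s,t \in \mathbb{R}^{d}$ and writing $w=(u,v)$. This yields
\begin{equation*}
\mathcal{F}_{Q}(h)(w) = \int_{\mathbb{R}^{2d}} e^{-is.u}\, h(s,t)\, e^{-jt.v}\, d\mu_{2d}(y)
= \int_{\mathbb{R}^{2d}} e^{-is.u}\, f(y)\,\overline{g(y-x)}\, e^{-jt.v}\, d\mu_{2d}(y),
\end{equation*}
and the right-hand side is, by inspection, exactly the defining integral \eqref{qw} for $\mathcal{G}_{g}(f)(x,w)$. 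Comparing the two expressions establishes the claimed equality $\mathcal{G}_{g}(f)(x,w) = \mathcal{F}_{Q}(f\overline{T_x g})(w)$.

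The only point demanding care is the non-commutativity of quaternion multiplication: one must verify that the quaternion product $f(y)\overline{g(y-x)}$ occupies precisely the central slot between the left exponential factor $e^{-is.u}$ and the right exponential factor $e^{-jt.v}$. This is exactly how the two sided transform $\mathcal{F}_Q$ sandwiches its argument, and it is also how the windowed transform $\mathcal{G}_g$ is defined, so the two orderings match without any rearrangement. Thus there is no genuine obstacle beyond bookkeeping the left/right placement of the $i$- and $j$-exponentials, and the proof reduces to the direct substitution above.
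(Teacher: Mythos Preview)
Your proof is correct and is exactly the approach implicit in the paper: the paper states this proposition without proof, treating the identity as an immediate consequence of comparing the defining integrals \eqref{qw} and \eqref{QFT}, which is precisely the direct substitution you carry out. Your remark about the non-commutativity and the matching left/right placement of the exponentials is the only point worth noting, and you handle it correctly.
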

%%%%%%%%%%%%%%%%%%%%%%%%%%%%%%%%%%%%%%%%%%%%%%%%%%%%%%%%%%%%%%%%%%%%%%%%%%%%%%%%%%%%%%%%%%%%%%%%%%%%%%%%%%%%%%%%%%%%%%%%%%%%%%%%%%%%%%%%%%%%%%%%%%%%%%%%%%%%%%%%%%%%%%%%%%%%%%%%%%%%%%%%%%%%%%%%%%%%%%%%
\begin{prop} For every $f \in L^{2}(\mathbb{R}^{2d},\mathbb{H}),$ we denote by $f_{\lambda}$ the function defined by $f_{\lambda}(t) = f(\lambda t), \lambda>0$. We have that $f_{\lambda}$ belongs to  $L^{2}(\mathbb{R}^{2d},\mathbb{H})$ and $\mathcal{G}_{g}(f_{\lambda})$ is given by
\begin{equation}\label{dilate}
\mathcal{G}_{g}(f_{\lambda})(x,w) = \dfrac{1}{\lambda^{2d}} \mathcal{G}_{g_{\frac{1}{\lambda}}}(f)(\lambda x, \frac{w}{\lambda}).
\end{equation}
\end{prop}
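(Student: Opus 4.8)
The plan is to derive both assertions from a single scalar change of variables in the defining integral (\ref{qw}). First I would dispose of the membership claim: from $\|f_{\lambda}\|_{2,2d}^{2} = \int_{\mathbb{R}^{2d}} |f(\lambda y)|^{2}\, d\mu_{2d}(y)$, the substitution $z = \lambda y$ gives $d\mu_{2d}(y) = \lambda^{-2d}\, d\mu_{2d}(z)$, so $\|f_{\lambda}\|_{2,2d}^{2} = \lambda^{-2d}\|f\|_{2,2d}^{2} < \infty$, confirming $f_{\lambda} \in L^{2}(\mathbb{R}^{2d},\mathbb{H})$.

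For the dilation formula I would start from
\[
\mathcal{G}_{g}(f_{\lambda})(x,w) = \displaystyle\int_{\mathbb{R}^{2d}} e^{-is.u} f(\lambda y)\overline{g(y-x)} e^{-jt.v}\, d\mu_{2d}(y), \quad y=(s,t),\ w=(u,v),
\]
and perform the substitution $z = \lambda y$, written componentwise as $z = (s',t') = (\lambda s,\lambda t)$. Three transformations must be tracked simultaneously. The Jacobian yields the prefactor $\lambda^{-2d}$; the real dot products rescale as $s.u = s'.(u/\lambda)$ and $t.v = t'.(v/\lambda)$, converting the frequency into $w/\lambda$; the amplitude becomes $f(\lambda y) = f(z)$; and the window transforms via $g(y-x) = g\big((z-\lambda x)/\lambda\big) = g_{\frac{1}{\lambda}}(z-\lambda x)$, which simultaneously turns $g$ into its reciprocal dilate $g_{\frac{1}{\lambda}}$ and relocates the spatial argument to $\lambda x$.

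The observation that makes the quaternionic non-commutativity harmless here is that the substitution is a purely scalar rescaling acting only inside the real inner products $s.u$ and $t.v$; it never displaces the factor $e^{-is.u}$ from the left of the amplitude, nor $e^{-jt.v}$ from its right. Consequently, after substitution the integrand reassembles verbatim into the QWFT of $f$ taken with the window $g_{\frac{1}{\lambda}}$ and evaluated at the point $(\lambda x, w/\lambda)$; extracting the Jacobian factor $\lambda^{-2d}$ then gives exactly (\ref{dilate}).

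The main obstacle—indeed the only delicate point—is the bookkeeping of the two opposing scalings: one must check that the window is contracted by $1/\lambda$ rather than $\lambda$, and that the spatial and frequency variables transform in opposite directions, namely $x \mapsto \lambda x$ while $w \mapsto w/\lambda$. Both facts are forced by the single identity $g(z/\lambda - x) = g_{\frac{1}{\lambda}}(z - \lambda x)$, so once that is verified the conclusion follows with no estimates or limiting arguments.
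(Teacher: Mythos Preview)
Your proof is correct and follows essentially the same approach as the paper: both arguments reduce to the single scalar substitution $z=\lambda y$ in the defining integral, tracking the Jacobian $\lambda^{-2d}$, the rescaled phases $s.u=s'.(u/\lambda)$, $t.v=t'.(v/\lambda)$, and the window identity $g(z/\lambda-x)=g_{1/\lambda}(z-\lambda x)$. Your explicit verification of $f_{\lambda}\in L^{2}$ and your remark that the scalar substitution leaves the left/right quaternionic exponentials in place are welcome clarifications that the paper omits, but the underlying computation is the same.
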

%%%%%%%%%%%%%%%%%%%%%%%%%%%%%%%%%%%%%%%%%%%%%%%%%%%%%%%%%%%%%%%%%%%%%%%%%%%%%%%%%%%%%%%%%%%%%%%%%%%%%%%%%%%%%%%%%%%%%%%%%%%%%%%%%%%%%%%%%%%%%%%%%%%%%%%%%%%%%%%%%%%%%%%%%%%%%%%%%%%%%%%%%%%%%%%%%%%%%%%%
\begin{proof} We have
\begin{eqnarray*}
\mathcal{G}_{g}(f_{\lambda})(x,w) & = & \displaystyle\int_{\mathbb{R}^{2d}}e^{-is.u} f_{\lambda}(y) g(y-x) e^{-jt.v} d\mu_{2d}(y)\\
 & = & \displaystyle\int_{\mathbb{R}^{2d}}e^{-is.u} f(\lambda y) g(y-x) e^{-jt.v} d\mu_{2d}(y)\\
  & = & \dfrac{1}{\lambda^{2d}}\displaystyle\int_{\mathbb{R}^{2d}}e^{-i u.\frac{k}{\lambda}} f(z) g\bigg( \frac{z-\lambda x}{\lambda}\bigg) e^{-j v.\frac{l}{\lambda}} d\mu_{2d}(z)\\
  & = & \dfrac{1}{\lambda^{2d}} \mathcal{G}_{g_{\frac{1}{\lambda}}}(f)(\lambda x, \frac{w}{\lambda}),
\end{eqnarray*}
where $\lambda x = (\lambda x_{1},....,\lambda x_{2d})$.
\end{proof}
%%%%%%%%%%%%%%%%%%%%%%%%%%%%%%%%%%%%%%%%%%%%%%%%%%%%%%%%%%%%%%%%%%%%%%%%%%%%%%%%%%%%%%%%%%%%%%%%%%%%%%%%%%%%%%%%%%%%%%%%%%%%%%%%%%%%%%%%%%%%%%%%%%%%%%%%%%%%%%%%%%%%
\begin{prop} Let $f$, $g$ $\in L^{2}(\mathbb{R}^{2d},\mathbb{H})$. The Quaternion window Fourier transform $\mathcal{G}_{g}(f)$ is uniformly continuous and bounded on the time-frequency plane $\mathbb{R}^{2d}\times\mathbb{R}^{2d}$ and satisfies
\begin{equation}\label{1}
\|\mathcal{G}_{g}(f)\|_{\infty,4d} \leq \|f\|_{2,2d}\|g\|_{2,2d} .
\end{equation}
\end{prop}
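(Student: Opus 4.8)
The plan is to split the statement into the norm bound (\ref{1}) --- which also gives boundedness --- and the uniform continuity, and to prove each using only the identity $|pq|=|p||q|$, the Cauchy--Schwarz inequality (\ref{cs}), and the $L^2$-continuity of translation. For the bound I would start from the definition (\ref{qw}): since $|e^{-is.u}|=|e^{-jt.v}|=1$ and the modulus is multiplicative, moving the modulus inside the integral gives $|\mathcal{G}_{g}(f)(x,w)|\le \int_{\mathbb{R}^{2d}}|f(y)|\,|g(y-x)|\,d\mu_{2d}(y)$. Applying Cauchy--Schwarz (as in (\ref{cs}), here to the real moduli $|f|$ and $|T_xg|$) together with the translation invariance of $\mu_{2d}$ yields $|\mathcal{G}_{g}(f)(x,w)|\le \|f\|_{2,2d}\,\|g\|_{2,2d}$. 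As the right-hand side is independent of $(x,w)$, this is exactly (\ref{1}) and shows $\mathcal{G}_{g}(f)\in L^{\infty}(\mathbb{R}^{2d}\times\mathbb{R}^{2d},\mathbb{H})$.

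For continuity I would estimate the increment in the window variable and in the frequency variable separately. In the window variable the two exponential factors are untouched, so $|\mathcal{G}_{g}(f)(x+h,w)-\mathcal{G}_{g}(f)(x,w)|\le \|f\|_{2,2d}\,\|T_{x+h}g-T_{x}g\|_{2,2d}$; by translation invariance of the norm the last factor equals $\|T_{h}g-g\|_{2,2d}$, which is independent of $x$ and of $w$ and tends to $0$ as $h\to 0$ by continuity of translation in $L^{2}(\mathbb{R}^{2d},\mathbb{H})$. Hence the modulus of continuity in $x$ is already uniform in all variables.

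The frequency variable is the delicate point and the main obstacle. Writing $h_x(y)=f(y)\overline{g(y-x)}$ and telescoping, $e^{-is.u'}h_x e^{-jt.v'}-e^{-is.u}h_x e^{-jt.v}=(e^{-is.u'}-e^{-is.u})h_x e^{-jt.v'}+e^{-is.u}h_x(e^{-jt.v'}-e^{-jt.v})$, so that $|\mathcal{G}_{g}(f)(x,w')-\mathcal{G}_{g}(f)(x,w)|\le \int\big(|e^{-is.(u'-u)}-1|+|e^{-jt.(v'-v)}-1|\big)|f(y)||g(y-x)|\,d\mu_{2d}(y)$, using $|e^{-is.u}|=1$. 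Pointwise $|e^{-is.(u'-u)}-1|\le \min(2,|s|\,|u'-u|)$, and the real task is to make the integral small \emph{uniformly in} $x$. I would split the $s$-integration over $\{|s|\le R\}$ and $\{|s|>R\}$: on the first set bound the factor by $R\,|u'-u|$ and use Cauchy--Schwarz to get $\le R\,|u'-u|\,\|f\|_{2,2d}\|g\|_{2,2d}$, while on the tail bound it by $2$ and obtain $\le 2\big(\int_{|s|>R}|f(y)|^{2}\,d\mu_{2d}(y)\big)^{1/2}\|g\|_{2,2d}$, which is governed by the $L^{2}$-tail of $f$ and is independent of $x$. Choosing $R$ large to kill the tail and then $|u'-u|,|v'-v|$ small makes the increment uniformly small; the $v$-term is treated identically via the tail of $f$ in the $t$-variable.

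Finally I would assemble the two estimates through $|\mathcal{G}_{g}(f)(x',w')-\mathcal{G}_{g}(f)(x,w)|\le |\mathcal{G}_{g}(f)(x',w')-\mathcal{G}_{g}(f)(x,w')|+|\mathcal{G}_{g}(f)(x,w')-\mathcal{G}_{g}(f)(x,w)|$, both terms of which are bounded by moduli of continuity uniform in all variables; this gives uniform continuity on $\mathbb{R}^{2d}\times\mathbb{R}^{2d}$. As an alternative one could invoke (\ref{wq}) to view $\mathcal{G}_{g}(f)(x,\cdot)=\mathcal{F}_{Q}(f\overline{T_xg})$ with $f\overline{T_xg}\in L^{1}$ by Cauchy--Schwarz, and combine continuity of the QFT of $L^{1}$ data with a density argument; but the direct tail estimate above is what supplies the required uniformity in $x$ most transparently.
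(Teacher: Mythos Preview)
Your argument is correct. The bound (\ref{1}) you obtain by Cauchy--Schwarz is exactly the paper's proof of that part.

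For uniform continuity, however, you take a genuinely different route. The paper writes $f=f_{1}+jf_{2}$, $g=g_{1}+jg_{2}$ with $f_{l},g_{l}$ complex-valued, and uses the identity $e^{-is.u}e^{-jt.v}=e^{-i(s.u+t.v)}\frac{1-k}{2}+e^{-i(s.u-t.v)}\frac{1+k}{2}$ to express $\mathcal{G}_{g}(f)$ as a finite $\mathbb{H}$-linear combination of \emph{classical} complex windowed Fourier transforms $\mathcal{G}^{c}$; it then simply quotes Gr\"ochenig's result that each $\mathcal{G}^{c}_{g_{l}}(f_{m})$ is uniformly continuous on $\mathbb{R}^{2d}\times\mathbb{R}^{2d}$ and concludes. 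Your approach is instead a direct, self-contained estimate: continuity of translation in $L^{2}$ handles the $x$-increment uniformly, and for the $w$-increment you telescope the two exponentials and use the tail decay of $\|f\chi_{\{|s|>R\}}\|_{2,2d}$ (respectively in the $t$-variable) to obtain a modulus of continuity independent of $x$. What you do is essentially reproving the Gr\"ochenig lemma in the quaternionic setting rather than reducing to it; your argument is longer but avoids any external reference and makes the source of the uniformity in $x$ explicit, while the paper's decomposition trades that transparency for brevity.
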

%%%%%%%%%%%%%%%%%%%%%%%%%%%%%%%%%%%%%%%%%%%%%%%%%%%%%%%%%%%%%%%%%%%%%%%%%%%%%%%%%%%%%%%%%%%%%%%%%%%%%%%%%%%%%%%%%%%%%%%%%%%%%%%%%%%%%%%%%%%%%%%%%%%%%%%%%%%%%%%%%%%%%%%%%%%%%%%%%%%%%%%%%%%%
%%%%%%%%%%%%%%%%%%%%%%%%%%%%%%%%%%%%%%%%%%%%%%%%%%%%%%%%%%%%%%%%%%%%%%%%%%%%%%%%%%%%%%%%%%%%%%%%%%%%%%%%%%%%%%%%%%%%%%%%%%%%%%%%%%%%%%%%%%%%%%%%%%%%%%%%%%%%%%%%%%%%%%%%%%%%%%%%%%%%%%%%%%%%
\begin{proof}
\begin{enumerate}
\item
We see that any function $f$ from $\mathbb{R}^{2d}$ to $\mathbb{H}$ can be expressed as $f(x) = f_{1}(x) + j f_{2}(x)$ where $f_{1}$, $f_{2}$ are both complex valued functions.
 Let $f(x) = f_{1}(x) + j f_{2}(x)$, $g(x) = g_{1}(x) + j g_{2}(x)$ then
\begin{equation*}
f(x)\overline{g(x)} = (f_{1}(x) + j f_{2}(x))(\overline{g_{1}(x)} - \overline{g_{2}(x)}j) =
(f_{1}(x)\overline{g_{1}(x)} + \overline{f_{2}(x)}g_{2}(x)) + (\overline{f_{2}(x)}g_{1}(x) - f_{1}(x)\overline{g_{2}(x)})j
\end{equation*} then we have
\begin{eqnarray*}
\mathcal{G}_{g}(f)(x,w) = (\mathcal{G}_{g_{1}}(f_{1})(x,w) + \mathcal{G}_{\overline{g_{2}}}(\overline{f_{2}})(x,w)) +
(\mathcal{G}_{\overline{g_{1}}}(\overline{f_{2}})(x,w) - \mathcal{G}_{g_{2}}(f_{1})(x,w) )j.
\end{eqnarray*}
On the other hand, from \cite[eq-21]{fields} we have
\begin{equation*}
e^{-is.u}e^{-jt.v} = e^{-i(s.u+t.v)}\dfrac{1-k}{2} + e^{-i(s.u-t.v)}\dfrac{1+k}{2};
\end{equation*}
then
\begin{eqnarray*}
\mathcal{G}_{g}(f)(x,w) & = & (\mathcal{G}^{c}_{g_{1}}(f_{1})(x,w) + \mathcal{G}^{c}_{\overline{g_{2}}}(\overline{f_{2}})(x,w))\dfrac{1-k}{2} +
(\mathcal{G}^{c}_{\overline{g_{1}}}(\overline{f_{2}})(x,w) - \mathcal{G}^{c}_{g_{2}}(f_{1})(x,w) )\dfrac{1-k}{2}j\\
& + & (\mathcal{G}^{c}_{g_{1}}(f_{1})(x,(u,-v)) + \mathcal{G}^{c}_{\overline{g_{2}}}(\overline{f_{2}})(x,(u,-v)))\dfrac{1+k}{2}\\
& + &
(\mathcal{G}^{c}_{\overline{g_{1}}}(\overline{f_{2}})(x,(u,-v)) - \mathcal{G}^{c}_{g_{2}}(f_{1})(x,(u,-v)) )\dfrac{1+k}{2}j,
\end{eqnarray*}
where $\mathcal{G}^{c}_{g}(f)$ is a complex window function defined by
\begin{equation*}
\mathcal{G}^{c}_{g}(f)(x,w) = \displaystyle\int_{\mathbb{R}^{2d}} f(y) \overline{g(y-x)}e^{-iw.y} d\mu_{2d}(y).
\end{equation*}
From \cite[p.39-lemma 3.1.1]{grochening} we have $\mathcal{G}^{c}_{g}(f)$ is uniformly continuous on $\mathbb{R}^{2d}\times\mathbb{R}^{2d}$ , and we now that the sum of a finite number of uniformly continuous functions is a uniformly continuous function, we deduce that $\mathcal{G}_{g}(f)$ is uniformly continuous.
\item
Let $f$ , $g$ $\in L^{2}(\mathbb{R}^{2d},\mathbb{H})$ and $g \neq 0$ . According to Cauchy Schwartz inequality and for every $(x,w) \in \mathbb{R}^{2d}\times\mathbb{R}^{2d}$
\begin{eqnarray*}
|\mathcal{G}_{g}(f)(x,w)| & = & \bigg|\displaystyle\int_{\mathbb{R}^{2d}} e^{-is.u} f(y) \overline{g(y-x)} e^{-jt.v} d\mu_{2d}(y)\bigg| \leq \displaystyle\int_{\mathbb{R}^{2d}} |f(y)| |g(y-x)| d\mu_{2d}(y)\\
& \leq & \|f\|_{2,2d} \|g\|_{2,2d}
\end{eqnarray*}
consequently
\begin{equation*}
\|\mathcal{G}_{g}(f)\|_{\infty,4d} \leq \|f\|_{2,2d} \|g\|_{2,2d}.
\end{equation*}
\end{enumerate}
\end{proof}

%%%%%%%%%%%%%%%%%%%%%%%%%%%%%%%%%%%%%%%%%%%%%%%%%%%%%%%%%%%%%%%%%%%%%%%%%%%%%%%%%%%%%%%%%%%%%%%%%%%%%%%%%%%%%%%%%%%%%%%%%%%%%%%%%%%%%%%%%%%%%%%%%%%%%%%%%%%%%%%%%%%%%%%%%%%%%%%%%%%%%%%%%%%%%%%%%%%%%%%%%%%%%%%%%%%%%%%%%%%%%%%%%%%%%%%%%%%%%%%%%%%%%%%%%%%
\begin{thm}(Lieb inequality)
\mbox{}\\
Let $g$ be a non zero quaternion window function. For every $f \in L^{2}(\mathbb{R}^{2d},\mathbb{H})$ and $p \geq 2$ we have
\begin{equation}
\|\mathcal{G}_{g}(f)\|_{p,4d} \leq C_{p,q} \|f\|_{2,2d} \|g\|_{2,2d} ,\hspace{2 cm} \frac{1}{p} + \frac{1}{q} = 1
\end{equation}
when $C_{p,q} = \bigg(\dfrac{4}{p}\bigg)^{\frac{d}{p}}  \bigg(\dfrac{1}{q}\bigg)^{\frac{d}{q}}$.
\end{thm}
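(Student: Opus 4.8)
The plan is to reduce each ``frequency slice'' of $\mathcal{G}_{g}(f)$ to a quaternion Fourier transform, apply a Hausdorff--Young inequality there, and then control the remaining integral in $x$ by a convolution estimate. By the identity (\ref{wq}) we have, for each fixed $x$, that $\mathcal{G}_{g}(f)(x,\cdot)=\mathcal{F}_{Q}(f\,\overline{T_{x}g})$, so a pointwise-in-$x$ Hausdorff--Young inequality for $\mathcal{F}_{Q}$ is the natural first tool. For $p\geq 2$ with conjugate exponent $q$ (so $1\leq q\leq 2$), I would use the sharp (Beckner/Babenko) Hausdorff--Young inequality for $\mathcal{F}_{Q}$ in the form $\|\mathcal{F}_{Q}(h)\|_{p,2d}\leq A_{q}\,\|h\|_{q,2d}$, applied to $h=f\,\overline{T_{x}g}$. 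Since $|h(y)|=|f(y)|\,|g(y-x)|$, this gives $\|\mathcal{G}_{g}(f)(x,\cdot)\|_{p,2d}\leq A_{q}\big(\int_{\R^{2d}}|f(y)|^{q}|g(y-x)|^{q}\,d\mu_{2d}(y)\big)^{1/q}$.

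Next I would raise this to the power $p$ and integrate in $x$, using Fubini to write $\|\mathcal{G}_{g}(f)\|_{p,4d}^{p}=\int_{\R^{2d}}\|\mathcal{G}_{g}(f)(x,\cdot)\|_{p,2d}^{p}\,d\mu_{2d}(x)$. Setting $F=|f|^{q}$ and $G=|g|^{q}$ (both nonnegative), the inner integral becomes the correlation $\phi(x)=\int_{\R^{2d}} F(y)G(y-x)\,d\mu_{2d}(y)$, and the whole expression equals $A_{q}^{p}\,\|\phi\|_{p/q,2d}^{p/q}$. Because $p\geq 2$ we have $p/q\geq 1$, so Young's convolution inequality applies: with exponents $a=b=2/q$ and target $r=p/q$ one checks $\tfrac1a+\tfrac1b=q=1+\tfrac1r$, hence $\|\phi\|_{p/q,2d}\leq B\,\|F\|_{2/q,2d}\,\|G\|_{2/q,2d}=B\,\|f\|_{2,2d}^{q}\,\|g\|_{2,2d}^{q}$, where $B$ is the sharp Young constant. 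Combining the two estimates yields $\|\mathcal{G}_{g}(f)\|_{p,4d}\leq A_{q}\,B^{1/q}\,\|f\|_{2,2d}\,\|g\|_{2,2d}$, after which it remains to verify that $A_{q}B^{1/q}$ collapses to the asserted value $C_{p,q}=(4/p)^{d/p}(1/q)^{d/q}$.

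I expect two delicate points. The first, and main, obstacle is justifying the sharp Hausdorff--Young inequality for the two-sided $\mathcal{F}_{Q}$: the excerpt only records the plane-split reduction of $\mathcal{F}_{Q}$ to the complex transform $\mathcal{F}_{c}$ together with the $L^{2}$ identity $|\mathcal{F}_{Q}(f)|^{2}=|\mathcal{F}_{Q}(f_{+})|^{2}+|\mathcal{F}_{Q}(f_{-})|^{2}$, and this quadratic splitting does not pass directly to $L^{p}$ for $p\neq 2$. I would therefore either invoke a quaternionic Hausdorff--Young inequality obtained from the $f=f_{+}+f_{-}$ decomposition (bounding the two complex components by the classical sharp inequality and recombining by Minkowski/convexity), or else carry the best available constant $A_{q}$ through the argument. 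The second point is purely bookkeeping: the sharp constants $A_{q}$ and $B$ carry the dimension $2d$ and the $(2\pi)^{-d}$ factors hidden in the normalized measure $\mu_{2d}$, and one must check that these combine to exactly $(4/p)^{d/p}(1/q)^{d/q}$. As a sanity check, at $p=q=2$ every sharp constant equals $1$ and $C_{2,2}=2^{d/2}\,2^{-d/2}=1$, matching the Plancherel value $\|\mathcal{G}_{g}(f)\|_{2,4d}=\|f\|_{2,2d}\|g\|_{2,2d}$, which also confirms the exponent bookkeeping above.
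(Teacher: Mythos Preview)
Your approach is essentially the paper's: apply Hausdorff--Young in the frequency variable to $\mathcal{F}_{Q}(f\,\overline{T_{x}g})$, then control the $L^{p/q}$-norm of the resulting convolution $|f|^{q}*|\check g|^{q}$ by the sharp Young inequality with exponents $s=2/q$, $t=p/q$.

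The only substantive difference is in the Hausdorff--Young step. The paper uses the \emph{non-sharp} Hausdorff--Young inequality for $\mathcal{F}_{Q}$ with constant $1$ (which follows immediately by Riesz--Thorin interpolation between the trivial $L^{1}\to L^{\infty}$ bound and Plancherel, both valid for $\mathcal{F}_{Q}$ with constant $1$ in this normalization). All of the stated constant $C_{p,q}=(4/p)^{d/p}(1/q)^{d/q}$ then comes from the sharp Young constant alone. So your ``main obstacle'' --- establishing a sharp Beckner--Babenko bound for the two-sided quaternion Fourier transform --- is not needed for this theorem. In fact, if you did succeed in proving it, your combined constant $A_{q}B^{1/q}$ would be strictly smaller than $C_{p,q}$ for $p>2$, and you would not recover the paper's stated value but rather improve on it. Your $p=2$ sanity check cannot detect this, since every sharp constant collapses to $1$ there.
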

%%%%%%%%%%%%%%%%%%%%%%%%%%%%%%%%%%%%%%%%%%%%%%%%%%%%%%%%%%%%%%%%%%%%%%%%%%%%%%%%%%%%%%%%%%%%%%%%%%%%%%%%%%%%%%%%%%%%%%%%%%%%%%%%%%%%%%%%%%%%%%%%%%%%%%%%%%%%%%%%%%%%
\begin{proof}
According to Cauchy-Schwartz inequality, $f\overline{T_{x}g} \in L^{1}(\mathbb{R}^{2d},\mathbb{H})$. By Plancherel formula we get $\mathcal{G}_{g}(f) \in L^{2}(\mathbb{R}^{2d}\times\mathbb{R}^{2d},\mathbb{H})$ . In particular, for almost every $x \in \mathbb{R}^{2d}$, $\mathcal{G}_{g}(f)(x,.) = \mathcal{F}_{Q}\{f\overline{T_{x}g}\} \in L^{2}(\mathbb{R}^{2d},\mathbb{H})$ and consequently we deduce that $f\overline{T_{x}g} \in L^{1}(\mathbb{R}^{2d},\mathbb{H})\cap L^{2}(\mathbb{R}^{2d},\mathbb{H})$. This implies that $f\overline{T_{x}g} \in L^{q}(\mathbb{R}^{2d},\mathbb{H})$; where $\frac{1}{p} + \frac{1}{q} = 1$ and $p \geq 2$.\\
Using the Hausdorff-Young theorem, we get
\begin{eqnarray*}
\bigg(\displaystyle\int_{\mathbb{R}^{2d}} |\mathcal{G}_{g}(f)(x,w)|^{p} d\mu_{2d}(w)\bigg)^{\frac{1}{p}} & = & \bigg(\displaystyle\int_{\mathbb{R}^{2d}} |\mathcal{F}_{Q}\{f\overline{T_{x}g})\}(w)|^{p} d\mu_{2d}(w)\bigg)^{\frac{1}{p}}\\
 & \leq & \bigg(\displaystyle\int_{\mathbb{R}^{2d}} |(f\overline{T_{x}g}(y)|^{q} d\mu_{2d}(y)\bigg)^{\frac{1}{q}}\\
& = & \bigg(\displaystyle\int_{\mathbb{R}^{2d}} |f(y)|^{q}|g(y-x)|^{q} d\mu_{2d}(y)\bigg)^{\frac{1}{q}}  = (|f|^{q}*|\check{g}|^{q}(x))^{\frac{1}{q}},
\end{eqnarray*}
where $\check{g}(x) = g(-x)$ then,
\begin{eqnarray*}
\|\mathcal{G}_{g}(f)\|_{p,4d} & = & \bigg(\displaystyle\int_{\mathbb{R}^{2d}}\bigg(\displaystyle\int_{\mathbb{R}^{2d}} |\mathcal{G}_{g}(f)(x,w)|^{p} d\mu_{2d}(x)\bigg)d\mu_{2d}(w)\bigg)^{\frac{1}{p}} \leq \bigg(\displaystyle\int_{\mathbb{R}^{2d}} (|f|^{q}*|\check{g}|^{q}(x))^{\frac{p}{q}} d\mu_{2d}(x)\bigg)^{\frac{1}{p}}\\
& = & \||f|^{q}*|\check{g}|^{q}\|_{\frac{p}{q},2d}^{\frac{1}{q}}.
\end{eqnarray*}
If $s = \dfrac{2}{q}$ , $t = \dfrac{p}{q}$ and $\dfrac{1}{s} + \dfrac{1}{s'} = 1$ , $\dfrac{1}{t} + \dfrac{1}{t'} = 1$ then $\dfrac{1}{s} + \dfrac{1}{s} = 1 + \dfrac{1}{t}$ and hence as $|f|^{q} , |\check{g}|^{q} \in L^{s}(\mathbb{R}^{2d} , \mathbb{H})$ and with using young inequality we deduce
\begin{equation*}
\| |f|^{q}*|\check{g}|^{q}\|_{t,2d} \leq A_{s}^{4d} A_{t^{'}}^{2d} \| |f|^{q} \|_{s,2d} \||\check{g}|^{q} \|_{s,2d}
\end{equation*}
where $A_{p} = \bigg(\dfrac{p^{\frac{1}{p}}}{q^{\frac{1}{q}}}\bigg)^{\frac{1}{2}}$.\\
However
\begin{equation*}
\| |f|^{q}\|_{s,2d} = \bigg(\displaystyle\int_{\mathbb{R}^{2d}} |f(x)|^{q\frac{2}{q}} d\mu_{2d}(x)\bigg)^{\frac{q}{2}} = \|f\|_{2,2d}^{q}
\end{equation*}
and
\begin{equation*}
\| |\check{g}|^{q} \|_{s,2d} = \bigg( \displaystyle\int_{\mathbb{R}^{2d}} |g(y-x)|^{2} d\mu_{2d}(y)\bigg)^{\frac{q}{2}} = \|g\|_{2,2d}^{q}
\end{equation*}
then
\begin{equation*}
\| |f|^{q}*|\check{g}|^{q}\|_{t,2d}^{\frac{1}{q}} \leq \bigg( A_{s}^{4d} A_{t'}^{2d} \| |f|^{q} \|_{s,2d} \||\check{g}|^{q}\|_{s,2d}\bigg)^{\frac{1}{q}}.
\end{equation*}
We get
\begin{equation*}
\|\mathcal{G}_{g}(f)\|_{p,4d} \leq (A_{s}^{4d} A_{t'}^{2d} \|f\|_{2,2d}^{q} \|g\|_{2,2d}^{q} )^{\frac{1}{q}} = A_{s}^{\frac{4d}{q}} A_{t'}^{\frac{2d}{q}} \|f\|_{2,2d} \|g\|_{2,2d} = C_{p,q} \|f\|_{2,2d} \|g\|_{2,2d}
\end{equation*}
where $C_{p,q} = \bigg(\dfrac{4}{p}\bigg)^{\frac{d}{p}} \bigg(\dfrac{1}{q}\bigg)^{\frac{d}{q}}$ .
\end{proof}

%%%%%%%%%%%%%%%%%%%%%%%%%%%%%%%%%%%%%%%%%%%%%%%%%%%%%%%%%%%%%%%%%%%%%%%%%%%%%%%%%%%%%%%%%%%%%%%%%%%%%%%%%%%%%%%%%%%%%%%%%%%%%%%%%%%%%%%%%%%%%%%%%%%%%%%%%%%%%%%%%%%%%%%%%%%%%%%%%%%%%%%%%%%%%%%%%%%%%%%%%%%%%%%%%%%%%%%%%%%%%%%%%%%%%%%%%%%%%%%%%%%%%%%%%%%%%%%%%%%%%%%%%%%%%%%%%%%%%%%%%%%%%%%%%%%%%%%%%%%%%%%
\begin{thm}Let $g_{1}$, $g_{2}$ be two non zero quaternion window functions. Then we have
\begin{enumerate}
\item (Parseval's theorem for $\mathcal{G}_{g_{1}}$ and $\mathcal{G}_{g_{2}}$) For all functions $f_{1}$ and $f_{2}$ in $\in L^{2}(\mathbb{R}^{2d},\mathbb{H})$,
\begin{equation}\label{orthogonalite}
\langle\mathcal{G}_{g_{1}}(f_{1}),\mathcal{G}_{g_{2}}(f_{2})\rangle_{2,4d} =  \langle f_{1} (\overline{g_{1}},\overline{g_{2}})_{2,2d},f_{2}\rangle_{2,2d}.
\end{equation}
\item (Plancherel's theorem for $\mathcal{G}_{g_{1}}$) For every function $f$ in $\in L^{2}(\mathbb{R}^{2d},\mathbb{H})$,
 \begin{equation}\label{orth}
\|\mathcal{G}_{g_{1}}(f)\|_{2,4d} = \|f\|_{2,2d}\|g_{1}\|_{2,2d}.
\end{equation}
\end{enumerate}
In particular, $\mathcal{G}_{g}(f)$ is a linear bounded operator from $L^{2}(\mathbb{R}^{2d},\mathbb{H})$ into $L^{2}(\mathbb{R}^{2d}\times\mathbb{R}^{2d},\mathbb{H})$.\\
 Moreover, if $\|g\|_{2,2d} = 1$, it is isometric.

\end{thm}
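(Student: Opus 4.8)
The plan is to reduce everything to the quaternion Fourier transform via the identity $\mathcal{G}_{g}(f)(x,w)=\mathcal{F}_{Q}(f\overline{T_{x}g})(w)$ established in \eqref{wq}, and then to invoke Parseval's formula \eqref{plan1} for $\mathcal{F}_{Q}$ in the frequency variable $w$, holding $x$ fixed. Since $\langle\cdot,\cdot\rangle_{2,4d}$ on $\mathbb{R}^{2d}\times\mathbb{R}^{2d}$ is the scalar part of a double integral over $(x,w)$, I would first write
\[
\langle\mathcal{G}_{g_{1}}(f_{1}),\mathcal{G}_{g_{2}}(f_{2})\rangle_{2,4d} = Sc\bigg(\int_{\mathbb{R}^{2d}}\int_{\mathbb{R}^{2d}} \mathcal{F}_{Q}(f_{1}\overline{T_{x}g_{1}})(w)\,\overline{\mathcal{F}_{Q}(f_{2}\overline{T_{x}g_{2}})(w)}\, d\mu_{2d}(w)\, d\mu_{2d}(x)\bigg).
\]
Because $Sc$ and the outer $x$-integration are real-linear and hence commute, the inner $w$-integral can be replaced by $\langle\mathcal{F}_{Q}(f_{1}\overline{T_{x}g_{1}}),\mathcal{F}_{Q}(f_{2}\overline{T_{x}g_{2}})\rangle_{2,2d}$, to which \eqref{plan1} applies directly, yielding $\langle f_{1}\overline{T_{x}g_{1}},f_{2}\overline{T_{x}g_{2}}\rangle_{2,2d}$ for each fixed $x$.

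Next I would expand this and reassemble the $x$-integral. Using $\overline{T_{x}g_{2}}(y)=\overline{g_{2}(y-x)}$ and the anti-involution $\overline{pq}=\overline{q}\,\overline{p}$, the integrand becomes $f_{1}(y)\overline{g_{1}(y-x)}\,g_{2}(y-x)\overline{f_{2}(y)}$, so that
\[
\langle\mathcal{G}_{g_{1}}(f_{1}),\mathcal{G}_{g_{2}}(f_{2})\rangle_{2,4d} = Sc\bigg(\int_{\mathbb{R}^{2d}}\int_{\mathbb{R}^{2d}} f_{1}(y)\overline{g_{1}(y-x)}\,g_{2}(y-x)\overline{f_{2}(y)}\, d\mu_{2d}(y)\, d\mu_{2d}(x)\bigg).
\]
I would then apply Fubini's theorem (its hypotheses being supplied by $f_{1},f_{2},g_{1},g_{2}\in L^{2}$ together with the Cauchy--Schwarz inequality \eqref{cs}), pull the $x$-independent factors $f_{1}(y)$ and $\overline{f_{2}(y)}$ out of the inner integral, and make the change of variables $z=y-x$, under which $d\mu_{2d}(x)=d\mu_{2d}(z)$. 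The inner integral then collapses to the \emph{constant} quaternion $\int_{\mathbb{R}^{2d}}\overline{g_{1}(z)}\,g_{2}(z)\,d\mu_{2d}(z)=(\overline{g_{1}},\overline{g_{2}})_{2,2d}$, leaving exactly $Sc\big(\int_{\mathbb{R}^{2d}} f_{1}(y)(\overline{g_{1}},\overline{g_{2}})_{2,2d}\overline{f_{2}(y)}\,d\mu_{2d}(y)\big)=\langle f_{1}(\overline{g_{1}},\overline{g_{2}})_{2,2d},f_{2}\rangle_{2,2d}$, which is the asserted identity \eqref{orthogonalite}.

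The Plancherel identity \eqref{orth} is then an immediate specialization: taking $f_{1}=f_{2}=f$ and $g_{2}=g_{1}$ in \eqref{orthogonalite}, the constant $(\overline{g_{1}},\overline{g_{1}})_{2,2d}=\int_{\mathbb{R}^{2d}}|g_{1}(x)|^{2}d\mu_{2d}(x)=\|g_{1}\|_{2,2d}^{2}$ is a real scalar, so it factors out to give $\langle\mathcal{G}_{g_{1}}(f),\mathcal{G}_{g_{1}}(f)\rangle_{2,4d}=\|g_{1}\|_{2,2d}^{2}\langle f,f\rangle_{2,2d}=\|g_{1}\|_{2,2d}^{2}\|f\|_{2,2d}^{2}$; since the left side equals $\|\mathcal{G}_{g_{1}}(f)\|_{2,4d}^{2}$, taking square roots yields \eqref{orth}. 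Boundedness of $\mathcal{G}_{g}$ from $L^{2}(\mathbb{R}^{2d},\mathbb{H})$ into $L^{2}(\mathbb{R}^{2d}\times\mathbb{R}^{2d},\mathbb{H})$ follows at once with operator norm $\|g\|_{2,2d}$, and when $\|g\|_{2,2d}=1$ the map is isometric.

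The main obstacle I anticipate is purely the bookkeeping forced by non-commutativity: the window constant $(\overline{g_{1}},\overline{g_{2}})_{2,2d}$ emerges sandwiched between $f_{1}(y)$ and $\overline{f_{2}(y)}$ and must remain in that position, since quaternions cannot be freely commuted. Fortunately the target expression carries it in precisely the same position, so no reordering (nor the cyclicity $Sc(qrs)=Sc(rsq)$) is required. The only genuine analytic points are the justification of Fubini and of interchanging $Sc$ with the $x$-integration, both of which reduce to the $L^{2}$-integrability guaranteed by \eqref{cs}.
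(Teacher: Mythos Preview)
Your strategy coincides with the paper's core computation: reduce to the QFT via $\mathcal{G}_g(f)(x,\cdot)=\mathcal{F}_Q(f\,\overline{T_xg})$, apply Parseval \eqref{plan1} in the $w$-variable, then integrate in $x$ and substitute $z=y-x$ to produce the sandwiched constant $(\overline{g_1},\overline{g_2})_{2,2d}$. Your handling of the noncommutative ordering is correct, and deriving \eqref{orth} by specializing \eqref{orthogonalite} is fine.

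There is, however, a gap at the point where you invoke \eqref{plan1}. Parseval for $\mathcal{F}_Q$ requires $f_i\,\overline{T_xg_i}\in L^2(\mathbb{R}^{2d},\mathbb{H})$, but for arbitrary $f_i,g_i\in L^2$ the Cauchy--Schwarz inequality \eqref{cs} only yields $f_i\,\overline{T_xg_i}\in L^1$; you never verify the $L^2$ membership. The paper deals with this by first restricting to windows $g_1,g_2\in L^1\cap L^\infty$, where $|f_i\,\overline{T_xg_i}|\le\|g_i\|_\infty|f_i|\in L^2$ is immediate, establishing \eqref{orth} and \eqref{orthogonalite} on that dense class, and then extending to general $g\in L^2$ by showing that the two bounded linear forms $g\mapsto\langle\mathcal{G}_{g_1}(f_1),\mathcal{G}_{g}(f_2)\rangle_{2,4d}$ and $g\mapsto\langle f_1(\overline{g_1},\overline{g})_{2,2d},f_2\rangle_{2,2d}$ agree on the dense subspace and hence everywhere. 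Your argument can be patched more directly, without density: Tonelli applied to $\int_{\mathbb{R}^{2d}}\int_{\mathbb{R}^{2d}}|f_i(y)|^2|g_i(y-x)|^2\,d\mu_{2d}(y)\,d\mu_{2d}(x)=\|f_i\|_{2,2d}^2\|g_i\|_{2,2d}^2<\infty$ shows that $f_i\,\overline{T_xg_i}\in L^2(\mathbb{R}^{2d},\mathbb{H})$ for almost every $x$, which is enough both for the Parseval step and for the Fubini justification you need afterwards.
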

%%%%%%%%%%%%%%%%%%%%%%%%%%%%%%%%%%%%%%%%%%%%%%%%%%%%%%%%%%%%%%%%%%%%%%%%%%%%%%%%%%%%%%%%%%%%%%%%%%%%%%%%%%%%%%%%%%%%%%%%%%%%%%%%%%%%%%%%%%%%%%%%%%%%%%%%%%%%%%%%%%%%
\begin{proof}
We assume that $g_{1}$ , $g_{2}$ $\in L^{1}(\mathbb{R}^{2d},\mathbb{H})\cap L^{\infty}(\mathbb{R}^{2d},\mathbb{H})$, according to interpolation, $g_{1}$ , $g_{2}$ $\in L^{2}(\mathbb{R}^{2d},\mathbb{H})$. Then, for every $x \in \mathbb{R}^{2d}$, $f_{1}\overline{T_{x}g_{1}} , f_{2}\overline{T_{x}g_{2}} \in  L^{2}(\mathbb{R}^{2d},\mathbb{H})$ and consequently by using  Fubini and relation (\ref{wq}), we deduce
\begin{eqnarray*}
\displaystyle\iint_{\mathbb{R}^{2d}\times\mathbb{R}^{2d}} |\mathcal{G}_{g_{1}}(f_{1})(x,w)|^{2} d\mu_{2d}(w)  d\mu_{2d}(x)  & = & \displaystyle\int_{\mathbb{R}^{2d}}\bigg(\displaystyle\int_{\mathbb{R}^{2d}} |\mathcal{F}_{Q}(f_{1}\overline{T_{x}g_{1}})(w)|^{2} d\mu_{2d}(w)\bigg)d\mu_{2d}(x)\\
& = & \displaystyle\int_{\mathbb{R}^{2d}}\bigg(\displaystyle\int_{\mathbb{R}^{2d}} |f_{1}(t)\overline{T_{x}g_{1}(t)}|^{2} d\mu_{2d}(t)\bigg)d\mu_{2d}(x)\\
& = & \displaystyle\int_{\mathbb{R}^{2d}}|f_{1}(t)|^{2}\bigg(\displaystyle\int_{\mathbb{R}^{2d}} |\overline{T_{x}g_{1}(t)}|^{2} d\mu_{2d}(x)\bigg)d\mu_{2d}(t)\\
& = & \|f_{1}\|_{2,2d}^{2} \|g_{1}\|_{2,2d}^{2}.
\end{eqnarray*}
Then $\mathcal{G}_{g_{1}}(f_{1}) \in L^{2}(\mathbb{R}^{2d}\times\mathbb{R}^{2d},\mathbb{H})$ and similarly for $\mathcal{G}_{g_{2}}(f_{2})$. By using the Parseval theorem for $\mathcal{F}_{Q}$ we have
\begin{eqnarray}\label{exp}
\langle\mathcal{G}_{g_{1}}(f_{1}),\mathcal{G}_{g_{2}}(f_{2})\rangle_{2,4d} & = & \displaystyle\int_{\mathbb{R}^{2d}} \langle \mathcal{F}_{Q}(f_{1}\overline{T_{x}g_{1}}),\mathcal{F}_{Q}(f_{2}\overline{T_{x}g_{2}})\rangle_{2,2d} d\mu_{2d}(x)\nonumber\\
 & = & \displaystyle\int_{\mathbb{R}^{2d}} \langle f_{1}\overline{T_{x}g_{1}},f_{2}\overline{T_{x}g_{2}}\rangle_{2,2d} d\mu_{2d}(x)\nonumber\\
 %%%%%%%%%%%%%%%%%%%%%%%%%%%%%%%%%%%%%%%%%%%%%%%%%%%%%%%%%%%%%%%%%%%%%%%%%%%%%%%%%%%%%%%%%%%%%%%%%%
  %%%%%%%%%%%%%%%%%%%%%%%%%%%%%%%%%%%%%%%%%%%%%%%%%%%%%%%%%%%%%%%%%%%%%%%%%%%%%%%%%%%%%%%%%%%%%%%%%%
  & = &  Sc\bigg(\displaystyle\int_{\mathbb{R}^{2d}}\displaystyle\int_{\mathbb{R}^{2d}}
 f_{1}(t) \overline{g_{1}(t-x)} g_{2}(t-x)\overline{f_{2}(t)} d\mu_{2d}(t) d\mu_{2d}(x)\bigg)\nonumber \\
  %%%%%%%%%%%%%%%%%%%%%%%%%%%%%%%%%%%%%%%%%%%%%%%%%%%%%%%%%%%%%%%%%%%%%%%%%%%%%%%%%%%%%%%%%%%%%%%%%%
   & = &  Sc\bigg(\displaystyle\int_{\mathbb{R}^{2d}}
 f_{1}(t) \bigg(\displaystyle\int_{\mathbb{R}^{2d}}\overline{g_{1}(\eta)} g_{2}(\eta)d\mu_{2}(\eta)\bigg)\overline{f_{2}(t)} d\mu_{2d}(t)\bigg)\nonumber \\
  %%%%%%%%%%%%%%%%%%%%%%%%%%%%%%%%%%%%%%%%%%%%%%%%%%%%%%%%%%%%%%%%%%%%%%%%%%%%%%%%%%%%%%%%%%%%%%%%%%
  & = &  Sc\bigg(\displaystyle\int_{\mathbb{R}^{2d}}  f_{1}(t)(\overline{g_{1}},\overline{g_{2}})_{2,2d} \overline{f_{2}(t)} d\mu_{2d}(t)\bigg) \nonumber\\
  & = & \langle f_{1} (\overline{g_{1}},\overline{g_{2}})_{2,2d},f_{2}\rangle_{2,2d}.
\end{eqnarray}
Let $\varphi$ be defined on  $L^{2}(\mathbb{R}^{2d},\mathbb{H})$ such that  $\varphi(g) = < f_{1} (\overline{g_{1}},\overline{g})_{2,2d},f_{2}>_{2,2d} .$\\
Then, $\varphi$ is linear and by using the Cauchy Schwartz's inequality, we deduce that for every $g \in L^{2}(\mathbb{R}^{2d},\mathbb{H})$
\begin{center}
$|\varphi(g)| \leq \|f_{1}\|_{2,2d}\|f_{2}\|_{2,2d}\|g_{1}\|_{2,2d}\|g\|_{2,2d} = C \|g\|_{2,2d}.$
\end{center}
In particular, $\varphi$ is a linear bounded form on $L^{2}(\mathbb{R}^{2d},\mathbb{H})$.\\
The same, let $\psi$ be the linear form  defined on $L^{1}(\mathbb{R}^{2d},\mathbb{H})\cap L^{\infty}(\mathbb{R}^{2d},\mathbb{H})$ with $\psi(g) = <\mathcal{G}_{g_{1}}(f_{1}),\mathcal{G}_{g}(f_{2})>_{2,4d}.$\\
Then, for every $g \in L^{1}(\mathbb{R}^{2d},\mathbb{H})\cap L^{\infty}(\mathbb{R}^{2d},\mathbb{H}) $ we have
$$\begin{tabular}{lll}
$|\psi(g)|$ & $=$ & $|<\mathcal{G}_{g_{1}}(f_{1}),\mathcal{G}_{g}(f_{2})>_{2,4d}|$\\
& $=$ & $|< f_{1} (\overline{g_{1}},\overline{g})_{2,2d},f_{2}>_{2,2d}|$\\
& $\leq$ & $\|f_{1}\|_{2,2d}\|f_{2}\|_{2,2d}\|g_{1}\|_{2,2d}\|g\|_{2,2d} .$
\end{tabular}$$
Which implies that $\psi$ is a linear form bounded on the dense subset $ L^{1}(\mathbb{R}^{2d},\mathbb{H})\cap L^{\infty}(\mathbb{R}^{2d},\mathbb{H}) $ of $ L^{2}(\mathbb{R}^{2d},\mathbb{H})$.\\
Let $g \in L^{2}(\mathbb{R}^{2d},\mathbb{H})$, there exists a sequence $g_{n}$ $\in  L^{1}(\mathbb{R}^{2d},\mathbb{H})\cap L^{\infty}(\mathbb{R}^{2d},\mathbb{H}) $ such that $\|g - g_{n}\|_{2,2d}\longrightarrow 0$. \\
This shows that $\psi$ is bounded and hence there exists $c\geq 0$ such that for every $n, m \in \mathbb{N}$ we get
\begin{center}
$|\psi(g_{n})-\psi(g_{m})| = |\psi(g_{n} - g_{m})| \leq c \|g_{n} - g_{m}\|_{2,2d}$.
\end{center}
In particular, $(\psi(g_{n}))_{n\in\mathbb{N}}$ is a Cauchy sequence in $\mathbb{H}$, that converges to some number which we call $\psi(g)$. That $\psi(g)$ does not depend on the choice of the sequence $(g_{n})_{n\in\mathbb{N}}$. By using relation (\ref{exp}), $\varphi$ and $\psi$ coincides on $L^{1}(\mathbb{R}^{2d},\mathbb{H})\cap L^{\infty}(\mathbb{R}^{2d},\mathbb{H}) $, and consequently,
\begin{center}
$\psi(g) = \displaystyle\lim_{n\longmapsto +\infty} \psi(g_{n}) = \displaystyle\lim_{n\longmapsto +\infty} \varphi(g_{n}) = \varphi(g)$.
\end{center}

\end{proof}
%%%%%%%%%%%%%%%%%%%%%%%%%%%%%%%%%%%%%%%%%%%%%%%%%%%%%%%%%%%%%%%%%%%%%%%%%%%%%%%%%%%%%%%%%%%%%%%%%%%%%%%%%%%%%%%%%%%%%%%%%%%%%%%%%%%%%%%%%%%%%%%%%%%%%%%%%%%%%%%%%%%%
\begin{cor}(Injection of $\mathcal{G}_{g}$)
\mbox{}\\
Let $g$ be a non zero quaternion windowed function, then for every $f \in L^{2}(\mathbb{R}^{2d},\mathbb{H})$,  if $\mathcal{G}_{g}f = 0$  then  $f = 0$.
\end{cor}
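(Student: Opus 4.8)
The plan is to deduce the injectivity immediately from the Plancherel identity (\ref{orth}) established in the preceding theorem. The whole argument rests on the observation that $\mathcal{G}_{g}$ preserves the $L^{2}$-norm up to the fixed positive factor $\|g\|_{2,2d}$, so a vanishing transform forces a vanishing norm for $f$ itself.

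Concretely, first I would assume that $\mathcal{G}_{g}(f) = 0$ in $L^{2}(\mathbb{R}^{2d}\times\mathbb{R}^{2d},\mathbb{H})$, which means $\|\mathcal{G}_{g}(f)\|_{2,4d} = 0$. Next I would apply the Plancherel theorem for $\mathcal{G}_{g}$, namely relation (\ref{orth}), to rewrite this as
\begin{equation*}
0 = \|\mathcal{G}_{g}(f)\|_{2,4d} = \|f\|_{2,2d}\,\|g\|_{2,2d}.
\end{equation*}
Since $g$ is assumed to be a non-zero quaternion window function, we have $\|g\|_{2,2d} \neq 0$, and therefore the product can vanish only if $\|f\|_{2,2d} = 0$. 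Finally, $\|f\|_{2,2d} = 0$ means that $|f(x)| = 0$ for almost every $x \in \mathbb{R}^{2d}$, i.e. $f = 0$ in $L^{2}(\mathbb{R}^{2d},\mathbb{H})$, which is exactly the claimed conclusion.

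There is no real obstacle here: the statement is a one-line corollary of the isometry-type identity (\ref{orth}), and the only point worth stating explicitly is that the non-vanishing of $\|g\|_{2,2d}$ is what allows us to cancel that factor. Equivalently, one may phrase the conclusion as: the map $f \mapsto \mathcal{G}_{g}(f)$ has trivial kernel, so $\mathcal{G}_{g}$ is injective on $L^{2}(\mathbb{R}^{2d},\mathbb{H})$ for every fixed non-zero window $g$.
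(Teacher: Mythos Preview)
Your proof is correct and follows exactly the same approach as the paper: both arguments invoke the Plancherel identity (\ref{orth}) to conclude from $\mathcal{G}_{g}(f)=0$ that $\|f\|_{2,2d}\|g\|_{2,2d}=0$, and then use $g\neq 0$ to force $f=0$. Your write-up is simply a more detailed version of the paper's one-line proof.
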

%%%%%%%%%%%%%%%%%%%%%%%%%%%%%%%%%%%%%%%%%%%%%%%%%%%%%%%%%%%%%%%%%%%%%%%%%%%%%%%%%%
\begin{proof}
Assume that $\mathcal{G}_{g}f = 0$, or we have by Plancherel Formula we have
$$\|\mathcal{G}_{g}(f)\|_{2,4d} = \|f\|_{2,2d}\|g\|_{2,2d}$$
we deduce that $f = 0$  .
\end{proof}
%%%%%%%%%%%%%%%%%%%%%%%%%%%%%%%%%%%%%%%%%%%%%%%%%%%%%%%%%%%%%%%%%%%%%%%%%%%%%%%%%%%%%%%%%%%%%%%%%%%%%
%%%%%%%%%%%%%%%%%%%%%%%%%%%%%%%%%%%%%%%%%%%%%%%%%%%%%%%%%%%%%%%%%%%%%%%%%%%%%%%%%%%%%%%%%%%%%%%%%%%%%%%%%%%%%%%%%%%%%%%%%%%%%%%%%%%%%%%%%%%%%%%%%%%%%%%%%%%%%%%%%%%%%%%%%%%%%%%%%%%%%%%%%%%%%%%%%%%%%%%%%%%%%%%%%%%%%%%%%%%%%%%%%%%%%%%%%%%%%%%%%%%%%%%%%%%%%%%%%%%%%%%%%%%%%%%%%%%%%%%%%%%%%%%%%%%%%%%%%%%%%%%
\begin{thm}(Reconstruction formula of $\mathcal{G}_{g}$) \\
Let $g \in L^{2}(\mathbb{R}^{2d},\mathbb{R})$ be a non zero real valued windowed function. Then every quaternion function $f \in L^{2}(\mathbb{R}^{2d},\mathbb{H})$ can be fully reconstructed by
\begin{equation}\label{recon}
f(t) = \dfrac{1}{\|g\|_{2,2d}^{2}}\displaystyle\iint_{\mathbb{R}^{2d}\times\mathbb{R}^{2d}}
 e^{it_{1}w_{1}} \mathcal{G}_{g}f(x,w)g(t-x) e^{jt_{2}w_{2}} d\mu_{2d}(w) d\mu_{2d}(x).
\end{equation}
\end{thm}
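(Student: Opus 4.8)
The plan is to establish (\ref{recon}) first in the weak sense (against an arbitrary test function) and then upgrade it to an $L^2$ identity, with the whole argument resting on the Parseval formula (\ref{orthogonalite}) specialized to a real window. Write $\tilde{f}(t)$ for the right-hand side of (\ref{recon}). Since $g$ is real-valued we have $\overline{g}=g$ and $(\overline{g},\overline{g})_{2,2d}=\int_{\mathbb{R}^{2d}}g^{2}\,d\mu_{2d}=\|g\|_{2,2d}^{2}$, which is a real scalar and hence commutes with $f$. Thus, taking $g_{1}=g_{2}=g$, $f_{1}=f$ and $f_{2}=h$ in (\ref{orthogonalite}), I obtain for every $h\in L^{2}(\mathbb{R}^{2d},\mathbb{H})$ the clean identity
\[
\langle \mathcal{G}_{g}(f),\mathcal{G}_{g}(h)\rangle_{2,4d}=\|g\|_{2,2d}^{2}\,\langle f,h\rangle_{2,2d}.
\]

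The heart of the argument is to re-express the left-hand side so that $\tilde{f}$ appears. I would write $\langle \mathcal{G}_{g}(f),\mathcal{G}_{g}(h)\rangle_{2,4d}=Sc\iint \mathcal{G}_{g}(f)(x,w)\,\overline{\mathcal{G}_{g}(h)(x,w)}\,d\mu_{2d}(w)\,d\mu_{2d}(x)$ and compute the conjugated transform directly from the definition (\ref{qw}). Using that conjugation is an anti-involution, that $\overline{e^{-it_{1}w_{1}}}=e^{it_{1}w_{1}}$ and $\overline{e^{-jt_{2}w_{2}}}=e^{jt_{2}w_{2}}$, and that $\overline{g}=g$, one finds
\[
\overline{\mathcal{G}_{g}(h)(x,w)}=\int_{\mathbb{R}^{2d}} e^{jt_{2}w_{2}}\,g(t-x)\,\overline{h(t)}\,e^{it_{1}w_{1}}\,d\mu_{2d}(t).
\]
Substituting this and invoking Fubini, the key manipulation is to rearrange the quaternionic product under the scalar part. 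Here I would use the cyclic symmetry $Sc(qrs)=Sc(rsq)$ to move the trailing factor $e^{it_{1}w_{1}}$ to the front, and then exploit that $g(t-x)$ is a \emph{real scalar} (this is precisely where the real-valuedness of $g$ is indispensable) to commute it past $e^{jt_{2}w_{2}}$. The integrand then becomes $Sc\big(e^{it_{1}w_{1}}\mathcal{G}_{g}(f)(x,w)\,g(t-x)\,e^{jt_{2}w_{2}}\,\overline{h(t)}\big)$.

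Carrying out the $(x,w)$-integration first recognizes the inner double integral as $\|g\|_{2,2d}^{2}\,\tilde{f}(t)$, so that $\langle \mathcal{G}_{g}(f),\mathcal{G}_{g}(h)\rangle_{2,4d}=\|g\|_{2,2d}^{2}\,\langle \tilde{f},h\rangle_{2,2d}$. Comparing with the specialized Parseval identity gives $\langle f-\tilde{f},h\rangle_{2,2d}=0$ for every $h$; choosing $h=f-\tilde{f}$ yields $\|f-\tilde{f}\|_{2,2d}=0$, i.e. $f=\tilde{f}$ in $L^{2}$. The main obstacle I anticipate is purely technical: justifying the interchange of the order of integration in the resulting triple integral and ensuring that $\tilde{f}$ is a genuine $L^{2}$ function. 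As in the proof of (\ref{orthogonalite}), I would first perform the computation for $h$ in the dense subspace $L^{1}(\mathbb{R}^{2d},\mathbb{H})\cap L^{\infty}(\mathbb{R}^{2d},\mathbb{H})$, where all integrands are absolutely integrable by the Cauchy--Schwarz inequality (\ref{cs}) and the boundedness estimate (\ref{1}), so that Fubini applies, and then extend the final identity to all of $L^{2}(\mathbb{R}^{2d},\mathbb{H})$ by density together with the continuity of both sides.
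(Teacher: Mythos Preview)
Your argument is correct and takes a genuinely different route from the paper's own proof. The paper proceeds pointwise: starting from $\mathcal{G}_{g}(f)(x,w)=\mathcal{F}_{Q}(f\overline{T_{x}g})(w)$, it applies the inverse QFT in the $w$-variable to recover $f(t)\overline{g(t-x)}$, then multiplies by the real number $g(t-x)$ and integrates in $x$, so that the left side becomes $f(t)\|g\|_{2,2d}^{2}$. This is short and transparent but tacitly assumes enough decay for the pointwise inverse QFT to apply. Your approach, by contrast, is a weak (duality) argument: you pair both sides with an arbitrary $h$, use the orthogonality relation (\ref{orthogonalite}) (which for a real $g$ collapses to $\|g\|_{2,2d}^{2}\langle f,h\rangle_{2,2d}$), and then exploit the cyclicity $Sc(qrs)=Sc(rsq)$ together with the commutativity of the real scalar $g(t-x)$ to rearrange the quaternionic product and extract $\tilde{f}$. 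What this buys you is an honest $L^{2}$ identity without any pointwise inversion hypothesis, at the price of the Fubini/density verification you already flag; what the paper's route buys is brevity and a clear picture of why the real-valuedness of $g$ is needed (to pull $g(t-x)$ past $e^{jt_{2}w_{2}}$ after the inverse QFT), but with less analytic rigor.
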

%%%%%%%%%%%%%%%%%%%%%%%%%%%%%%%%%%%%%%%%%%%%%%%%%%%%%%%%%%%%%%%%%%%%%%%%%%%%%%%%%%%%%%%%%%%%%%%%%%%%%%%%%%%%%%%%%%%%%%%%%%%%%%%%%%%%%%%%%%%%%%%%%%%%%%%%%%%%%%%%%%%%%%%%%%%%%%%%%%%%%%%%%%%%%%%%%%%%%%%%%%%%%%%%%%%%%%%%%%%%%%%%%%%%%%%%%%%%%%%%%%%%%%%%%%%%%%%%%%%%%%%%%%%%%%%%%%%%%%%%%%%%%%%%%%%%%%%%%%%%%%%%%%%%%%%%%%%%%%%%%%%%%%%%%%%%%%%%%%%%%%%%%%%%%%%%%%%%%%%%%%%%%%%%%%%%%%%%%%%%%%%%%%%%%%%%%%%%%%%%
\begin{proof}
It follows from (\ref{wq}) that $\mathcal{G}_{g}(f)(x,w) = \mathcal{F}_{Q}(f\overline{T_{x}g})(w)$.\\
Applying now the inverse QFT, we obtain
$$ f(t) \overline{g(t-x)} = \displaystyle\int_{\mathbb{R}^{2d}} e^{it_{1}w_{1}} \mathcal{G}_{g}f(x,w) e^{jt_{2}w_{2}} d\mu_{2d}(w).$$
Multiplying both sides by $g(t-x)$ (which we recall to be real-valued) and integrating with respect to $d\mu_{2d}(x)$ we get
$$f(t) \displaystyle\int_{\mathbb{R}^{2d}}|g(t-x)|^{2}d\mu_{2d}(x) = \displaystyle\iint_{\mathbb{R}^{2d}\times\mathbb{R}^{2d}} e^{it_{1}w_{1}} \mathcal{G}_{g}f(x,w)g(t-x) e^{jt_{2}w_{2}} d\mu_{2d}(w) d\mu_{2d}(x)$$
which gives the desired result.
Then the reconstruction formula can also be rewritten using the kernel of the QWFT as
\begin{center}
$f(t) = \dfrac{1}{\|g\|_{2,2d}^{2}}\displaystyle\iint_{\mathbb{R}^{2d}\times\mathbb{R}^{2d}}
 e^{it_{1}w_{1}} \mathcal{G}_{g}f(x,w)g(t-x) e^{jt_{2}w_{2}} d\mu_{2d}(w) d\mu_{2d}(x)$.
\end{center}
\end{proof}
\ \\ \ \\
\begin{example}
Given two real numbers $a$, $b$ and let $f$ and $g$ be the Gaussian functions defined respectively by\\ $f(x) = (4a)^{\frac{d}{2}} e^{-a|x|^{2}}$ and $g(x) = (4b)^{\frac{d}{2}} e^{-b|x|^{2}}$, then we have\\\\
1. $\|f\|_{2,2d} = \|g\|_{2,2d} = 1$

2. For every $(x,w) \in \mathbb{R}^{2d}\times\mathbb{R}^{2d}$,
 $$|\mathcal{G}_{g}(f)(x,w)|^{2} = \dfrac{(4ab)^{d}}{(a+b)^{2d}}e^{-\frac{2ab}{a+b}|x|^{2}} e^{-\frac{|w|^{2}}{2(a+b)}} $$
\end{example}
%%%%%%%%%%%%%%%%%%%%%%%%%%%%%%%%%%%%%%%%%%%%%%%%%%%%%%%%%%%%%%%%%%%%%%%%%%%%%%%%%%%%%%%%%%%%%%%%%%%%%%%%%%%%%%%%%%%%%%%%%%%%%%%%%%%%%%%%%%%%%%%%%%%%%%%%%%%%%%%%%%%%%%%%%%%%%%%%%%%%%%%%%%%%%%%%%%%%%%%%%%
\begin{proof}
2.
$$\begin{tabular}{lll}
$\mathcal{G}_{g}(f)(x,w)$ & $=$ & $\displaystyle\int_{\mathbb{R}^{2d}} e^{-it_{1}w_{1}} f(t)\overline{g(t-x)}e^{-jt_{2}w_{2}} d\mu_{2d}(t)$\\
& $=$ & $(16ab)^{\frac{d}{2}} \displaystyle\int_{\mathbb{R}^{2d}} e^{-it_{1}w_{1}}  e^{-a|t|^{2}}e^{-b|t-x|^{2}}e^{-jt_{2}w_{2}} d\mu_{2d}(t)$\\
& $=$ & $(16ab)^{\frac{d}{2}} \displaystyle\int_{\mathbb{R}^{2d}} e^{-it_{1}w_{1}}  e^{-a|t|^{2}} \bigg(e^{-b|t|^{2}} e^{2b<t,x>} e^{-b|x|^{2}} \bigg)e^{-jt_{2}w_{2}} d\mu_{2d}(t)$\\
& $=$ & $(16ab)^{\frac{d}{2}}e^{-b|x|^{2}} e^{\frac{b^{2}}{a+b}|x|^{2}} \displaystyle\int_{\mathbb{R}^{2d}} e^{-it_{1}w_{1}}\bigg(  e^{-(a+b)|t|^{2}}  e^{2b<t,x>} e^{-\frac{b^{2}}{a+b}|x|^{2}}\bigg) e^{-jt_{2}w_{2}} d\mu_{2d}(t)$\\
& $=$ & $(16ab)^{\frac{d}{2}} e^{-\frac{ab}{a+b}|x|^{2}} \displaystyle\int_{\mathbb{R}^{2d}} e^{-it_{1}w_{1}}\bigg(  e^{-(a+b)\big(|t|^{2}-2<t,\frac{b}{a+b}x> + \big|\frac{b}{a+b}x\big|^{2}\big)} \bigg) e^{-jt_{2}w_{2}} d\mu_{2d}(t)$\\
& $=$ & $(16ab)^{\frac{d}{2}} e^{-\frac{ab}{a+b}|x|^{2}} \displaystyle\int_{\mathbb{R}^{2d}} e^{-it_{1}w_{1}} e^{-(a+b)\big|t-\frac{b}{a+b}x\big|^{2}} e^{-jt_{2}w_{2}} d\mu_{2d}(t).$
\end{tabular}$$
By making the change of variable $u = t - \frac{b}{a+b}x$ and applying lemma 3 we get
$$\begin{tabular}{lll}
$\mathcal{G}_{g}(f)(x,w)$ & $=$ & $(16ab)^{\frac{d}{2}} e^{-\frac{ab}{a+b}|x|^{2}} \displaystyle\int_{\mathbb{R}^{2d}} e^{-i\big(u_{1}+\frac{b}{a+b}x_{1}\big)w_{1}} e^{-(a+b)|u|^{2}} e^{-j\big(u_{2}+\frac{b}{a+b}x_{2}\big)w_{1}}d\mu_{2d}(t)$\\
& $=$ & $(16ab)^{\frac{d}{2}} e^{-\frac{ab}{a+b}|x|^{2}}  e^{-i\frac{b}{a+b}x_{1}w_{1}}\bigg( \displaystyle\int_{\mathbb{R}^{2d}} e^{-iu_{1}w_{1}} e^{-(a+b)|u|^{2}} e^{-ju_{2}w_{2}}d\mu_{2d}(t)\bigg)e^{-j\frac{b}{a+b}x_{2}w_{2}}$\\
& $=$ & $(16ab)^{\frac{d}{2}} e^{-\frac{ab}{a+b}|x|^{2}}  e^{-i\frac{b}{a+b}x_{1}w_{1}} \mathcal{F}_{Q}(e^{-(a+b)|u|^{2}})(w) e^{-j\frac{b}{a+b}x_{2}w_{2}}$\\
& $=$ & $\dfrac{(4ab)^{\frac{d}{2}}}{(a+b)^{d}}   e^{-i\frac{b}{a+b}x_{1}w_{1}} e^{-\frac{ab}{a+b}|x|^{2}} e^{-\frac{|w|^{2}}{4(a+b)}} e^{-j\frac{b}{a+b}x_{2}w_{2}} $.\\
\end{tabular}$$
In particular
\begin{center}
$|\mathcal{G}_{g}(f)(x,w)|^{2} = \dfrac{(4ab)^{d}}{(a+b)^{2d}}e^{-\frac{2ab}{a+b}|x|^{2}} e^{-\frac{|w|^{2}}{2(a+b)}} $
\end{center}
\end{proof}
%%%%%%%%%%%%%%%%%%%%%%%%%%%%%%%%%%%%%%%%%%%%%%%%%%%%%%%%%%%%%%%%%%%%%%%%%%%%%%%%%%%%%%%%%%%%%%%%%%%%%%%%%%%%%%%%%%%%%%%%%%%%%%%%%%%%%%%%%%%%%%%%%%%%%%%%%%%%%%%%%%%%%%%%%%%%%%%%%%%%%%%%%%%%%%%%%%%%%%%%%%%%%
\begin{defn}(The radar quaternion ambiguity function) \cite{1,3}
\mbox{}\\
The radar quaternion ambiguity function (or the cross two sided quaternionic ambiguity function-QAF) of the two-dimensional functions (or signals) $f$ , $g$ $\in L^{2}(\mathbb{R}^{2d},\mathbb{H})$ is denoted by $A(f,g)$ and is defined by
\begin{equation}\label{rq}
A(f,g)(x,w) = \displaystyle\int_{\mathbb{R}^{2d}} e^{-it_{1}w_{1}} f(t+\dfrac{x}{2})\overline{g(t-\dfrac{x}{2})} e^{-jt_{2}w_{2}} d\mu_{2d}(t)
\end{equation}
\end{defn}
%%%%%%%%%%%%%%%%%%%%%%%%%%%%%%%%%%%%%%%%%%%%%%%%%%%%%%%%%%%%%%%%%%%%%%%%%%%%%%%%%%%%%%%%%%%%%%%%%%%%%%%%%%%%%%%%%%%%%%%%%%%%%%%%%%%%%%%%%%%%%%%%%%%%%%%%%%%%%%%%%%%%
%%%%%%%%%%%%%%%%%%%%%%%%%%%%%%%%%%%%%%%%%%%%%%%%%%%%%%%%%%%%%%%%%%%%%%%%%%%%%%%%%%%%%%%%%%%%%%%%%%%%%%%%%%%%%%%%%%%%%%%%%%%%%%%%%%%%%%%%%%%%%%%%%%%%%%%%%%%%%%%%%%%%
The next lemma describes the relationship between the two-sided QWFT and the two-sided QAF mentioned above.
%%%%%%%%%%%%%%%%%%%%%%%%%%%%%%%%%%%%%%%%%%%%%%%%%%%%%%%%%%%%%%%%%%%%%%%%%%%%%%%%%%%%%%%%%%%%%%%%%%%%%%%%%%%%%%%%%%%%%%%%%%%%%%%%%%%%%%%%%%%%%%%%%%%%%%%%%%%%%%%%%%%%
\begin{lem}
\mbox{}\\
Let $f$ , $g$ $\in L^{2}(\mathbb{R}^{2d},\mathbb{H})$ and $g \neq 0$ , for every $(x,w) \in \mathbb{R}^{2d}\times\mathbb{R}^{2d}$, we have
$$A(f,g)(x,w) = e^{iw_{1}\frac{x_{1}}{2}}\hspace{0.1 cm} \mathcal{G}_{g}(f)(x,w)\hspace{0.1 cm} e^{jw_{2}\frac{x_{2}}{2}} .$$
In particular
\begin{equation}\label{p1}
|A(f,g)(x,w)| = |\mathcal{G}_{g}(f)(x,w)| .
\end{equation}
\end{lem}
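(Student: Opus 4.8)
The plan is to obtain the stated identity by a single change of variable in the defining integral for $A(f,g)$, followed by a careful extraction of the $x$-dependent phase factors that respects the non-commutativity of $\mathbb{H}$.

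First I would start from the definition (\ref{rq}) and perform the translation $y = t + \frac{x}{2}$, so that $t + \frac{x}{2} = y$, $t - \frac{x}{2} = y - x$, and, writing $y = (y_1,y_2)$, we have $t_1 = y_1 - \frac{x_1}{2}$ and $t_2 = y_2 - \frac{x_2}{2}$. Since translation leaves the normalized Lebesgue measure invariant, $d\mu_{2d}(t) = d\mu_{2d}(y)$, and the integral becomes $\int_{\mathbb{R}^{2d}} e^{-i(y_1 - \frac{x_1}{2})\cdot w_1} f(y)\overline{g(y-x)}\, e^{-j(y_2 - \frac{x_2}{2})\cdot w_2}\, d\mu_{2d}(y)$.

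Next I would split each exponential according to the additive decomposition in its exponent. Because $-i(y_1-\frac{x_1}{2})\cdot w_1 = i\frac{x_1}{2}\cdot w_1 - iy_1\cdot w_1$ and both summands lie in the commutative complex line $\mathbb{R}\oplus i\mathbb{R}$, we get $e^{-i(y_1-\frac{x_1}{2})\cdot w_1} = e^{i\frac{x_1}{2}\cdot w_1}\, e^{-iy_1\cdot w_1}$; symmetrically, $e^{-j(y_2-\frac{x_2}{2})\cdot w_2} = e^{-jy_2\cdot w_2}\, e^{j\frac{x_2}{2}\cdot w_2}$, using that the $j$-exponentials commute among themselves. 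The crucial point, and the only place where care is needed, is that the $x$-dependent factor $e^{i\frac{x_1}{2}\cdot w_1}$ must be kept on the far left and $e^{j\frac{x_2}{2}\cdot w_2}$ on the far right: since $i$ and $j$ do not commute, neither factor may be moved across $f(y)\overline{g(y-x)}$ nor across the opposite exponential. Both phase factors are constant in $y$, so pulling them out of the integral leaves exactly $\int_{\mathbb{R}^{2d}} e^{-iy_1\cdot w_1} f(y)\overline{g(y-x)}\, e^{-jy_2\cdot w_2}\, d\mu_{2d}(y)$, which by (\ref{qw}) equals $\mathcal{G}_{g}(f)(x,w)$. This yields $A(f,g)(x,w) = e^{i\frac{x_1}{2}\cdot w_1}\,\mathcal{G}_{g}(f)(x,w)\, e^{j\frac{x_2}{2}\cdot w_2}$, the asserted formula.

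Finally, for the modulus identity I would invoke the multiplicativity $|pq| = |p||q|$ together with the observation that $e^{i\frac{x_1}{2}\cdot w_1}$ and $e^{j\frac{x_2}{2}\cdot w_2}$ are unit quaternions, since a real multiple of $i$ (respectively $j$) exponentiates to an element of modulus one. Hence both phase factors contribute a factor $1$, and $|A(f,g)(x,w)| = |\mathcal{G}_{g}(f)(x,w)|$. The only genuine obstacle is the bookkeeping of non-commutativity in the extraction step; the substitution and the modulus computation are otherwise routine.
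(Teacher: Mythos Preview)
Your argument is correct: the substitution $y=t+\frac{x}{2}$ followed by factoring the $y$-independent phases $e^{iw_1\frac{x_1}{2}}$ on the left and $e^{jw_2\frac{x_2}{2}}$ on the right (respecting non-commutativity) is exactly the computation the lemma requires, and the modulus identity follows from $|pq|=|p||q|$ and the fact that both phase factors are unit quaternions. The paper states this lemma without proof, so there is nothing to compare against; your derivation is the natural one and is presumably what the authors had in mind.
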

%%%%%%%%%%%%%%%%%%%%%%%%%%%%%%%%%%%%%%%%%%%%%%%%%%%%%%%%%%%%%%%%%%%%%%%%%%%%%%%%%%%%%%%%%%%%%%%%%%%%%%%%%%%%%%%%%%%%%%%%%%%%%%%%%%%%%%%%%%%%%%%%%%%%%%%%%%%%%%%%%%%%
%%%%%%%%%%%%%%%%%%%%%%%%%%%%%%%%%%%%%%%%%%%%%%%%%%%%%%%%%%%%%%%%%%%%%%%%%%%%%%%%%%%%%%%%%%%%%%%%%%%%%%%%%%%%%%%%%%%%%%%%%%%%%%%%%%%%%%%%%%%%%%%%%%%%%%%%%%%%%%%%%%%%
%%%%%%%%%%%%%%%%%%%%%%%%%%%%%%%%%%%%%%%%%%%%%%%%%%%%%%%%%%%%%%%%%%%%%%%%%%%%%%%%%%%%%%%%%%%%%%%%%%%%%%%%%%%%%%%%%%%%%%%%%%%%%%%%%%%%%%%%%%%%%%%%%%%%%%%%%%%%%%%%%%%%%%%%%%%%%%%%%%%%%%%%%%%%%%%%%%%%%%%%%%%%%%%%%%%%%%%%%%%%%%%%%%%%%%%%%%%%%%%%%%%%%%%%%%%%%%%%%%%%%%%%%%%%%%%%%%%%%%%%%%%%%%%%%%%%%%%%%%%%%%%
\begin{defn}(Quaternionic Wigner transformation) \cite{1,3}
\mbox{}\\
The Quaternion Wigner transformation (or the cross two-sided Quaternion Wigner-ville distribution-QWVT) of two-dimensional functions (or signals) $f$ , $g$ $\in L^{2}(\mathbb{R}^{2d},\mathbb{H})$ is given by
\begin{equation}\label{wigner}
W(f,g)(x,w) = \displaystyle\int_{\mathbb{R}^{2d}} e^{-iw_{1}t_{1}} f(x+\dfrac{t}{2}) \overline{g(x-\dfrac{t}{2})} e^{-jw_{2}t_{2}} d\mu_{2d}(t)
\end{equation}
\end{defn}
%%%%%%%%%%%%%%%%%%%%%%%%%%%%%%%%%%%%%%%%%%%%%%%%%%%%%%%%%%%%%%%%%%%%%%%%%%%%%%%%%%%%%%%%%%%%%%%%%%%%%%%%%%%%%%%%%%%%%%%%%%%%%%%%%%%%%%%%%%%%%%%%%%%%%%%%%%%%%%%%%%%%%%%%%%%%%%%%%%%%%%%%%%%%%%%%%%%%%%%%%%
The following lemma describes the relationship between the two-sided QWFT and the two-sided QWVT mentioned above.

%%%%%%%%%%%%%%%%%%%%%%%%%%%%%%%%%%%%%%%%%%%%%%%%%%%%%%%%%%%%%%%%%%%%%%%%%%%%%%%%%%%%%%%%%%%%%%%%%%%%%%%%%%%%%%%%%%%%%%%%%%%%%%%%%%%%%%%%%%%%%%%%%%%%%%%%%%%%%%%%%%%%
\begin{lem}
\mbox{}\\
Let $f$ , $g$ $\in L^{2}(\mathbb{R}^{2d},\mathbb{H})$  such as $g \neq 0$ and for every $(x,w) \in \mathbb{R}^{2d}\times \mathbb{R}^{2d}$, we have
$$W(f,g)(x,w) =2^{2d} e^{2iw_{1}x_{1}} \mathcal{G}_{\check{g}}(f)(2x,2w) e^{2jw_{2}x_{2}} .$$
In particular
\begin{equation}\label{p2}
|W(f,g)(x,w)| = 2^{2d} |\mathcal{G}_{\check{g}}(f)(2x,2w)|.
\end{equation}
\end{lem}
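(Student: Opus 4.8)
The plan is to start from the definition of $\mathcal{G}_{\check{g}}(f)$ evaluated at the rescaled point $(2x,2w)$ and collapse it onto the Wigner integral by a single change of variables, exactly mirroring the computation behind the preceding QAF lemma. Writing $w=(w_1,w_2)$ and the QWFT integration variable as $y=(y_1,y_2)\in\mathbb{R}^{2d}$, and recalling $\check{g}(\cdot)=g(-\cdot)$ so that $\overline{\check{g}(y-2x)}=\overline{g(2x-y)}$, the definition (\ref{qw}) evaluated with window $\check g$, shift $2x$ and frequency $2w=(2w_1,2w_2)$ reads
$$\mathcal{G}_{\check{g}}(f)(2x,2w)=\int_{\mathbb{R}^{2d}}e^{-2i y_{1}.w_{1}}\,f(y)\,\overline{g(2x-y)}\,e^{-2j y_{2}.w_{2}}\,d\mu_{2d}(y).$$

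First I would substitute $y=x+\tfrac{t}{2}$, i.e. $t=2(y-x)$. This turns the argument of $f$ into $f(x+\tfrac{t}{2})$ and, since $2x-y=x-\tfrac{t}{2}$, the argument of $\overline{g}$ into $\overline{g(x-\tfrac{t}{2})}$, reproducing precisely the two translates in (\ref{wigner}). The Jacobian of $y=x+\tfrac{t}{2}$ on $\mathbb{R}^{2d}$ contributes a factor $2^{-2d}$, so $d\mu_{2d}(y)=2^{-2d}\,d\mu_{2d}(t)$; this is the origin of the constant $2^{2d}$ in the statement.

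The key step is handling the phase factors with care for non-commutativity. Since $y_1=x_1+\tfrac{t_1}{2}$ and $y_2=x_2+\tfrac{t_2}{2}$, and since exponentials lying in a single imaginary plane commute with one another, the left phase splits as $e^{-2i y_{1}.w_{1}}=e^{-2i x_{1}.w_{1}}\,e^{-i t_{1}.w_{1}}$ and the right phase as $e^{-2j y_{2}.w_{2}}=e^{-j t_{2}.w_{2}}\,e^{-2j x_{2}.w_{2}}$. The factor $e^{-2i x_{1}.w_{1}}$ is independent of $t$ and occupies the far left of the integrand, so it may be pulled out to the left of the integral; likewise $e^{-2j x_{2}.w_{2}}$ occupies the far right and may be pulled out to the right. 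What remains inside is exactly $2^{-2d}$ times the Wigner integral (\ref{wigner}), so that
$$\mathcal{G}_{\check{g}}(f)(2x,2w)=2^{-2d}\,e^{-2i w_{1}.x_{1}}\,W(f,g)(x,w)\,e^{-2j w_{2}.x_{2}}.$$

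Finally I would solve for $W(f,g)(x,w)$ by multiplying on the left by $2^{2d}e^{2i w_{1}.x_{1}}$ and on the right by $e^{2j w_{2}.x_{2}}$; the $i$-plane factors cancel on the left and the $j$-plane factors cancel on the right, yielding the claimed identity. The modulus relation (\ref{p2}) is then immediate from $|pq|=|p||q|$ together with the fact that the quaternionic exponentials are unimodular. The only genuine obstacle is the bookkeeping of non-commutativity: one must keep the $i$-phase strictly to the left and the $j$-phase strictly to the right at every stage, since $f\,\overline{g}$ does not commute with these factors; the argument rescaling and the Jacobian are otherwise routine.
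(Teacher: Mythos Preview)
Your proof is correct: the change of variables $y=x+\tfrac{t}{2}$ with Jacobian $2^{-2d}$, together with the splitting of the left $i$-phase and right $j$-phase and their extraction from the integral, is exactly the right computation, and your care in keeping the $i$-exponential on the far left and the $j$-exponential on the far right throughout is the essential point in the quaternionic setting. The paper states this lemma without proof, so there is no alternative argument to compare against; your derivation is the natural one and matches the spirit of the analogous QAF identity quoted just before.
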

%%%%%%%%%%%%%%%%%%%%%%%%%%%%%%%%%%%%%%%%%%%%%%%%%%%%%%%%%%%%%%%%%%%%%%%%%%%%%%%%%%%%%%%%%%%%%%%%%%%%%%%%%%%%%%%%%%%%%%%%%%%%%%%%%%%%%%%%%%%%%%%%%%%%%%%%%%%%%%%%%%%%
%%%%%%%%%%%%%%%%%%%%%%%%%%%%%%%%%%%%%%%%%%%%%%%%%%%%%%%%%%%%%%%%%%%%%%%%%%%%%%%%%%%%%%%%%%%%%%%%%%%%%%%%%%%%%%%%%%%%%%%%%%%%%%%%%%%%%%%%%%%%%%%%%%%%%%%%%%%%%%%%%%%%%%%%%%%%%%%%%%%%%%%%%%%%%%%%%%%%%%%%%%%%%%%%%%%%%%%%%%%%%%%%%%%%%%%%%%%%%%%%%%%%%%%%%%%%%%%%%%%%%%%%%%%%%%%%%%%%%%%%%%%%%%%%%%%%%%%%%%%%%%%
\section{Uncertainty Principle}
Our purpose in this section is to prove three Lieb uncertainty principle, Logarithmic uncertainty principle, Beckner uncertainty principle in terme of entropy, Heisenberg uncertainty principle for QWFT, QWVT and QAF. Last we study this functions on subset of finite measures.
%%%%%%%%%%%%%%%%%%%%%%%%%%%%%%%%%%%%%%%%%%%%%%%%%%%%%%%%%%%%%%%%%%%%%%%%%%%%%%%%%%%%%%%%%%%%%%%%%%%%%%%%%%%%%%%%%%%%%%%%%%%%%%%%%%%%%%%%%%%%%%%%%%%%%%%%%%%%%%%%%%%%%%%%%%%%%%%%%%%%%%%%%%%%%%%%%%%%%%%%%%
\subsection{Lieb uncertainty principle for $\mathcal{G}_{\psi}$}
In this subsection we will prove three Lieb uncertainty principle for both of the QWFT, QWVT and QAF.
\begin{defn}
\mbox{}\\
A function $f \in L^{2}(\mathbb{R}^{2d},\mathbb{H})$ is said to be $\varepsilon-$concentrated on a measurable set $U\subseteq \mathbb{R}^{2d}$, where $U^{c} = \mathbb{R}^{2d}\setminus U$, if
\begin{equation}\label{14}
\bigg(\displaystyle\int_{U^{c}} |f(x_{1},x_{2})|^{2} d\mu_{d}(x_{1}) d\mu_{d}(x_{2})\bigg)^{\frac{1}{2}} \leq \varepsilon \|f\|_{2,2d}.
\end{equation}
If $0 \leq \varepsilon \leq \frac{1}{2}$, then the most of energy is concentrated on $U$, and $U$ can be called the essential support of $f$.\\
If $\varepsilon = 0$, then $U$ contains the  support of $f$.
\end{defn}
%%%%%%%%%%%%%%%%%%%%%%%%%%%%%%%%%%%%%%%%%%%%%%%%%%%%%%%%%%%%%%%%%%%%%%%%%%%%%%%%%%%%%%%%%%%%%%%%%%%%%%%%%%%%%%%%%%%%%%%%%%%%%%%%%%%%%%%%%%%%%%%%%%%%%%%%%%%%%%%%%%%%%%%%%%%%%%%%%%%%%%%%%%%%%%%%%%%%%%%%%%%%%%%%%%%%%%%%%%%%%%%%%%%%%%%%%%%%%%%%%%%%%%%%%%%%%%%%%%%%%%%%%%%%%%%%%%%%%%%%%%%%%%%%%%%%%%%%%%%%%%%%%%%%%%%%%%%%%%%%%%%%%%%%%%%%%%%%%%%%%%%%%%%%%%%%%%%%%%%%%%%%%%%%%%%%%%%%
\begin{thm}(Donoho-Stark)\\
Let $g$ be a (non-zero) quaternion window function and $f \in L^{2}(\mathbb{R}^{2d},\mathbb{H})$ such that $f \neq 0$. Let $U$ a measurable set of $\mathbb{R}^{2d}\times\mathbb{R}^{2d}$ and $\varepsilon\geq 0$.\\
 If $\mathcal{G}_{g}(f)$ is $\varepsilon-$concentrated on $U$, hence we have
 \begin{equation}\label{donohostarck}
 (1-\varepsilon^{2}) \leq \mu_{4d}(U).
 \end{equation}
\end{thm}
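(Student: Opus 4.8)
The plan is to bound the energy of $\mathcal{G}_g(f)$ on $U$ from below using the $\varepsilon$-concentration hypothesis, and from above using the sup-norm bound $\|\mathcal{G}_g(f)\|_{\infty,4d}\le\|f\|_{2,2d}\|g\|_{2,2d}$ together with the Plancherel identity $\|\mathcal{G}_g(f)\|_{2,4d}=\|f\|_{2,2d}\|g\|_{2,2d}$. The key is to relate the total energy, the energy on $U$, and the measure $\mu_{4d}(U)$.

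First I would normalize and assume without loss of generality that $\|f\|_{2,2d}=\|g\|_{2,2d}=1$, so that by the Plancherel theorem \eqref{orth} we have $\|\mathcal{G}_g(f)\|_{2,4d}^2=1$ and by the bound \eqref{1} we have $|\mathcal{G}_g(f)(x,w)|\le 1$ pointwise. The $\varepsilon$-concentration of $\mathcal{G}_g(f)$ on $U$ means
\begin{equation*}
\iint_{U^c}|\mathcal{G}_g(f)(x,w)|^2\,d\mu_{4d}(x,w)\le\varepsilon^2\,\|\mathcal{G}_g(f)\|_{2,4d}^2=\varepsilon^2.
\end{equation*}
Splitting the total integral over $U$ and $U^c$, this gives
\begin{equation*}
\iint_U|\mathcal{G}_g(f)(x,w)|^2\,d\mu_{4d}(x,w)=1-\iint_{U^c}|\mathcal{G}_g(f)(x,w)|^2\,d\mu_{4d}(x,w)\ge 1-\varepsilon^2.
\end{equation*}

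Next I would bound the energy on $U$ from above: since $|\mathcal{G}_g(f)|\le 1$ on all of $\mathbb{R}^{2d}\times\mathbb{R}^{2d}$, we have
\begin{equation*}
\iint_U|\mathcal{G}_g(f)(x,w)|^2\,d\mu_{4d}(x,w)\le\iint_U 1\,d\mu_{4d}(x,w)=\mu_{4d}(U).
\end{equation*}
Chaining the two displayed inequalities yields $1-\varepsilon^2\le\mu_{4d}(U)$, which is exactly \eqref{donohostarck}. The normalization is harmless because both the concentration condition and the conclusion are invariant under scaling $f$ and $g$.

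I do not anticipate a serious obstacle here; the argument is essentially a two-line sandwich once the right two facts are invoked. The only point requiring mild care is making the normalization step clean (or, equivalently, carrying the factors $\|f\|_{2,2d}^2\|g\|_{2,2d}^2$ through both bounds and cancelling them), and ensuring the pointwise bound $|\mathcal{G}_g(f)|\le\|f\|_{2,2d}\|g\|_{2,2d}$ from \eqref{1} is applied correctly so that $|\mathcal{G}_g(f)|^2\le\|f\|_{2,2d}^2\|g\|_{2,2d}^2$ on $U$ matches the normalized total energy from \eqref{orth}.
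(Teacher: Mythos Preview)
Your proof is correct and follows essentially the same approach as the paper: both use the Plancherel identity \eqref{orth} together with the $\varepsilon$-concentration hypothesis to bound the energy on $U$ from below, and the sup-norm estimate \eqref{1} to bound it from above by $\mu_{4d}(U)\|f\|_{2,2d}^{2}\|g\|_{2,2d}^{2}$, then cancel. The only cosmetic difference is that you normalize $\|f\|_{2,2d}=\|g\|_{2,2d}=1$ at the outset, whereas the paper carries the norms through and divides at the end.
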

%%%%%%%%%%%%%%%%%%%%%%%%%%%%%%%%%%%%%%%%%%%%%%%%%%%%%%%%%%%%%%%%%%%%%%%%%%%%%%%%%%%%%%%%%%%%%%%%%%%%%%%%%%%%%%%%%%%%%%%%%%%%%%%%%%%%%%%%%%%%%%%%%%%%%%%%%%%%%%%%%%%%%%%%%%%%%%%%%%%%%%%%%%%%%%%%%%%%%%%%%%%%%%%%%%%%%%%%%%%%%%%%%%%%%%%%%%%%%%%%%%%%%%%%%%%%%%%%%%%%%%%%%%%%%%%%%%%%%%%%%%
%%%%%%%%%%%%%%%%%%%%%%%%%%%%%%%%%%%%%%%%%%%%%%%%%%%%%%%%%%%%%%%%%%%%%%%%%%%%%%%%%%%%%%%%%%%%%%%%%%%%%%%%%%%%%%%%%%%%%%%%%%%%%%%%%%%%%%%%%%%%%%%%%%%%%%%%%%%%%%%%%%%%%%%%%%%%%%%%%%%%%%%%%%%%%%%%%%%%%%%%%%%%%%%%%%%%%%%%%%%%%%%%%%%%%%%%%%%%%%%%%%%%%%%%%%%%%%%%%%%%%%%%%%%%%%%%%%%%%%%%%%%%%%%%%%%%%%%%%%%%
\begin{proof}
By relation (\ref{orth}), we have
\begin{eqnarray*}
\|f\|_{2,2d}^{2}\|g\|_{2,2d}^{2} & = & \|\mathcal{G}_{g}(f)\|_{2,4d}^{2}\\
& = & \|\chi_{U^{c}}\mathcal{G}_{g}(f)\|_{2,4d}^{2} + \|\chi_{U}\mathcal{G}_{g}(f)\|_{2,4d}^{2}\\
& \leq & \varepsilon^{2}\|\mathcal{G}_{g}(f)\|_{2,4d}^{2} +  \|\chi_{U}\mathcal{G}_{g}(f)\|_{2,4d}^{2}\\
& = & \varepsilon^{2} \|f\|_{2,2d}^{2}\|g\|_{2,2d}^{2} +  \|\chi_{U}\mathcal{G}_{g}(f)\|_{2,4d}^{2}.
\end{eqnarray*}
Consequently , by using the relation (\ref{1}), we deduce
\begin{eqnarray*}
(1-\varepsilon^{2}) \|f\|_{2,2d}^{2}\|g\|_{2,2d}^{2}  \leq  \|\chi_{U}\mathcal{G}_{g}(f)\|_{2,4d}^{2}
 \leq  \mu_{4d}(U) \|\mathcal{G}_{g}(f)\|_{\infty,4d}^{2} \leq \mu_{4d}(U) \|f\|_{2,2d}^{2}\|g\|_{2,2d}^{2}.
\end{eqnarray*}
we may simplify by $\|f\|_{2,2d}^{2}\|g\|_{2,2d}^{2}$ to obtain the desired result.
\end{proof}
\\
%%%%%%%%%%%%%%%%%%%%%%%%%%%%%%%%%%%%%%%%%%%%%%%%%%%%%%%%%%%%%%%%%%%%%%%%%%%%%%%%%%%%%%%%%%%%%%%%%%%%%%%%%%%%%%%%%%%%%%%%%%%%%%%%%%%%%%%%%%%%%%%%%%%%%%%%%%%%%%%%%%%%
%%%%%%%%%%%%%%%%%%%%%%%%%%%%%%%%%%%%%%%%%%%%%%%%%%%%%%%%%%%%%%%%%%%%%%%%%%%%%%%%%%%%%%%%%%%%%%%%%%%%%%%%%%%%%%%%%%%%%%%%%%%%%%%%%%%%%%%%%%%%%%%%%%%%%%%%%%%%%%%%%%%%%%%%%%%%%%%%%%%%%%%%%%%%%%%%%%%%%%%
From the above definition and Lieb inequality, the following theorem follows
%%%%%%%%%%%%%%%%%%%%%%%%%%%%%%%%%%%%%%%%%%%%%%%%%%%%%%%%%%%%%%%%%%%%%%%%%%%%%%%%%%%%%%%%%%%%%%%%%%%%%%%%%%%%%%%%%%%%%%%%%%%%%%%%%%%%%%%%%%%%%%%%%%%%%%%%%%%%%%%%%%%%
%%%%%%%%%%%%%%%%%%%%%%%%%%%%%%%%%%%%%%%%%%%%%%%%%%%%%%%%%%%%%%%%%%%%%%%%%%%%%%%%%%%%%%%%%%%%%%%%%%%%%%%%%%%%%%%%%%%%%%%%%%%%%%%%%%%%%%%%%%%%%%%%%%%%%%%%%%%%%%%%%%%%
\begin{thm} (Lieb uncertainty principle)\\
Let $g$ be (non-zero) quaternion window function and $f \in L^{2}(\mathbb{R}^{2d},\mathbb{H})$ such that $f \neq 0$. Let $U$ a measurable set of $\mathbb{R}^{2d}\times\mathbb{R}^{2d}$ and $\varepsilon \geq 0$. If  $\mathcal{G}_{g}(f)$ is $\varepsilon$-concentrated on $U$, hence for every $p > 2$ we have
\begin{equation}\label{16}
C_{p,q}^{\frac{2p}{2-p}} \big(1 - \varepsilon^{2}\big)^{\frac{p}{p-2}} \leq  \mu_{4d}(U).
\end{equation}
when $C_{p,q} = \bigg(\dfrac{4}{p}\bigg)^{\frac{d}{p}}  \bigg(\dfrac{1}{q}\bigg)^{\frac{d}{q}}$; $\dfrac{1}{p}+\dfrac{1}{q} = 1$.
\end{thm}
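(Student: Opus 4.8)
The plan is to combine three facts already at our disposal, namely Plancherel's theorem (\ref{orth}), the Lieb inequality, and Hölder's inequality, the last of these serving as the bridge that forces the measure $\mu_{4d}(U)$ to appear. The argument is structurally identical to the Donoho--Stark proof just given, except that the crude $L^{\infty}$ bound (\ref{1}) is replaced by the sharper $L^{p}$ bound supplied by Lieb.

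First I would record the consequence of the $\varepsilon$-concentration hypothesis. By Plancherel's theorem, $\|\mathcal{G}_{g}(f)\|_{2,4d}^{2} = \|f\|_{2,2d}^{2}\|g\|_{2,2d}^{2}$, and $\varepsilon$-concentration of $\mathcal{G}_{g}(f)$ on $U$ means $\int_{U^{c}} |\mathcal{G}_{g}(f)|^{2}\, d\mu_{4d} \leq \varepsilon^{2}\|\mathcal{G}_{g}(f)\|_{2,4d}^{2}$. Subtracting the tail from the total yields the lower bound
$$\int_{U} |\mathcal{G}_{g}(f)(x,w)|^{2}\, d\mu_{4d}(x,w) \geq (1-\varepsilon^{2})\,\|f\|_{2,2d}^{2}\,\|g\|_{2,2d}^{2}.$$

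Next I would bound the very same integral from above. Writing $\int_{U}|\mathcal{G}_{g}(f)|^{2} = \int \chi_{U}\,|\mathcal{G}_{g}(f)|^{2}$ and applying Hölder's inequality with the conjugate exponents $\frac{p}{2}$ and $\frac{p}{p-2}$ (legitimate precisely because $p>2$), together with the identity $\chi_{U}^{p/(p-2)} = \chi_{U}$, gives
$$\int_{U}|\mathcal{G}_{g}(f)|^{2}\, d\mu_{4d} \leq \|\mathcal{G}_{g}(f)\|_{p,4d}^{2}\,\big(\mu_{4d}(U)\big)^{\frac{p-2}{p}}.$$
The Lieb inequality then controls the $L^{p}$ factor: $\|\mathcal{G}_{g}(f)\|_{p,4d}^{2} \leq C_{p,q}^{2}\,\|f\|_{2,2d}^{2}\,\|g\|_{2,2d}^{2}$.

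Finally I would chain the two bounds, cancel the common nonzero factor $\|f\|_{2,2d}^{2}\|g\|_{2,2d}^{2}$ (nonzero since $f\neq 0$ and $g\neq 0$), obtaining $(1-\varepsilon^{2}) \leq C_{p,q}^{2}\,(\mu_{4d}(U))^{(p-2)/p}$, and then solve for $\mu_{4d}(U)$ by raising both sides to the power $\frac{p}{p-2}$. The exponent on the constant becomes $-2\cdot\frac{p}{p-2} = \frac{2p}{2-p}$, which reproduces (\ref{16}) exactly. The only delicate point is the exponent bookkeeping: one must pick the Hölder pair so that the indicator contributes $\mu_{4d}(U)^{(p-2)/p}$, and then invert the power correctly so that the constant lands as $C_{p,q}^{2p/(2-p)}$; once the Hölder exponents are chosen, everything else is a direct substitution of the previously established Plancherel and Lieb estimates.
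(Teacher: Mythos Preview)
Your proof is correct and follows essentially the same approach as the paper: obtain the lower bound $\|\chi_{U}\mathcal{G}_{g}(f)\|_{2,4d}^{2}\geq (1-\varepsilon^{2})\|f\|_{2,2d}^{2}\|g\|_{2,2d}^{2}$ from Plancherel and $\varepsilon$-concentration, then apply H\"older with exponents $\tfrac{p}{2},\tfrac{p}{p-2}$ followed by the Lieb inequality to get the matching upper bound, cancel, and invert. Your remark that this is the Donoho--Stark argument with the $L^{\infty}$ bound replaced by the Lieb $L^{p}$ bound is exactly how the paper proceeds as well.
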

%%%%%%%%%%%%%%%%%%%%%%%%%%%%%%%%%%%%%%%%%%%%%%%%%%%%%%%%%%%%%%%%%%%%%%%%%%%%%%%%%%%%%%%%%%%%%%%%%%%%%%%%%%%%%%%%%%%%%%%%%%%%%%%%%%%%%%%%%%%%%%%%%%%%%%%%%%%%%%%%%%%%
\begin{proof}
 $\mathcal{G}_{g}(f)$ is $\varepsilon$-concentrated on $U$, that is to say
$$\|\chi_{U^{c}}\mathcal{G}_{g}(f)\|_{2,4d} \leq \varepsilon \|\mathcal{G}_{g}(f)\|_{2,4d} = \varepsilon \|f\|_{2,2d} \|g\|_{2,2d} .$$
So
\begin{equation}\label{17}
\begin{tabular}{lllll}
$\|\chi_{U} \mathcal{G}_{g}(f)\|_{2,4d}^{2}$ & $=$ & $\| \mathcal{G}_{g}(f)\|_{2,4d}^{2} - \|\chi_{U^{c}} \mathcal{G}_{g}(f)\|_{2,4d}^{2}$ & $\geq$ & $\|f\|_{2,2d}^{2} \|g\|_{2,2d}^{2} - \varepsilon^{2} \|f\|_{2,2d}^{2} \|g\|_{2,2d}^{2}$\\
& $=$ & $\|f\|_{2,2d}^{2} \|g\|_{2,2d}^{2} (1 - \varepsilon^{2})$
\end{tabular}
\end{equation}
and
$$\begin{tabular}{lllll}
$\|\chi_{U} \mathcal{G}_{g}(f)\|_{2,4d}^{2}$ & $=$ & $\displaystyle\iint_{\mathbb{R}^{2d}\times\mathbb{R}^{2d}} \chi_{U}(x,w) |\mathcal{G}_{g}(f)(x,w)|^{2} d\mu_{4d}(x,w)$\\
 & $\leq$ & $\bigg(\displaystyle\iint_{\mathbb{R}^{2d}\times\mathbb{R}^{2d}} \big(\chi_{U}(x,w)\big)^{\frac{p}{p-2}} d\mu_{4d}(x,w)\bigg)^{\frac{p-2}{p}} $\\
& $\times$ & $ \bigg(\displaystyle\iint_{\mathbb{R}^{2d}\times\mathbb{R}^{2d}} |\mathcal{G}_{g}(f)(x,w)|^{2\frac{p}{2}} d\mu_{4d}(x,w)\bigg)^{\frac{2}{p}}$\\
 & $=$ & $\big( \mu_{4d}(U)\big)^{\frac{p-2}{p}} \times \|\mathcal{G}_{g}(f)\|_{p,4d}^{2}.$
\end{tabular}$$
On the other hand, and again by Lieb inequality and the relation (\ref{17}), we deduce that
$$\|\chi_{U} \mathcal{G}_{g}(f)\|_{2,4d}^{2} \leq \big( \mu_{4d}(U)\big)^{\frac{p-2}{p}} \times \|\mathcal{G}_{g}(f)\|_{p,4d}^{2} \leq C_{p,q}^{2}\big(\mu_{4d}(U)\big)^{\frac{p-2}{p}} \|f\|_{2,2d}^{2} \|g\|_{2,2d}^{2} $$
then
$$\|f\|_{2,2d}^{2} \|g\|_{2,2d}^{2} (1 - \varepsilon^{2}) \leq C_{p,q}^{2} \big(\mu_{4d}(U)\big)^{\frac{p-2}{p}} \|f\|_{2,2d}^{2} \|g\|_{2,2d}^{2}$$
and consequently
$$(1 - \varepsilon^{2}) \leq C_{p,q}^{2} \big(\mu_{4d}(U)\big)^{\frac{p-2}{p}}$$
hence
$$C_{p,q}^{\frac{2p}{2-p}} \big(1 - \varepsilon^{2}\big)^{\frac{p}{p-2}} \leq  \mu_{4d}(U).$$
\end{proof}
%%%%%%%%%%%%%%%%%%%%%%%%%%%%%%%%%%%%%%%%%%%%%%%%%%%%%%%%%%%%%%%%%%%%%%%%%%%%%%%%%%%%%%%%%%%%%%%%%%%%%%%%%%%%%%%%%%%%%%%%%%%%%%%%%%%%%%%%%%%%%%%%%%%%%%%%%%%%%%%%%%%%
\begin{cor}
\mbox{}\\
Let $f$ , $g$ $\in L^{2}(\mathbb{R}^{2d},\mathbb{H})$ such that $f \neq 0$ and $g \neq 0$. $U \subseteq \mathbb{R}^{2d}\times\mathbb{R}^{2d}$ a measurable set and $\varepsilon \geq 0$. If $A(f,g)$ is $\varepsilon$-concentrated on $U$ we get
\begin{center}
$\forall p > 2$ \hspace{0.1 cm} $\mu_{4d}(U) \geq (1 - \varepsilon^{2})^{\frac{p}{p-2}} \bigg(C_{p,q}\bigg)^{\frac{2p}{2-p}}$.
\end{center}
\end{cor}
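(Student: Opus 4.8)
The plan is to deduce this corollary directly from the Lieb uncertainty principle for $\mathcal{G}_{g}(f)$ established in (\ref{16}), using the pointwise modulus identity (\ref{p1}) relating the radar quaternion ambiguity function to the QWFT. The observation that drives everything is that $\varepsilon$-concentration, as defined in (\ref{14}), depends on the transform only through its modulus, so any statement phrased purely in terms of concentration and the measure $\mu_{4d}(U)$ transfers verbatim between $A(f,g)$ and $\mathcal{G}_{g}(f)$.

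Concretely, I would first recall from the lemma that $|A(f,g)(x,w)| = |\mathcal{G}_{g}(f)(x,w)|$ for every $(x,w) \in \mathbb{R}^{2d}\times\mathbb{R}^{2d}$. Squaring and integrating over $U^{c}$ gives
\begin{equation*}
\int_{U^{c}} |A(f,g)(x,w)|^{2}\, d\mu_{4d}(x,w) = \int_{U^{c}} |\mathcal{G}_{g}(f)(x,w)|^{2}\, d\mu_{4d}(x,w),
\end{equation*}
and integrating over the whole space gives $\|A(f,g)\|_{2,4d} = \|\mathcal{G}_{g}(f)\|_{2,4d}$. Hence the hypothesis that $A(f,g)$ is $\varepsilon$-concentrated on $U$ is exactly equivalent to the hypothesis that $\mathcal{G}_{g}(f)$ is $\varepsilon$-concentrated on $U$; both the tail integral over $U^{c}$ and the normalizing $L^{2}$-norm appearing in (\ref{14}) coincide for the two transforms.

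With this equivalence in hand, I would simply invoke (\ref{16}) for $\mathcal{G}_{g}(f)$: since $f \neq 0$, $g \neq 0$, and $\mathcal{G}_{g}(f)$ is $\varepsilon$-concentrated on $U$, we obtain $C_{p,q}^{\frac{2p}{2-p}} (1-\varepsilon^{2})^{\frac{p}{p-2}} \leq \mu_{4d}(U)$ for every $p > 2$, which is precisely the claimed inequality after writing the two factors in the stated order. In truth there is no serious obstacle here: the entire content is the recognition that (\ref{p1}) lets the concentration hypothesis pass unchanged from $A(f,g)$ to $\mathcal{G}_{g}(f)$. The only point worth verifying with care is that the condition (\ref{14}) is genuinely modulus-only on both sides, so that the phase factors $e^{iw_{1}x_{1}/2}$ and $e^{jw_{2}x_{2}/2}$ carried by the lemma cannot affect either integral.
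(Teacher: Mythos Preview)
Your proposal is correct and follows essentially the same approach as the paper: both arguments use the modulus identity (\ref{p1}) to transfer the $\varepsilon$-concentration hypothesis from $A(f,g)$ to $\mathcal{G}_{g}(f)$ and then invoke the Lieb uncertainty principle (\ref{16}). If anything, your version is slightly more explicit in checking that both the tail integral over $U^{c}$ and the normalizing $L^{2}$-norm in (\ref{14}) depend only on the modulus, whereas the paper states the equality on $U$ and passes directly to the conclusion.
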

%%%%%%%%%%%%%%%%%%%%%%%%%%%%%%%%%%%%%%%%%%%%%%%%%%%%%%%%%%%%%%%%%%%%%%%%%%%%%%%%%%%%%%%%%%%%%%%%%%%%%%%%%%%%%%%%%%%%%%%%%%%%%%%%%%%%%%%%%%%%%%%%%%%%%%%%%%%%%%%%%%%%
\begin{proof}
By relation (\ref{p1}), we have
$$\|\chi_{U} A(f,g)\|_{2,4d}^{2} = \displaystyle\iint_{U} |A(f,g)(x,w)|^{2} d\mu_{4d}(x,w)
= \displaystyle\iint_{U} |\mathcal{G}_{g}(f)(x,w)|^{2} d\mu_{4d}(x,w) = \|\chi_{U} \mathcal{G}_{g}(f)\|_{2,4d}^{2}$$
and
$$\|\chi_{U} \mathcal{G}_{g}(f)\|_{2,4d} \geq \|f\|_{2,2d}^{2}\|g\|_{2,2d}^{2} (1 - \varepsilon^{2})$$
then by relation (\ref{16}), we deduce that
$$\mu_{4d}(U) \geq (1 - \varepsilon^{2})^{\frac{p}{p-2}}\bigg(C_{p,q}\bigg)^{\frac{2p}{2-p}}.$$
\end{proof}
%%%%%%%%%%%%%%%%%%%%%%%%%%%%%%%%%%%%%%%%%%%%%%%%%%%%%%%%%%%%%%%%%%%%%%%%%%%%%%%%%%%%%%%%%%%%%%%%%%%%%%%%%%%%%%%%%%%%%%%%%%%%%%%%%%%%%%%%%%%%%%%%%%%%%%%%%%%%%%%%%%%%
\begin{lem}
Let $U \subseteq \mathbb{R}^{d}$ , $c > 0$ and $V = \{cx| x \in U , c>0\}$ then
\begin{equation}\label{18}
\mu_{d}(V) = c^{d} \mu_{d}(U).
\end{equation}
\end{lem}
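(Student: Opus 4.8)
The plan is to reduce the statement to the standard scaling behaviour of Lebesgue measure under dilations, since $\mu_d$ differs from ordinary Lebesgue measure only by the constant factor $(2\pi)^{-d/2}$, which will cancel in the computation. First I would fix $c>0$ and write $V = cU = \{cx : x \in U\}$, then express the measure of $V$ as the integral of its characteristic function,
\[
\mu_d(V) = \int_{\mathbb{R}^d} \chi_V(y)\, d\mu_d(y),
\]
observing that $y \in V$ if and only if $y/c \in U$, so that $\chi_V(y) = \chi_U(y/c)$.

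Next I would perform the change of variables $y = cx$ in this integral. Since $x \mapsto cx$ is a linear diffeomorphism of $\mathbb{R}^d$ with Jacobian determinant $c^d$, we have $dy = c^d\, dx$, and therefore
\[
\mu_d(V) = \int_{\mathbb{R}^d} \chi_U(x)\, \frac{c^d\, dx}{(2\pi)^{\frac{d}{2}}} = c^d \int_{\mathbb{R}^d} \chi_U(x)\, d\mu_d(x) = c^d\, \mu_d(U),
\]
which is exactly the asserted identity.

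There is essentially no serious obstacle here; the only point requiring (routine) care is the bookkeeping of the dilation factor, namely that in dimension $d$ the substitution $y = cx$ contributes $c^d$ rather than $c$, and that the normalizing constant $(2\pi)^{-\frac{d}{2}}$ factors out cleanly and hence plays no role in the final ratio. If one prefers to avoid the change of variables altogether, the same conclusion follows from the fact that $\mu_d$ is a constant multiple of the Haar (Lebesgue) measure on the additive group $\mathbb{R}^d$, together with the known homogeneity of Lebesgue measure under scalar dilations.
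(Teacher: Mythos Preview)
Your proof is correct and follows essentially the same approach as the paper: write $\mu_d(V)$ as the integral of $\chi_V$, use $\chi_V(y)=\chi_U(y/c)$, and perform the linear substitution $y=cx$ to pick up the factor $c^d$. The paper's argument is identical, just written as a single chain of equalities without the additional commentary.
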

%%%%%%%%%%%%%%%%%%%%%%%%%%%%%%%%%%%%%%%%%%%%%%%%%%%%%%%%%%%%%%%%%%%%%%%%%%%%%%%%%%%%%%%%%%%%%%%%%%%%%%%%%%%%%%%%%%%%%%%%%%%%%%%%%%%%%%%%%%%%%%%%%%%%%%%%%%%%%%%%%%%%
\begin{proof}
$$\mu_{d}(V) = \displaystyle\int_{\mathbb{R}^{d}} \chi_{V}(x) d\mu_{d}(x) = \displaystyle\int_{\mathbb{R}^{d}} \chi_{U}\big(\frac{x}{c}\big) d\mu_{d}(x) = c^{d} \displaystyle\int_{\mathbb{R}^{d}} \chi_{U}(y) d\mu_{d}(y) = c^{d} \mu_{d}(U).$$
\end{proof}
%%%%%%%%%%%%%%%%%%%%%%%%%%%%%%%%%%%%%%%%%%%%%%%%%%%%%%%%%%%%%%%%%%%%%%%%%%%%%%%%%%%%%%%%%%%%%%%%%%%%%%%%%%%%%%%%%%%%%%%%%%%%%%%%%%%%%%%%%%%%%%%%%%%%%%%%%%%%%%%%%%%%
\begin{cor}
Let $f$,$g$ $\in L^{2}(\mathbb{R}^{2d},\mathbb{H})$ such that $f\neq0$ and $g\neq0$. $U\subseteq \mathbb{R}^{2d}\times\mathbb{R}^{2d}$ a measurable set and $\varepsilon \geq 0$. If $W(f,g)$ is $\varepsilon$-concentrated on $U$ we get
\begin{center}
$\forall p > 2$ \hspace{0.5 cm} $\mu_{4d}(U) \geq \dfrac{1}{2^{4d}} (1-\varepsilon^{2})^{\frac{p}{p-2}} \bigg(C_{p,q}\bigg)^{\frac{2p}{2-p}}$.
\end{center}
\end{cor}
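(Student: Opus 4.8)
The plan is to reduce the statement to the Lieb uncertainty principle \eqref{16} already proved for $\mathcal{G}_{g}(f)$, exactly as the preceding corollary for $A(f,g)$ was deduced from it, but now accounting for the dilation carried by the relation \eqref{p2}, namely $|W(f,g)(x,w)| = 2^{2d}|\mathcal{G}_{\check{g}}(f)(2x,2w)|$. The new feature compared with the $A(f,g)$ case is that this is not a pointwise equality of moduli in the same variables: it carries both an amplitude factor $2^{2d}$ and a rescaling of the argument by $2$. Consequently the concentration set transported to $\mathcal{G}_{\check{g}}(f)$ will be a dilate of $U$, and the scaling lemma \eqref{18} must enter.

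First I would translate the hypothesis. Assume $W(f,g)$ is $\varepsilon$-concentrated on $U$ and set $V = \{(2x,2w)\,|\,(x,w)\in U\}$. Performing the change of variables $(x',w')=(2x,2w)$ in $\mathbb{R}^{4d}$, the Jacobian factor $2^{-4d}$ arising from $d\mu_{4d}(x,w) = 2^{-4d}\,d\mu_{4d}(x',w')$ cancels exactly against the factor $2^{4d}$ produced by squaring the amplitude in \eqref{p2}. Hence $\|W(f,g)\|_{2,4d} = \|\mathcal{G}_{\check{g}}(f)\|_{2,4d}$, and likewise the integral of $|W(f,g)|^{2}$ over $U^{c}$ equals the integral of $|\mathcal{G}_{\check{g}}(f)|^{2}$ over $V^{c}$. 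Therefore $\mathcal{G}_{\check{g}}(f)$ is $\varepsilon$-concentrated on $V$, with the very same $\varepsilon$.

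Next, observing that $\check{g}(x)=g(-x)$ is again a non-zero window with $\|\check{g}\|_{2,2d}=\|g\|_{2,2d}$, I would apply the Lieb uncertainty principle \eqref{16} to $\mathcal{G}_{\check{g}}(f)$ on the set $V$, which gives $C_{p,q}^{\frac{2p}{2-p}}(1-\varepsilon^{2})^{\frac{p}{p-2}} \leq \mu_{4d}(V)$ for every $p>2$. Finally, the scaling lemma \eqref{18}, applied in dimension $4d$ with $c=2$, yields $\mu_{4d}(V) = 2^{4d}\mu_{4d}(U)$. Dividing by $2^{4d}$ produces the claimed bound $\mu_{4d}(U) \geq \frac{1}{2^{4d}}(1-\varepsilon^{2})^{\frac{p}{p-2}}\big(C_{p,q}\big)^{\frac{2p}{2-p}}$.

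The main obstacle is purely the bookkeeping of the two competing powers of $2$: the $2^{2d}$ in the amplitude (hence $2^{4d}$ after squaring) against the $2^{4d}$ in the Jacobian. The delicate point is checking that these cancel in the $L^{2}$-norms, so that the concentration parameter $\varepsilon$ is transported unchanged to $\mathcal{G}_{\check{g}}(f)$; only the measure of the set retains a surviving factor $2^{4d}$, which is precisely what accounts for the $2^{-4d}$ in the conclusion and distinguishes this corollary from the $A(f,g)$ one.
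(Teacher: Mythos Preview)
Your proposal is correct and follows essentially the same approach as the paper: reduce to the Lieb uncertainty principle for $\mathcal{G}_{\check g}(f)$ via the relation \eqref{p2}, transport the concentration set $U$ to its dilate $V=\{(2x,2w):(x,w)\in U\}$, and then convert $\mu_{4d}(V)$ back to $\mu_{4d}(U)$ using the scaling lemma \eqref{18}. If anything, your write-up is more careful than the paper's in verifying that both the total $L^2$-norm and the $L^2$-norm over the complement transform identically, so that $\varepsilon$ is preserved.
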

%%%%%%%%%%%%%%%%%%%%%%%%%%%%%%%%%%%%%%%%%%%%%%%%%%%%%%%%%%%%%%%%%%%%%%%%%%%%%%%%%%%%%%%%%%%%%%%%%%%%%%%%%%%%%%%%%%%%%%%%%%%%%%%%%%%%%%%%%%%%%%%%%%%%%%%%%%%%%%%%%%%%
\begin{proof}
We have
$$\begin{tabular}{lllll}
$\|\chi_{U} W(f,g)\|_{2,4d}^{2}$ & $=$ & $\displaystyle\iint_{U} |W(f,g)(x,w)|^{2} d\mu_{4d}(x,w)$ & $=$ & $2^{4d} \displaystyle\iint_{U} |\mathcal{G}_{\check{g}}(f)(2x,2w)|^{2} d\mu_{4d}(x,w)$\\
& $=$ & $\displaystyle\iint_{V} |\mathcal{G}_{\check{g}}(f)(y,\lambda)|^{2} d\mu_{4d}(y,\lambda)$ & $=$ & $\|\chi_{V} \mathcal{G}_{\check{g}}(f)\|_{2,4d}^{2}$
\end{tabular}$$
where
$$V = \{(y,\lambda) \in \mathbb{R}^{2d}\times\mathbb{R}^{2d}|\big(\frac{y}{2},\frac{\lambda}{2}\big)\in U\} = \{(2x,2w)\in \mathbb{R}^{2d}\times\mathbb{R}^{2d}|(x,w)\in U\}$$
$W(f,g)$ is $\varepsilon$-concentrated on $U$ then $\mathcal{G}_{g}(f)$ is $\varepsilon$-concentrated on $V$ and
$$\mu_{4d}(V) \geq  (1-\varepsilon^{2})^{\frac{p}{p-2}} \bigg(C_{p,q}\bigg)^{\frac{2p}{2-p}}$$
using (\ref{18}) we get $\mu_{4d}(V) = 2^{4d} \mu_{4d}(U)$ hence
\begin{center}
$\mu_{4d}(U) \geq \dfrac{1}{2^{4d}} (1-\varepsilon^{2})^{\frac{p}{p-2}} \bigg(C_{p,q}\bigg)^{\frac{2p}{2-p}}$
\end{center}
\end{proof}
%%%%%%%%%%%%%%%%%%%%%%%%%%%%%%%%%%%%%%%%%%%%%%%%%%%%%%%%%%%%%%%%%%%%%%%%%%%%%%%%%%%%%%%%%%%%%%%%%%%%%%%%%%%%%%%%%%%%%%%%%%%%%%%%%%%%%%%%%%%%%%%%%%%%%%%%%%%%%%%%%%%%
\begin{thm}
Let $f$ , $g \in L^{2}(\mathbb{R}^{2d},\mathbb{H})$ , then
\begin{equation}\label{200}
\mu_{4d}(supp(\mathcal{G}_{g}(f))) \geq C_{p,q}^{\frac{2p}{2-p}}.
\end{equation}
\end{thm}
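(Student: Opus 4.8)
The plan is to view (\ref{200}) as nothing but the limiting case $\varepsilon = 0$ of the Lieb uncertainty principle (\ref{16}), applied to the distinguished set $U = supp(\mathcal{G}_{g}(f))$. Throughout I assume, as the statement implicitly requires, that $f \neq 0$ and $g \neq 0$; then the Plancherel identity (\ref{orth}) gives $\|\mathcal{G}_{g}(f)\|_{2,4d} = \|f\|_{2,2d}\|g\|_{2,2d} > 0$, so that $\mathcal{G}_{g}(f)$ is a genuinely nonzero element of $L^{2}(\mathbb{R}^{2d}\times\mathbb{R}^{2d},\mathbb{H})$ and its support carries positive measure.

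First I would set $U := supp(\mathcal{G}_{g}(f))$. By the very definition of the support, $\mathcal{G}_{g}(f)$ vanishes almost everywhere on $U^{c}$, so that
$$\left(\displaystyle\iint_{U^{c}} |\mathcal{G}_{g}(f)(x,w)|^{2}\, d\mu_{4d}(x,w)\right)^{\frac{1}{2}} = 0 = 0 \cdot \|\mathcal{G}_{g}(f)\|_{2,4d},$$
which is precisely the statement that $\mathcal{G}_{g}(f)$ is $\varepsilon$-concentrated on $U$ with $\varepsilon = 0$ (compare the remark following (\ref{14})). Next, fixing any $p > 2$ with $\frac{1}{p}+\frac{1}{q}=1$ and substituting $\varepsilon = 0$ into (\ref{16}), the factor $\left(1-\varepsilon^{2}\right)^{\frac{p}{p-2}}$ collapses to $1$, whence
$$C_{p,q}^{\frac{2p}{2-p}} \leq \mu_{4d}(U) = \mu_{4d}(supp(\mathcal{G}_{g}(f))),$$
which is exactly (\ref{200}). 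Since the exponent $p$ was arbitrary, the estimate holds for every admissible conjugate pair $(p,q)$.

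I do not expect any serious obstacle here; the argument is a direct specialization of the already-established Lieb bound, and the only step that deserves a word of care is the nondegeneracy hypothesis $f \neq 0$ (and $g \neq 0$). Indeed, were $f = 0$, then $\mathcal{G}_{g}(f) \equiv 0$, its support would be null, and the strictly positive constant $C_{p,q}^{\frac{2p}{2-p}}$ on the right-hand side would violate the claimed inequality. Thus the statement must be read under this nonvanishing assumption, and once that is in place the conclusion is immediate.
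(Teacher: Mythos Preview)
Your proposal is correct and follows essentially the same route as the paper: take $U = supp(\mathcal{G}_{g}(f))$, observe that $\mathcal{G}_{g}(f)$ is $0$-concentrated on $U$ since it vanishes on $U^{c}$, and then invoke the Lieb uncertainty principle (\ref{16}) with $\varepsilon = 0$. Your added remark about the implicit hypothesis $f \neq 0$, $g \neq 0$ is a worthwhile clarification that the paper omits.
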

%%%%%%%%%%%%%%%%%%%%%%%%%%%%%%%%%%%%%%%%%%%%%%%%%%%%%%%%%%%%%%%%%%%%%%%%%%%%%%%%%%%%%%%%%%%%%%%%%%%%%%%%%%%%%%%%%%%%%%%%%%%%%%%%%%%%%%%%%%%%%%%%%%%%%%%%%%%%%%%%%%%%%%%%%%%%%%%%%%%%%%%%%%%%%%%%%%%%%%%%%%%%%%%%%%%%%%%%%%%%%%%%%%%%%%%%%%%%%%%%%%%%%%
\begin{proof}
Because
\begin{center}
$\|\chi_{supp(G_{g}(f))^{c}}G_{g}f\|_{2,4d} = 0 \leq 0 \times \|\chi_{supp(G_{g}(f))}G_{g}f\|_{2,4d}$
\end{center}
then $G_{g}f$ is a $0 - concentrated$ on $supp(G_{g}f)$ then  an $\varepsilon = 0$ we get
\begin{center}
$\mu_{4d}(supp(\mathcal{G}_{g}(f))) \geq C_{p,q}^{\frac{2p}{2-p}}$
\end{center}
\end{proof}
%%%%%%%%%%%%%%%%%%%%%%%%%%%%%%%%%%%%%%%%%%%%%%%%%%%%%%%%%%%%%%%%%%%%%%%%%%%%%%%%%%%%%%%%%%%%%%%%%%%%%%%%%%%%%%%%%%%%%%%%%%%%%%%%%%%%%%%%%%%%%%%%%%%%%%%%%%%%%%%%%%%%%%%%%%%%%%%%%%%%%%%%%%%%%%%%%%%%%%%%%%%%%%%%%%%%%%%%%%%%%%%%%%%%%%%%%%%%%%%%%%%%%%
\begin{cor}
Let $f$ , $g \in L^{2}(\mathbb{R}^{2d},\mathbb{H})$ , then
\begin{center}
$\mu_{4d}(supp(A(f,g))) \geq C_{p,q}^{\frac{2p}{2-p}}$.
\end{center}
\end{cor}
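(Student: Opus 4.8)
The plan is to reduce the support estimate for the ambiguity function $A(f,g)$ to the already-established support estimate for the windowed Fourier transform $\mathcal{G}_{g}(f)$ given in \eqref{200}. The crucial observation is relation \eqref{p1}, namely $|A(f,g)(x,w)| = |\mathcal{G}_{g}(f)(x,w)|$, which holds pointwise for every $(x,w) \in \mathbb{R}^{2d}\times\mathbb{R}^{2d}$.

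First I would note that, since the moduli of $A(f,g)$ and $\mathcal{G}_{g}(f)$ coincide at every point, the two functions share exactly the same support:
\begin{equation*}
supp(A(f,g)) = supp(\mathcal{G}_{g}(f)).
\end{equation*}
Indeed, $A(f,g)(x,w) = 0$ if and only if $|A(f,g)(x,w)| = 0$, which by \eqref{p1} happens if and only if $|\mathcal{G}_{g}(f)(x,w)| = 0$, i.e. if and only if $\mathcal{G}_{g}(f)(x,w) = 0$. Taking closures of the complements of the zero sets yields the equality of supports.

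Once this set equality is in hand, the conclusion is immediate: applying the preceding theorem \eqref{200} to $\mathcal{G}_{g}(f)$ gives
\begin{equation*}
\mu_{4d}(supp(A(f,g))) = \mu_{4d}(supp(\mathcal{G}_{g}(f))) \geq C_{p,q}^{\frac{2p}{2-p}},
\end{equation*}
which is precisely the desired inequality.

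There is essentially no obstacle here; the work has already been done in proving \eqref{200}, and the modulus identity \eqref{p1} transfers it verbatim. The only point requiring a moment's care is the justification that equal moduli force equal supports, but this is a routine set-theoretic argument as indicated above. An alternative, fully parallel route would be to invoke the Corollary giving the Lieb-type bound for $A(f,g)$ directly with $\varepsilon = 0$, exactly as \eqref{200} was obtained from the $\varepsilon$-concentration theorem by taking $A(f,g)$ to be trivially $0$-concentrated on its own support; both approaches deliver the same constant.
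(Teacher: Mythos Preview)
Your proposal is correct and follows exactly the paper's approach: the paper's entire proof reads ``By relation \eqref{p1} we get $supp(\mathcal{G}_{g}f) = supp(A(f,g))$,'' after which the result follows from \eqref{200}. You have simply spelled out the same argument with more care.
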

%%%%%%%%%%%%%%%%%%%%%%%%%%%%%%%%%%%%%%%%%%%%%%%%%%%%%%%%%%%%%%%%%%%%%%%%%%%%%%%%%%%%%%%%%%%%%%%%%%%%%%%%%%%%%%%%%%%%%%%%%%%%%%%%%%%%%%%%%%%%%%%%%%%%%%%%%%%%%%%%%%%%%%%%%%%%%%%%%%%%%%%%%%%%%%%%%%%%%%%%%%%%%%%%%%%%%%%%%%%%%%%%%%%%%%%%%%%%%%%%%%%%%
\begin{proof}
By relation (\ref{p1}) we get $supp(G_{g}f)) = supp(A(f,g))$ .
\end{proof}
%%%%%%%%%%%%%%%%%%%%%%%%%%%%%%%%%%%%%%%%%%%%%%%%%%%%%%%%%%%%%%%%%%%%%%%%%%%%%%%%%%%%%%%%%%%%%%%%%%%%%%%%%%%%%%%%%%%%%%%%%%%%%%%%%%%%%%%%%%%%%%%%%%%%%%%%%%%%%%%%%%%%%%%%%%%%%%%%%%%%%%%%%%%%%%%%%%%%%%%%%%%%%%%%%%%%%%%%%%%%%%%%%%%%%%%%%%%%%%%%%%%%%%
\begin{cor}
Let $f$ , $g \in L^{2}(\mathbb{R}^{2d},\mathbb{H})$ , then
\begin{center}
$\mu_{4d}(supp(W(f,g))) \geq \dfrac{1}{2^{4d}} \bigg(C_{p,q}\bigg)^{\frac{2p}{2-p}} $.
\end{center}
\end{cor}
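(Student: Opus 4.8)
The plan is to recognize that this corollary is an immediate specialization of the $\varepsilon$-concentration bound for $W(f,g)$ proved just above, taken at $\varepsilon = 0$, exactly mirroring how the support statement for $\mathcal{G}_g(f)$ in (\ref{200}) was obtained from the Lieb uncertainty principle. First I would observe that, by the very definition of the support, the restriction of $W(f,g)$ to the complement of $supp(W(f,g))$ vanishes, so that
\[
\|\chi_{supp(W(f,g))^{c}} W(f,g)\|_{2,4d} = 0 \leq 0 \times \|\chi_{supp(W(f,g))} W(f,g)\|_{2,4d}.
\]
Hence $W(f,g)$ is $\varepsilon$-concentrated on $U = supp(W(f,g))$ with $\varepsilon = 0$, and the preceding corollary on the $\varepsilon$-concentration of $W(f,g)$ applies directly, giving for every $p > 2$
\[
\mu_{4d}(supp(W(f,g))) \geq \dfrac{1}{2^{4d}} (1 - 0)^{\frac{p}{p-2}} \bigg(C_{p,q}\bigg)^{\frac{2p}{2-p}} = \dfrac{1}{2^{4d}} \bigg(C_{p,q}\bigg)^{\frac{2p}{2-p}},
\]
which is the claimed bound.

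Alternatively, I would unwind the argument at the level of supports, which makes the factor $2^{4d}$ transparent. By relation (\ref{p2}) we have $|W(f,g)(x,w)| = 2^{2d}|\mathcal{G}_{\check{g}}(f)(2x,2w)|$, so $W(f,g)(x,w) \neq 0$ precisely when $\mathcal{G}_{\check{g}}(f)(2x,2w) \neq 0$. Consequently $supp(\mathcal{G}_{\check{g}}(f)) = \{(2x,2w) : (x,w) \in supp(W(f,g))\}$, i.e.\ the dilate of $supp(W(f,g))$ by the factor $c = 2$ in $\mathbb{R}^{2d}\times\mathbb{R}^{2d}$. The scaling lemma (\ref{18}), applied in dimension $4d$ with $c = 2$, then yields $\mu_{4d}(supp(\mathcal{G}_{\check{g}}(f))) = 2^{4d}\,\mu_{4d}(supp(W(f,g)))$. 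Finally, the support theorem (\ref{200}), used with the admissible window $\check{g}$, gives $\mu_{4d}(supp(\mathcal{G}_{\check{g}}(f))) \geq C_{p,q}^{\frac{2p}{2-p}}$, and dividing by $2^{4d}$ reproduces the inequality.

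I do not expect any genuine analytic obstacle here, since every nontrivial ingredient — the Lieb inequality, the support bound for $\mathcal{G}_g(f)$, the scaling behaviour of $\mu_{4d}$, and the intertwining relation (\ref{p2}) between $W$ and $\mathcal{G}_{\check{g}}$ — has already been established. The only point demanding care is the bookkeeping of the dilation: one must verify that scaling each of the $4d$ coordinates by $2$ contributes exactly the factor $2^{4d}$, so that the constant matches the one in the $\varepsilon$-concentration corollary and no spurious power of $2^{2d}$ (coming from the prefactor in (\ref{p2})) survives after passing from amplitudes to supports, where constants are irrelevant.
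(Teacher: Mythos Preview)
Your proposal is correct. Your alternative argument (relating $supp(W(f,g))$ to $supp(\mathcal{G}_{\check{g}}(f))$ via (\ref{p2}) and then applying the scaling lemma (\ref{18}) together with (\ref{200})) is exactly the route the paper takes; you are in fact slightly more careful, since you correctly write $\check{g}$ where the paper's proof has a typo with $g$. Your primary argument, specializing the $\varepsilon$-concentration corollary for $W(f,g)$ to $\varepsilon=0$, is a different but equally valid shortcut that mirrors the derivation of (\ref{200}) and spares you the explicit support/scaling bookkeeping.
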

%%%%%%%%%%%%%%%%%%%%%%%%%%%%%%%%%%%%%%%%%%%%%%%%%%%%%%%%%%%%%%%%%%%%%%%%%%%%%%%%%%%%%%%%%%%%%%%%%%%%%%%%%%%%%%%%%%%%%%%%%%%%%%%%%%%%%%%%%%%%%%%%%%%%%%%%%%%%%%%%%%%%%%%%%%%%%%%%%%%%%%%%%%%%%%%%%%%%%%%%%%%%%%%%%%%%%%%%%%%%%%%%%%%%%%%%%%%%%%%%%%%%%%
\begin{proof}
Let $(x,w) \in supp(W(f,g))$ then $G_{g}f(2x,2w) \neq 0$, and consequently by (\ref{18}) , we get
\begin{center}
$\mu_{4d}(supp(W(f,g))) = \dfrac{1}{2^{4d}} \mu_{4d}(supp(G_{g}f))$
\end{center}
then by (\ref{200}) ,
\begin{center}
$\mu_{4d}(supp(G_{g}f))) \geq \dfrac{1}{2^{4d}} \bigg(C_{p,q}\bigg)^{\frac{2p}{2-p}} $.
\end{center}
\end{proof}

%%%%%%%%%%%%%%%%%%%%%%%%%%%%%%%%%%%%%%%%%%%%%%%%%%%%%%%%%%%%%%%%%%%%%%%%%%%%%%%%%%%%%%%%%%%%%%%%%%%%%%%%%%%%%%%%%%%%%%%%%%%%%%%%%%%%%%%%%%%%%%%%%%%%%%%%%%%%%%%%%%%%%%%%%%%%%%%%%%%%%%%%%%%%%%%%%%%%%%%%%
%%%%%%%%%%%%%%%%%%%%%%%%%%%%%%%%%%%%%%%%%%%%%%%%%%%%%%%%%%%%%%%%%%%%%%%%%%%%%%%%%%%%%%%%%%%%%%%%%%%%%%%%%%%%%%%%%%%%%%%%%%%%%%%%%%%%%%%%%%%%%%%%%%%%%%%%%%%%%%%%%%%%%%%%%%%%%%%%%%%%%%%%%%%%%%%%%%%%%%%%%%%%
\subsection{Logarithmic uncertainty principle for $\mathcal{G}_{\psi}$}
In this subsection we used  the theorem \ref{logarithm}  to obtain the Logarithmic uncertainty principle for the multivariate two sided QWFT.
%%%%%%%%%%%%%%%%%%%%%%%%%%%%%%%%%%%%%%%%%%%%%%%%%%%%%%%%%%%%%%%%%%%%%%%%%%%%%%%%%%%%%%%%%%%%%%%%%%%%%%%%%%%%%%%%%%%%%%%%%%%%%%%%%%%%%%%%%%%%%%%%%%%%%%%%%%%%%%%%%%%%
%%%%%%%%%%%%%%%%%%%%%%%%%%%%%%%%%%%%%%%%%%%%%%%%%%%%%%%%%%%%%%%%%%%%%%%%%%%%%%%%%%%%%%%%%%%%%%%%%%%%%%%%%%%%%%%%%%%%%%%%%%%%%%%%%%%%%%%%%%%%%%%%%%%%%%%%%%%%%%%%%%%%
\begin{thm} (Logarithmic uncertainty principle for the WQFT)\\
Let $f$, $g$ $\in \mathcal{S}(\mathbb{R}^{2d},\mathbb{H})$ then\\
$ D_{2d} \|f\|_{2,2d}^{2} \|g\|_{2,2d}^{2} \leq$
\begin{equation}\label{logarithmic}
\displaystyle\iint_{\mathbb{R}^{2d}\times\mathbb{R}^{2d}} ln|w| |\mathcal{G}_{g}(f)(x,w)|^{2} d\mu_{2d}(x)\hspace{0.1 cm} d\mu_{2d}(w) + \|g\|_{2,2d}^{2} \displaystyle\int_{\mathbb{R}^{2d}} ln|t| |f(t)|^{2} d\mu_{2d}(t),
\end{equation}
where $D_{2d}$ is given by (\ref{Dd}).
\end{thm}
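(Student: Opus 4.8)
The plan is to reduce the windowed statement to the already-proved Logarithmic uncertainty principle for $\mathcal{F}_Q$ (Theorem \ref{logarithm}) by exploiting relation (\ref{wq}), namely $\mathcal{G}_{g}(f)(x,w) = \mathcal{F}_{Q}(f\overline{T_{x}g})(w)$. The idea is that for each fixed $x$, the function $t \mapsto f(t)\overline{g(t-x)}$ lies in $\mathcal{S}(\mathbb{R}^{2d},\mathbb{H})$ (since $f,g \in \mathcal{S}$), so we may apply (\ref{loga}) to it and then integrate the resulting inequality over $x \in \mathbb{R}^{2d}$ with respect to $d\mu_{2d}(x)$.

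First I would fix $x$ and write Theorem \ref{logarithm} applied to $h_x := f\,\overline{T_x g}$:
\begin{equation*}
\int_{\mathbb{R}^{2d}} \ln|t|\,|f(t)\overline{g(t-x)}|^{2}\,d\mu_{2d}(t) + \int_{\mathbb{R}^{2d}} \ln|w|\,|\mathcal{G}_{g}(f)(x,w)|^{2}\,d\mu_{2d}(w) \geq D_{2d}\int_{\mathbb{R}^{2d}} |f(t)\overline{g(t-x)}|^{2}\,d\mu_{2d}(t),
\end{equation*}
using that $|\mathcal{F}_Q(h_x)(w)| = |\mathcal{G}_g(f)(x,w)|$ by (\ref{wq}). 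Then I would integrate both sides in $x$. On the right-hand side, Fubini gives $D_{2d}\iint |f(t)|^2|g(t-x)|^2 d\mu_{2d}(t)d\mu_{2d}(x) = D_{2d}\|f\|_{2,2d}^2\|g\|_{2,2d}^2$ after the substitution $\eta = t-x$ in the inner integral. The middle term integrates directly to the double integral $\iint \ln|w|\,|\mathcal{G}_g(f)(x,w)|^2$ appearing in (\ref{logarithmic}). The first term becomes $\iint \ln|t|\,|f(t)|^2|g(t-x)|^2\,d\mu_{2d}(t)d\mu_{2d}(x)$, which by Fubini and the substitution $x \mapsto t-\eta$ equals $\|g\|_{2,2d}^2 \int \ln|t|\,|f(t)|^2\,d\mu_{2d}(t)$, exactly the last term of (\ref{logarithmic}). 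Collecting the three pieces yields the claimed inequality.

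The main obstacle is justifying the interchange of integration (Fubini-Tonelli) and, more subtly, the integrability of the term $\int \ln|t|\,|f(t)\overline{g(t-x)}|^2 d\mu_{2d}(t)$, since $\ln|t|$ is negative near the origin and the pointwise inequality could fail to integrate cleanly if either side were $-\infty$ on a large set. Because $f,g \in \mathcal{S}(\mathbb{R}^{2d},\mathbb{H})$, the products decay rapidly and $\ln|t|\,|f(t)|^2|g(t-x)|^2$ is absolutely integrable over $\mathbb{R}^{2d}\times\mathbb{R}^{2d}$ (the logarithmic singularity at the origin is integrable in dimension $2d \geq 2$), so Tonelli applies to the positive and negative parts separately and the interchange is legitimate. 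I would also note that $h_x \in \mathcal{S}(\mathbb{R}^{2d},\mathbb{H})$ for each fixed $x$, which is what licenses the application of Theorem \ref{logarithm} inside the integral; this follows since the Schwartz class is stable under multiplication and translation.
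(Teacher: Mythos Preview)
Your proposal is correct and follows essentially the same approach as the paper: apply the logarithmic uncertainty principle for $\mathcal{F}_Q$ (Theorem~\ref{logarithm}) to $f\overline{T_x g}\in\mathcal{S}(\mathbb{R}^{2d},\mathbb{H})$ for each fixed $x$, then integrate in $x$ and simplify via Fubini and the substitution $\eta=t-x$. Your treatment is in fact more careful than the paper's, since you explicitly justify the Fubini step and the integrability of the $\ln|t|$ term, which the paper passes over in silence.
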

%%%%%%%%%%%%%%%%%%%%%%%%%%%%%%%%%%%%%%%%%%%%%%%%%%%%%%%%%%%%%%%%%%%%%%%%%%%%%%%%%%%%%%%%%%%%%%%%%%%%%%%%%%%%%%%%%%%%%%%%%%%%%%%%%%%%%%%%%%%%%%%%%%%%%%%%%%%%%%%%%%%%
\begin{proof}
Notice that $f$, $g \in \mathcal{S}(\mathbb{R}^{2d},\mathbb{H})$, this implies that $f\overline{T_{x}g} \in \mathcal{S}(\mathbb{R}^{2d},\mathbb{H})$ . Therefore, we may replace $f$ by $f\overline{T_{x}g}$ on both side of (\ref{loga}) and get
$$\displaystyle\int_{\mathbb{R}^{2d}} ln|w| |\mathcal{F}_{Q}(f\overline{T_{x}g})(w)|^{2} d\mu_{2d}(w) + \displaystyle\int_{\mathbb{R}^{2d}} ln|t| |f\overline{T_{x}g(t)}|^{2} d\mu_{2d}(t) \geq D \displaystyle\int_{\mathbb{R}^{2d}} |f\overline{T_{x}g}(t)|^{2}d\mu_{2d}(t)$$
integration both sides of this equation with respect to $d\mu_{2d}(x)$ yields
\begin{center}
$\displaystyle\iint_{\mathbb{R}^{2d}\times\mathbb{R}^{2d}} ln|w| |\mathcal{F}_{Q}(f\overline{T_{x}g})(w)|^{2} d\mu_{2d}(w)\hspace{0.1 cm} d\mu_{2d}(x) + \displaystyle\iint_{\mathbb{R}^{2d}\times\mathbb{R}^{2d}} ln|t| |f\overline{T_{x}g}(t)|^{2} d\mu_{2d}(t)\hspace{0.1 cm} d\mu_{2d}(x)$\\
$ \geq D \displaystyle\iint_{\mathbb{R}^{2d}\times\mathbb{R}^{2d}} |f\overline{T_{x}g}(t)|^{2}d\mu_{2d}(t)\hspace{0.1 cm} d\mu_{2d}(x)$
\end{center}
we obtain
\begin{center}
$\displaystyle\iint_{\mathbb{R}^{2d}\times\mathbb{R}^{2d}} ln|w| |\mathcal{G}_{g}(f)(x,w)|^{2} d\mu_{2d}(w)\hspace{0.1 cm} d\mu_{2d}(x) + \displaystyle\iint_{\mathbb{R}^{2d}\times\mathbb{R}^{2d}} ln|t| |f(t)|^{2} |g(t-x)|^{2} d\mu_{2d}(t)\hspace{0.1 cm} d\mu_{2d}(x)$\\ $\geq D \displaystyle\int_{\mathbb{R}^{2d}} |f(t)|^{2} d\mu_{2d}(t) \displaystyle\int_{\mathbb{R}^{2d}} |g(x)|^{2} d\mu_{2d}(x)$
\end{center}
finally, we have
$$\displaystyle\iint_{\mathbb{R}^{2d}\times\mathbb{R}^{2d}} ln|w| |\mathcal{G}_{g}(f)(x,w)|^{2} d\mu_{2d}(x)\hspace{0.1 cm} d\mu_{2d}(w) + \|g\|_{2,2d}^{2} \displaystyle\int_{\mathbb{R}^{2d}} ln|t| |f(t)|^{2} d\mu_{2d}(t) \geq D\hspace{0.1 cm} \|f\|_{2,2d}^{2} \|g\|_{2,2d}^{2}.$$
\end{proof}

%%%%%%%%%%%%%%%%%%%%%%%%%%%%%%%%%%%%%%%%%%%%%%%%%%%%%%%%%%%%%%%%%%%%%%%%%%%%%%%%%%%%%%%%%%%%%%%%%%%%%%%%%%%%%%%%%%%%%%%%%%%%%%%%%%%%%%%%%%%%%%%%%%%%%%%%%%%%%%%%%%%%
\begin{cor}
Let $f$, $g$ $\in \mathcal{S}(\mathbb{R}^{2d},\mathbb{H})$ then
$$\displaystyle\iint_{\mathbb{R}^{2d}\times\mathbb{R}^{2d}} ln|w| |A(f,g)(x,w)|^{2} d\mu_{2d}(x)\hspace{0.1 cm} d\mu_{2d}(w) + \|g\|_{2,2d}^{2} \displaystyle\int_{\mathbb{R}^{2d}} ln|t| |f(t)|^{2} d\mu_{2d}(t) \geq D\hspace{0.1 cm} \|f\|_{2,2d}^{2} \|g\|_{2,2d}^{2}.$$
\end{cor}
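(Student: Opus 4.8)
The plan is to deduce this corollary directly from the Logarithmic uncertainty principle for the QWFT established just above in (\ref{logarithmic}), using the pointwise identification of the radar quaternion ambiguity function with the QWFT.

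First I would recall the Lemma relating the two transforms, which gives $A(f,g)(x,w) = e^{iw_{1}\frac{x_{1}}{2}}\,\mathcal{G}_{g}(f)(x,w)\,e^{jw_{2}\frac{x_{2}}{2}}$ and, in particular, the modulus identity (\ref{p1}): $|A(f,g)(x,w)| = |\mathcal{G}_{g}(f)(x,w)|$ for every $(x,w) \in \mathbb{R}^{2d}\times\mathbb{R}^{2d}$. Squaring and using that the modulus is multiplicative on $\mathbb{H}$ together with $|e^{iw_{1}x_{1}/2}| = |e^{jw_{2}x_{2}/2}| = 1$, I obtain $|A(f,g)(x,w)|^{2} = |\mathcal{G}_{g}(f)(x,w)|^{2}$ pointwise.

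Next I would substitute this equality into the weighted double integral of the corollary. Since the weight $\ln|w|$ and the product measure $d\mu_{2d}(x)\,d\mu_{2d}(w)$ are left untouched, the ambiguity-function term coincides exactly with the QWFT term of (\ref{logarithmic}):
\[
\iint_{\mathbb{R}^{2d}\times\mathbb{R}^{2d}} \ln|w|\,|A(f,g)(x,w)|^{2}\,d\mu_{2d}(x)\,d\mu_{2d}(w) = \iint_{\mathbb{R}^{2d}\times\mathbb{R}^{2d}} \ln|w|\,|\mathcal{G}_{g}(f)(x,w)|^{2}\,d\mu_{2d}(x)\,d\mu_{2d}(w).
\]
The two remaining terms, $\|g\|_{2,2d}^{2}\int_{\mathbb{R}^{2d}} \ln|t|\,|f(t)|^{2}\,d\mu_{2d}(t)$ and $D\,\|f\|_{2,2d}^{2}\|g\|_{2,2d}^{2}$, are identical in both statements, so the desired inequality is nothing more than (\ref{logarithmic}) rewritten for $A(f,g)$, valid since $f,g \in \mathcal{S}(\mathbb{R}^{2d},\mathbb{H})$.

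There is no genuine obstacle in this argument: its entire substance is the modulus identity (\ref{p1}), after which the corollary restates the QWFT logarithmic principle verbatim. The only point deserving a line of justification is that left and right multiplication of $\mathcal{G}_{g}(f)$ by the unit-modulus quaternion exponentials preserves the modulus, which is exactly the property $|pq| = |p||q|$ recorded in Section 2.
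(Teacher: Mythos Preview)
Your proposal is correct and is exactly the argument the paper has in mind: the corollary is stated without proof precisely because it follows at once from the modulus identity (\ref{p1}) combined with the logarithmic inequality (\ref{logarithmic}) for $\mathcal{G}_{g}(f)$, which is what you spell out.
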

%%%%%%%%%%%%%%%%%%%%%%%%%%%%%%%%%%%%%%%%%%%%%%%%%%%%%%%%%%%%%%%%%%%%%%%%%%%%%%%%%%
%%%%%%%%%%%%%%%%%%%%%%%%%%%%%%%%%%%%%%%%%%%%%%%%%%%%%%%%%%%%%%%%%%%%%%%%%%%%%%%%%%%%%%%%%%%%%%%%%%%%%%%%%%%%%%%%%%%%%%%%%%%%%%%%%%%%%%%%%%%%%%%%%%%%%%%%%%%%%%%%%%%%%%%%%%%%%%%%%%%%%%%%%%%%%%%%%%%%%%%%%%
\begin{cor}
Let $f$, $g$ $\in \mathcal{S}(\mathbb{R}^{2d},\mathbb{H})$ then
$$\displaystyle\iint_{\mathbb{R}^{2d}\times\mathbb{R}^{2d}} ln|w| |W(f,g)(x,w)|^{2} d\mu_{2d}(x)\hspace{0.1 cm} d\mu_{2d}(w) + \|g\|_{2,2d}^{2} \displaystyle\int_{\mathbb{R}^{2d}} ln|t| |f(t)|^{2} d\mu_{2d}(t) \geq  \big(D - ln(2)\big) \hspace{0.1 cm} \|f\|_{2,2d}^{2} \|g\|_{2,2d}^{2}.$$
\end{cor}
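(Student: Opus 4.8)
The plan is to reduce the claim to the logarithmic uncertainty principle for the QWFT, inequality (\ref{logarithmic}), by invoking the pointwise relation (\ref{p2}) between the Wigner transform and the windowed Fourier transform of the reflected window $\check{g}$. Since $f,g\in\mathcal{S}(\mathbb{R}^{2d},\mathbb{H})$ forces $\check{g}\in\mathcal{S}(\mathbb{R}^{2d},\mathbb{H})$ as well, every integral appearing below is finite and Fubini's theorem applies without difficulty.

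First I would substitute $|W(f,g)(x,w)|^{2}=2^{4d}|\mathcal{G}_{\check{g}}(f)(2x,2w)|^{2}$ into the leftmost integral and perform the dilation change of variables $y=2x$, $\lambda=2w$. By the scaling of the normalized measure recorded in Lemma (\ref{18}) one has $d\mu_{2d}(x)=2^{-2d}\,d\mu_{2d}(y)$ and $d\mu_{2d}(w)=2^{-2d}\,d\mu_{2d}(\lambda)$, so the prefactor $2^{4d}$ cancels exactly. The essential point is the factorization $ln|w|=ln|\lambda/2|=ln|\lambda|-ln 2$, which gives
\[
\iint_{\mathbb{R}^{2d}\times\mathbb{R}^{2d}} ln|w|\,|W(f,g)(x,w)|^{2}\,d\mu_{2d}(x)\,d\mu_{2d}(w)=\iint_{\mathbb{R}^{2d}\times\mathbb{R}^{2d}} \big(ln|\lambda|-ln 2\big)\,|\mathcal{G}_{\check{g}}(f)(y,\lambda)|^{2}\,d\mu_{2d}(y)\,d\mu_{2d}(\lambda).
\]

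Next I would treat the two resulting pieces separately. The term carrying $-ln 2$ is evaluated by Plancherel's theorem (\ref{orth}), which yields $\|f\|_{2,2d}^{2}\|\check{g}\|_{2,2d}^{2}=\|f\|_{2,2d}^{2}\|g\|_{2,2d}^{2}$, since reflection preserves the $L^{2}$-norm. To the remaining term $\iint ln|\lambda|\,|\mathcal{G}_{\check{g}}(f)(y,\lambda)|^{2}$ I would apply the logarithmic uncertainty principle for the QWFT, inequality (\ref{logarithmic}), with the window $\check{g}$ in place of $g$; using $\|\check{g}\|_{2,2d}=\|g\|_{2,2d}$ once more, this bounds that term below by $D_{2d}\|f\|_{2,2d}^{2}\|g\|_{2,2d}^{2}-\|g\|_{2,2d}^{2}\int ln|t|\,|f(t)|^{2}\,d\mu_{2d}(t)$. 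Adding back the $ln 2$ contribution, transposing the $\int ln|t|\,|f(t)|^{2}$ term to the left-hand side, and collecting the $\|f\|_{2,2d}^{2}\|g\|_{2,2d}^{2}$ factors produces exactly the asserted bound with constant $D_{2d}-ln 2$, where $D_{2d}$ is the constant (\ref{Dd}).

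The argument is essentially bookkeeping once (\ref{p2}) is invoked; the only delicate point is tracking the $ln 2$ correction, which hinges on the precise scaling of the normalized measure $d\mu_{2d}$ under $(x,w)\mapsto(2x,2w)$ and on the clean logarithmic splitting. This is the step where an incorrect constant would be most likely to creep in.
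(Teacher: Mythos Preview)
Your proposal is correct and follows essentially the same approach as the paper: invoke the pointwise relation (\ref{p2}), perform the dilation $(y,\lambda)=(2x,2w)$ so that the factor $2^{4d}$ cancels against the Jacobian, split $\ln|w|=\ln|\lambda|-\ln 2$, evaluate the $\ln 2$ piece via Plancherel (\ref{orth}), and apply the QWFT logarithmic inequality (\ref{logarithmic}) with window $\check g$. If anything, your version is slightly more careful in explicitly noting $\check g\in\mathcal{S}(\mathbb{R}^{2d},\mathbb{H})$ and $\|\check g\|_{2,2d}=\|g\|_{2,2d}$, which the paper leaves implicit.
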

%%%%%%%%%%%%%%%%%%%%%%%%%%%%%%%%%%%%%%%%%%%%%%%%%%%%%%%%%%%%%%%%%%%%%%%%%%%%%%%%%%%%%%%%%%%%%%%%%%%%%%%%%%%%%%%%%%%%%%%%%%%%%%%%%%%%%%%%%%%%%%%%%%%%%%%%%%%%%%%%%%%%%%%%%%%%%%%%%%%%%%%%%%%%%%%%%%%%%%%
\begin{proof}
We have
$$\begin{tabular}{lll}
$\displaystyle\int_{\mathbb{R}^{2d}\times\mathbb{R}^{2d}} ln|w| |W(f,g)(x,w)|^{2} d\mu_{2d}(x) d\mu_{2d}(w)$ & $=$ & $\displaystyle\int_{\mathbb{R}^{2d}\times\mathbb{R}^{2d}} 2^{4d} ln|w| |\mathcal{G}_{\check{g}}(2x,2w)|^{2} d\mu_{2d}(x) d\mu_{2d}(w)$\\
& $=$ & $\displaystyle\int_{\mathbb{R}^{2d}\times\mathbb{R}^{2}} ln\bigg|\frac{\sigma}{2}\bigg| |\mathcal{G}_{\check{g}}(f)(y,\sigma)|^{2} d\mu_{2d}(y) d\mu_{2d}(\sigma)$\\
& $=$ & $\displaystyle\int_{\mathbb{R}^{2d}\times\mathbb{R}^{2d}} ln|\sigma| |\mathcal{G}_{\check{g}}(f)(y,\sigma)|^{2} d\mu_{2d}(y) d\mu_{2d}(\sigma) $\\
& $-$ & $ \displaystyle\int_{\mathbb{R}^{2d}\times\mathbb{R}^{2d}} ln(2) |\mathcal{G}_{\check{g}}(f)(y,\sigma)|^{2} d\mu_{2d}(y) d\mu_{2d}(\sigma)$\\
& $=$ & $\displaystyle\int_{\mathbb{R}^{2d}\times\mathbb{R}^{2d}} ln|\sigma| |\mathcal{G}_{\check{g}}(f)(y,\sigma)|^{2} d\mu_{2d}(y) d\mu_{2d}(\sigma)$\\
& $ - $ & $ ln(2) \|f\|_{2,2d}^{2}  \|g\|_{2,2d}^{2}.$
\end{tabular}$$
Using the inequality
$$\displaystyle\iint_{\mathbb{R}^{2d}\times\mathbb{R}^{2d}} ln|w| |\mathcal{G}_{g}(f)(x,w)|^{2} d\mu_{2d}(x)\hspace{0.1 cm} d\mu_{2d}(w) + \|g\|_{2,2d}^{2} \displaystyle\int_{\mathbb{R}^{2d}} ln|t| |f(t)|^{2} d\mu_{2d}(t) \geq D\hspace{0.1 cm} \|f\|_{2,2d}^{2} \|g\|_{2,2d}^{2}.$$
we get
$$\displaystyle\iint_{\mathbb{R}^{2d}\times\mathbb{R}^{2d}} ln|w| |W(f,g)(x,w)|^{2} d\mu_{2d}(x)\hspace{0.1 cm} d\mu_{2d}(w) + \|g\|_{2,2d}^{2} \displaystyle\int_{\mathbb{R}^{2d}} ln|t| |f(t)|^{2} d\mu_{2d}(t) \geq (D - ln(2))\hspace{0.1 cm} \|f\|_{2,2d}^{2} \|g\|_{2,2d}^{2}.$$
\end{proof}

%%%%%%%%%%%%%%%%%%%%%%%%%%%%%%%%%%%%%%%%%%%%%%%%%%%%%%%%%%%%%%%%%%%%%%%%%%%%%%%%%%%%%%%%%%%%%%%%%%%%%%%%%%%%%%%%%%%%%%%%%%%%%%%%%%%%%%%%%%%%%%%%%%%%%%%%%%%%%%%%%%%%%%%%%%%%%%%%%%%%%%%%%%%%%%%%%%%%%%%%%%
%%%%%%%%%%%%%%%%%%%%%%%%%%%%%%%%%%%%%%%%%%%%%%%%%%%%%%%%%%%%%%%%%%%%%%%%%%%%%%%%%%%%%%%%%%%%%%%%%%%%%%%%%%%%%%%%%%%%%%%%%%%%%%%%%%%%%%%%%%%%%%%%%%%%%%%%%%%%%%%%%%%%%%%%%%%%%%%%%%%%%%%%%%%%%%%%%%%%%%%%%
\subsection{The Beckner's uncertainty principle in terms of entropy for $\mathcal{G}_{\psi}$}
Clearly the entropy represents an advantageous way to measure the decay of a function $f$, so that it was very interesting to localize the entropy of a probability measure and its quaternion Fourier transform.
\begin{defn} (Entropy)\\
The entropy of a probability density function $P$ on $\mathbb{R}^{2d}\times\mathbb{R}^{2d}$ is defined by
$$E(P) = - \displaystyle\iint_{\mathbb{R}^{2d}\times\mathbb{R}^{2d}} ln(P(x,w)) P(x,w) d\mu_{4d}(x,w).$$
\end{defn}
The aim of the following is to generalize the localization  of the entropy to the QWFT over the quaternion windowed plane, the quaternion radar ambiguity function and the quaternion Wigner-Vile transform.
%%%%%%%%%%%%%%%%%%%%%%%%%%%%%%%%%%%%%%%%%%%%%%%%%%%%%%%%%%%%%%%%%%%%%%%%%%%%%%%%%%%%%%%%%%%%%%%%%%%%%%%%%%%%%%%%%%%%%%%%%%%%%%%%%%%%%%%%%%%%%%%%%%%%%%%%%%%%%%%%%%%%
\begin{thm}
Let $g$ be a quaternionic windowed function and $f \in L^{2}(\mathbb{R}^{2d},\mathbb{H})$ with $f, g \neq 0,$  then
\begin{equation}\label{10}
E(|\mathcal{G}_{g}(f)|^{2}) \geq \|f\|_{2,2d}^{2} \|g\|_{2,2d}^{2}(2\hspace{0.1 cm}d\hspace{0.1 cm}ln(2) - ln(\|f\|_{2,2d}^{2} \|g\|_{2,2d}^{2})) .
\end{equation}
\end{thm}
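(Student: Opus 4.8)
The plan is to run the standard Beckner entropy argument: the entropy of $|\mathcal{G}_{g}(f)|^{2}$ is, up to a factor, the derivative at $p=2$ of the $L^{p}$-quantity controlled by the Lieb inequality, and the Lieb bound together with Plancherel's theorem forces a one-sided derivative inequality at $p=2$. Concretely, I would set, for $p\geq 2$ and $q$ the conjugate exponent,
$$\Phi(p)=C_{p,q}^{\,p}\,\|f\|_{2,2d}^{p}\,\|g\|_{2,2d}^{p}-\|\mathcal{G}_{g}(f)\|_{p,4d}^{p}.$$
The Lieb inequality proved above gives $\Phi(p)\geq 0$ for all $p\geq 2$. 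At the endpoint $p=2$ one computes $C_{2,2}=(4/2)^{d/2}(1/2)^{d/2}=1$, while Plancherel's theorem (\ref{orth}) yields $\|\mathcal{G}_{g}(f)\|_{2,4d}^{2}=\|f\|_{2,2d}^{2}\|g\|_{2,2d}^{2}$; hence $\Phi(2)=0$. Since $\Phi\geq 0$ on $[2,\infty)$ and vanishes at the left endpoint, its right-hand derivative must satisfy $\Phi'(2^{+})\geq 0$.

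Next I would evaluate the two one-sided derivatives at $p=2$. Differentiating under the integral sign and using $\ln|\mathcal{G}_{g}(f)|=\tfrac12\ln|\mathcal{G}_{g}(f)|^{2}$ together with the definition of $E$,
$$\frac{d}{dp}\,\|\mathcal{G}_{g}(f)\|_{p,4d}^{p}\Big|_{p=2}=\iint_{\mathbb{R}^{2d}\times\mathbb{R}^{2d}}|\mathcal{G}_{g}(f)|^{2}\ln|\mathcal{G}_{g}(f)|\,d\mu_{4d}=-\tfrac12\,E(|\mathcal{G}_{g}(f)|^{2}).$$
For the majorant, writing $A=\|f\|_{2,2d}\|g\|_{2,2d}$ and using $q=p/(p-1)$, the exponent simplifies to the closed form $p\ln C_{p,q}=d\ln 4-dp\ln p+d(p-1)\ln(p-1)$, whence $\frac{d}{dp}(p\ln C_{p,q})=d\ln\frac{p-1}{p}$, equal to $-d\ln 2$ at $p=2$. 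Since $C_{2,2}^{2}A^{2}=A^{2}$, this gives
$$\frac{d}{dp}\big(C_{p,q}^{\,p}A^{p}\big)\Big|_{p=2}=A^{2}\big(-d\ln 2+\ln A\big)=\|f\|_{2,2d}^{2}\|g\|_{2,2d}^{2}\Big(-d\ln 2+\tfrac12\ln(\|f\|_{2,2d}^{2}\|g\|_{2,2d}^{2})\Big).$$

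Finally, substituting both derivatives into $\Phi'(2^{+})\geq 0$ and multiplying by $2$ reproduces exactly (\ref{10}). The main obstacle I anticipate is purely technical: justifying the differentiation under the integral sign at $p=2$, which needs $|\mathcal{G}_{g}(f)|^{2}\,\big|\ln|\mathcal{G}_{g}(f)|\big|$ to be integrable. This follows from $\mathcal{G}_{g}(f)\in L^{2}\cap L^{\infty}$ (relations (\ref{orth}) and (\ref{1})) together with its membership in $L^{p}$ for every $p\geq 2$ coming from the Lieb inequality, so that the entropy integral converges; and if it diverges to $+\infty$ the asserted bound is trivial. The only other delicate point is the elementary reduction of $p\ln C_{p,q}$ to its closed form and its differentiation, carried out above.
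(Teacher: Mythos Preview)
Your proposal is correct and follows essentially the same route as the paper: both define an auxiliary function of $p$ built from $\|\mathcal{G}_{g}(f)\|_{p,4d}^{p}$ and $C_{p,q}^{p}$, use Lieb's inequality and Plancherel's theorem to pin down its sign and its value at $p=2$, and then read off the entropy bound from the sign of the one-sided derivative at $p=2^{+}$. The only cosmetic difference is that the paper first normalizes to $\|f\|_{2,2d}=\|g\|_{2,2d}=1$ (which makes $|\mathcal{G}_{g}(f)|\leq 1$ and simplifies the dominated-convergence argument) and then rescales at the end, whereas you carry the factor $A=\|f\|_{2,2d}\|g\|_{2,2d}$ through directly; your closed-form computation of $\frac{d}{dp}(p\ln C_{p,q})$ matches the paper's.
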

%%%%%%%%%%%%%%%%%%%%%%%%%%%%%%%%%%%%%%%%%%%%%%%%%%%%%%%%%%%%%%%%%%%%%%%%%%%%%%%%%%%%%%%%%%%%%%%%%%%%%%%%%%%%%%%%%%%%%%%%%%%%%%%%%%%%%%%%%%%%%%%%%%%%%%%%%%%%%%%%%%%%
\begin{proof}
Assume that $\|f\|_{2,2d} = \|g\|_{2,2d} = 1$ , then by relation (\ref{1}) we deduce that
$$\forall (x,w) \in \mathbb{R}^{2d}\times\mathbb{R}^{2d}  \hspace{0.1 cm} , \hspace{0.1 cm} |\mathcal{G}_{g}(f)(x,w)| \leq \|\mathcal{G}_{g}(f)\|_{\infty,4d} \leq \|f\|_{2,2d} \|g\|_{2,2d} = 1$$
then $ln (|\mathcal{G}_{g}(f)|) \leq 0$ in particular $E(|\mathcal{G}_{g}(f)|)\geq 0$.\\
$\bullet$ Therefore if the entropy $E(|\mathcal{G}_{g}(f)|) = +\infty $ then the inequality (\ref{10}) hold trivially .\\
$\bullet$ Suppose now that the entropy $E(|\mathcal{G}_{g}(f)|) < +\infty $  and let $0 < x < 1$ and $H_{x}$ be the function defined on $]2,3]$ by
$$H_{x}(p) = \dfrac{x^{p} - x^{2}}{p - 2}$$
then
\begin{center}
$\forall p \in ]2,3]$, $\dfrac{dH_{x}}{dp}(p) = \dfrac{(p-2) x^{p} ln(x) - x^{p} + x^{2}}{(p-2)^{2}}$.
\end{center}
The sign of $\dfrac{dH_{x}}{dp}$ is the same as that of the function $K_{x}(p) = (p-2) x^{p} ln(x) - x^{p} + x^{2}$.\\
For every $0 < x < 1$, the function $K_{x}$ is differentiable on $\mathbb{R}$, especially on $]2,3]$,\\
 and its derivative is
$$\dfrac{dK_{x}}{dp}(p) = (p-2) (ln(x))^{2} x^{p} .$$
We have that, for all $0< x < 1$, $\dfrac{dK_{x}}{dp}(p)$ is positive on $]2,3]$, then $K_{x}$ is increasing on $]2,3]$.\\
For all $0<x<1$, $\displaystyle\lim_{p \longmapsto 2^{+}} K_{x}(p) = K_{x}(2) = 0$ then $K_{x}$ is positive which implies that $\dfrac{dH_{x}}{dp}$ is positive also on $]2,3]$ and consequently $p \mapsto H_{x}(p)$  is increasing on $]2,3]$.
In particular,
\begin{center}
$\forall p \in ]2,3]$, \hspace{0.5 cm} $x^{2} ln(x) = \displaystyle\lim_{p\longrightarrow 2^{+}} H_{x}(p) \leq \dfrac{x^{p} - x^{2}}{p - 2}$
\end{center}
hence
\begin{equation}\label{11}
\forall p \in ]2,3], \hspace{0.5 cm} 0 \leq \dfrac{ x^{2} - x^{p}}{p - 2} \leq -x^{2} ln(x).
\end{equation}
Inequality (\ref{11}) holds true for $x = 0$ and $x = 1$. Hence for every $0 \leq x \leq 1$ we have
$$\forall p \in ]2,3], \hspace{0.5 cm} 0 \leq \dfrac{x^{2} - x^{p}}{p - 2} \leq -x^{2} ln(x).$$
We have already observed that all every $(x,w) \in \mathbb{R}^{2d}\times\mathbb{R}^{2d}$, $0\leq |\mathcal{G}_{g}(f)(x,w)|^{2} \leq 1$; then we get for every $p \in ]2,3]$
\begin{equation}\label{12}
0 \leq \dfrac{|\mathcal{G}_{g}(f)(x,w)|^{2} - |\mathcal{G}_{g}(f)(x,w)|^{p}}{p - 2} \leq - |\mathcal{G}_{g}(f)(x,w)|^{2} ln(|\mathcal{G}_{g}(f)(x,w)|).
\end{equation}
Let $\varphi$ be the function defined on $[2, +\infty [$ by
\begin{center}
$\varphi(p) = \bigg(\displaystyle\iint_{\mathbb{R}^{2d}\times\mathbb{R}^{2d}} |\mathcal{G}_{g}(f)(x,w)|^{p} d\mu_{4d}(x,w)\bigg) - \big(C_{p,q}\big)^{p} .$
\end{center}
According to Lieb inequality, we know that for every $2 \leq p < +\infty$ the QWFT $\mathcal{G}_{g}(f)$ belongs to $L^{p}(\mathbb{R}^{2d}\times\mathbb{R}^{2d},\mathbb{H})$ and we have
\begin{equation}\label{13}
\|\mathcal{G}_{g}(f)\|_{p,4d} \leq C_{p,q} \|f\|_{2,2d} \|g\|_{2,2d}\hspace{0.2 cm} ,\hspace{0.2 cm} C_{p,q} = \bigg(\dfrac{4}{q}\bigg)^{\frac{d}{q}} \bigg(\dfrac{1}{p}\bigg)^{\frac{d}{p}}.
\end{equation}
Then, relation (\ref{13}) implies that $\varphi(p)\leq 0$ for every $p \in [2, +\infty[$ and by Plancherel's theorem we have
$$\varphi(2) = \displaystyle\iint_{\mathbb{R}^{2d}\times\mathbb{R}^{2d}} |\mathcal{G}_{g}(f)(x,w)|^{2} d\mu_{4d}(x,w) - 1 = \|\mathcal{G}_{g}(f)\|_{2,4d}^{2} - 1 = \|f\|_{2,2d}^{2}\|g\|_{2,2d}^{2} - 1 = 0$$.
Therefore $\bigg(\dfrac{d\varphi}{dp}\bigg)_{p = 2^{+}} \leq 0$ whenever this derivative is well defined. On the other hand, we have for every $p \in ]2,3]$ and for $(x,w) \in \mathbb{R}^{2d}\times\mathbb{R}^{2d}$
\begin{center}
$\bigg|\dfrac{|\mathcal{G}_{g}(f)(x,w)|^{p} - |\mathcal{G}_{g}(f)(x,w)|^{2} }{p-2}\bigg| \leq - |\mathcal{G}_{g}(f)(x,w)|^{2} ln(|\mathcal{G}_{g}(f)(x,w)|)$.
\end{center}
Then\\
$\displaystyle\iint_{\mathbb{R}^{2d}\times\mathbb{R}^{2d}}\bigg|\dfrac{|\mathcal{G}_{g}(f)(x,w)|^{p} - |\mathcal{G}_{g}(f)(x,w)|^{2} }{p-2}\bigg| d\mu_{4d}(x,w)$
\begin{eqnarray*}
 & \leq & -\displaystyle\iint_{\mathbb{R}^{2d}\times\mathbb{R}^{2d}} |\mathcal{G}_{g}(f)(x,w)|^{2} ln(|\mathcal{G}_{g}(f)(x,w)|)d\mu_{4d}(x,w)\\
& = & - \dfrac{1}{2} \displaystyle\iint_{\mathbb{R}^{2d}\times\mathbb{R}^{2d}} |\mathcal{G}_{g}(f)(x,w)|^{2} ln(|\mathcal{G}_{g}(f)(x,w)|^{2})d\mu_{4d}(x,w)\\
& = & \dfrac{1}{2} E(|\mathcal{G}_{g}(f)(x,w)|^{2}) < +\infty.
\end{eqnarray*}
Moreover, for every $p \in ]3, +\infty[$ and for every $(x,w) \in \mathbb{R}^{2d}\times\mathbb{R}^{2d}$, then
$$\bigg|\dfrac{|\mathcal{G}_{g}(f)(x,w)|^{p} - |\mathcal{G}_{g}(f)(x,w)|^{2} }{p-2}\bigg| \leq 2|\mathcal{G}_{g}(f)(x,w)|^{2}$$
and consequently
$$\begin{tabular}{lll}
$\displaystyle\iint_{\mathbb{R}^{2d}\times\mathbb{R}^{2d}}\bigg|\dfrac{|\mathcal{G}_{g}(f)(x,w)|^{p} - |\mathcal{G}_{g}(f)(x,w)|^{2} }{p-2}\bigg| d\mu_{4d}(x,w)$ & $\leq$ & $2\|\mathcal{G}_{g}(f)\|^{2}_{2,4d} = 2 < +\infty .$\\
\end{tabular}$$
Using relation (\ref{12}) and Lebesgue's dominated convergence theorem we have\\
$\bigg(\dfrac{d}{dp}\displaystyle\iint_{\mathbb{R}^{2d}\times\mathbb{R}^{2d}} |\mathcal{G}_{g}(f)(x,w)|^{p} d\mu_{4d}(x,w)\bigg)_{p=2^{+}} $
\begin{eqnarray*}
& = &  \displaystyle\lim_{p\longrightarrow2^{+}} \displaystyle\iint_{\mathbb{R}^{2d}\times\mathbb{R}^{2d}} \dfrac{|\mathcal{G}_{g}(f)(x,w)|^{p}-|\mathcal{G}_{g}(f)(x,w)|^{2}}{p-2}d\mu_{4d}(x,w) \\
& = &  \displaystyle\iint_{\mathbb{R}^{2d}\times\mathbb{R}^{2d}} \displaystyle\lim_{p\longrightarrow2^{+}}\dfrac{|\mathcal{G}_{g}(f)(x,w)|^{p}-|\mathcal{G}_{g}(f)(x,w)|^{2}}{p-2}d\mu_{4d}(x,w) \\
& = &  \displaystyle\iint_{\mathbb{R}^{2d}\times\mathbb{R}^{2d}} ln(|\mathcal{G}_{g}(f)(x,w)|) |\mathcal{G}_{g}(f)(x,w)|^{2} d\mu_{4d}(x,w)\\
& = &  \dfrac{1}{2} \displaystyle\iint_{\mathbb{R}^{2d}\times\mathbb{R}^{2d}} ln(|\mathcal{G}_{g}(f)(x,w)|^{2}) |\mathcal{G}_{g}(f)(x,w)|^{2} d\mu_{4d}(x,w)\\
& =  &  - \dfrac{1}{2} E(|\mathcal{G}_{g}(f)(x,w)|^{2}),
\end{eqnarray*}
and consequently
\begin{center}
$\bigg(\dfrac{d\varphi}{dp}\bigg)_{p=2^{+}} = - \dfrac{1}{2} E(|\mathcal{G}_{g}(f)|^{2}) - \bigg(\dfrac{d\big(\big(C_{p,q}\big)^{p}\big)}{dp}\bigg)_{p=2^{+}}$ .
\end{center}
On the other hand,
$$\hspace{-1.5 cm}\begin{tabular}{lll}
$\bigg(\dfrac{d(C_{p,q})^{p}}{dp}\bigg)_{p=2^{+}}$ & $ = $ & $ \bigg(d\bigg[-\dfrac{4}{p^{2}}\bigg(\dfrac{p-1}{p}\bigg)^{p-1} + \dfrac{4}{p}\bigg(\dfrac{p-1}{p}\bigg)^{p-1}\bigg(ln\big(\dfrac{p-1}{p}\big)+\dfrac{1}{p}\bigg)\bigg]\bigg[\bigg(\dfrac{4}{p}\bigg)^{d-1}\bigg(\dfrac{p-1}{p}\bigg)^{(p-1)(d-1)}\bigg]\bigg)_{p=2^{+}}$\\
& $ = $ & $-d$ $ln(2)$
\end{tabular}$$
%%%%%%%%%%%%%%%%%%%%%%%%%%%%%%%%%%%%%%%%%%%%%%%%%%%%%%%%%%%%%%%%%%%%%%%%%%%%%%%%%%%%%%%%%%%%%%%%%%%%%
so
\begin{center}
$\bigg(\dfrac{d\varphi}{dp}\bigg)_{p=2^{+}} = - \dfrac{1}{2} E(|\mathcal{G}_{g}(f)|^{2}) + $ $d$ $ln(2) \leq 0$
\end{center}
which gives
\begin{center}
$E(|\mathcal{G}_{g}(f)|^{2}|) \geq $ $2$ $d$ $ln(2).$
\end{center}
So (\ref{10}) is true for $\|f\|_{2,2d} = \|g\|_{2,2d} = 1$.  For generic $f,g\neq0$,
 let $\phi = \dfrac{f}{\|f\|_{2,2d}}$ and $\psi = \dfrac{g}{\|g\|_{2,2d}}$ so that $\|\phi\|_{2,2d} = \|\psi\|_{2,2d} = 1$ and $E(|\mathcal{G}_{\psi}(\phi)|^{2}) \geq$ $2$ $d$ $ln(2).$ . Since
$$\mathcal{G}_{\psi}(\phi) = \dfrac{\mathcal{G}_{g}(f)}{\|f\|_{2,2d}\|g\|_{2,2d}}$$
by combining Plancherel's formula (\ref{orth}) and Fubini's theorem we get
$$\begin{tabular}{lll}
$E(|\mathcal{G}_{\psi}(\phi)|^{2})$ & $=$ & $- \displaystyle\iint_{\mathbb{R}^{2d}\times\mathbb{R}^{2d}}\big(ln(|\mathcal{G}_{g}(f)(x,w)|^{2})-ln(\|f\|_{2,2d}^{2}\|g\|_{2,2d}^{2})\big) \dfrac{|\mathcal{G}_{g}(f)(x,w)|^{2}}{\|f\|_{2,2d}^{2}\|g\|_{2,2d}^{2}} d\mu_{4d}(x,w)$\\
& $=$ & $\dfrac{E(|\mathcal{G}_{g}(f)|^{2}) + ln(\|f\|^{2}_{2,2d}\|g\|_{2,2d}^{2})\|\mathcal{G}_{g}(f)\|^{2}_{2,4d}}{\|f\|^{2}_{2,2d} \|g\|^{2}_{2,2d}}$\\
& $=$ & $\dfrac{E(|\mathcal{G}_{g}(f)|^{2})}{\|f\|^{2}_{2,2d} \|g\|^{2}_{2,2d}} + ln(\|f\|^{2}_{2,2d} \|g\|^{2}_{2,2d})$\\
& $\geq$ & $2$ $d$ $ln(2).$
\end{tabular}$$
we deduce that
\begin{center}
$E(|\mathcal{G}_{g}(f)|^{2}) \geq \|f\|_{2,2d}^{2} \|g\|_{2,2d}^{2}$ ($2$ $d$ $ln(2)$ - $ln(\|f\|_{2,2d}^{2}\|g\|_{2,2d}^{2}).$
\end{center}
\end{proof}

%%%%%%%%%%%%%%%%%%%%%%%%%%%%%%%%%%%%%%%%%%%%%%%%%%%%%%%%%%%%%%%%%%%%%%%%%%%%%%%%%%%%%%%%%%%%%%%%%%%%%%%%%%%%%%%%%%%%%%%%%%%%%%%%%%%%%%%%%%%%%%%%%%%%%%%%%%%%%%%%%%%%
\begin{cor}
Let $f , g \in L^{2}(\mathbb{R}^{2d})$ ; $g \neq 0$ , then
\begin{center}
$E(|A(f,g)|^{2}) \geq \|f\|_{2,2d}^{2} \|g\|_{2,2d}^{2}$  $\big(2\hspace{0.1 cm}d\hspace{0.1 cm} ln(2) - ln(\|f\|_{2,2d}^{2} \|g\|_{2,2d}^{2}) \big). $
\end{center}
\end{cor}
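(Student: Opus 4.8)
The plan is to reduce this corollary to the theorem just proved by exploiting the pointwise modulus identity between the ambiguity function and the QWFT. The key structural fact I would invoke is relation (\ref{p1}), namely $|A(f,g)(x,w)| = |\mathcal{G}_{g}(f)(x,w)|$ for every $(x,w) \in \mathbb{R}^{2d}\times\mathbb{R}^{2d}$, which follows from the lemma identifying $A(f,g)$ with $\mathcal{G}_{g}(f)$ up to the unimodular quaternionic factors $e^{iw_{1}x_{1}/2}$ and $e^{jw_{2}x_{2}/2}$ on either side.

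First I would square this identity to obtain $|A(f,g)(x,w)|^{2} = |\mathcal{G}_{g}(f)(x,w)|^{2}$ as an equality of nonnegative functions on the time-frequency plane. Since the entropy functional $E(P) = -\iint_{\mathbb{R}^{2d}\times\mathbb{R}^{2d}} \ln(P(x,w))\, P(x,w)\, d\mu_{4d}(x,w)$ depends on its argument only through the pointwise values of $P$, this immediately yields $E(|A(f,g)|^{2}) = E(|\mathcal{G}_{g}(f)|^{2})$, with both sides simultaneously finite or infinite.

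Having matched the two entropies, I would simply substitute into inequality (\ref{10}) from the preceding theorem, which asserts
$$
E(|\mathcal{G}_{g}(f)|^{2}) \geq \|f\|_{2,2d}^{2}\,\|g\|_{2,2d}^{2}\big(2\,d\,\ln(2) - \ln(\|f\|_{2,2d}^{2}\,\|g\|_{2,2d}^{2})\big),
$$
to conclude the stated bound for $A(f,g)$. I do not anticipate any genuine obstacle here: once the modulus identity (\ref{p1}) is in hand the argument is purely a substitution, and the only point requiring a word of care is that the norm factors $\|f\|_{2,2d}$ and $\|g\|_{2,2d}$ are unaffected by passing from $\mathcal{G}_{g}(f)$ to $A(f,g)$, which is guaranteed precisely because the two transforms share the same modulus and hence the same $L^{2}$-norm via the Plancherel identity (\ref{orth}).
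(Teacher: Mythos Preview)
Your proposal is correct and follows exactly the paper's own argument: invoke relation (\ref{p1}) to obtain $E(|A(f,g)|^{2}) = E(|\mathcal{G}_{g}(f)|^{2})$, then apply inequality (\ref{10}). The additional remarks you make about norms and Plancherel are not needed, but they do no harm.
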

%%%%%%%%%%%%%%%%%%%%%%%%%%%%%%%%%%%%%%%%%%%%%%%%%%%%%%%%%%%%%%%%%%%%%%%%%%%%%%%%%%
\begin{proof}
By using relation (\ref{p1}) we have
$$E(|A(f,g)|^{2}) = E(|\mathcal{G}_{g}(f)|^{2})$$
and consequently with relation (\ref{10}), we get
$$E(|A(f,g)|^{2}) \geq \|f\|_{2,2d}^{2} \|g\|_{2,2d}^{2} \big(2\hspace{0.1 cm}d\hspace{0.1 cm}ln(2) - ln(\|f\|_{2,2d}^{2} \|g\|_{2,2d}^{2})\big).$$
\end{proof}
%%%%%%%%%%%%%%%%%%%%%%%%%%%%%%%%%%%%%%%%%%%%%%%%%%%%%%%%%%%%%%%%%%%%%%%%%%%%%%%%%%%%%%%%%%%%%%%%%%%%%%%%%%%%%%%%%%%%%%%%%%%%%%%%%%%%%%%%%%%%%%%%%%%%%%%%%%%%%%%%%%%%
\begin{cor}
Let $f , g \in L^{2}(\mathbb{R}^{2d},\mathbb{H})$ ; $g \neq 0$ , then,
$$E(|W(f,g)|^{2}) \geq \|f\|_{2,2d}^{2} \|g\|_{2,2d}^{2} \big(2\ d\ ln(2) - ln(2^{4d}\|f\|_{2,2d}^{2} \|g\|_{2,2d}^{2}) \big).$$
\end{cor}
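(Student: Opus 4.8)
The plan is to reduce the entropy of $|W(f,g)|^{2}$ to that of $|\mathcal{G}_{\check{g}}(f)|^{2}$, for which the theorem above already furnishes a lower bound, and then to track how the scaling factors appearing in relation (\ref{p2}) propagate through the entropy functional.

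First I would start from the pointwise identity (\ref{p2}), which gives $|W(f,g)(x,w)|^{2} = 2^{4d}|\mathcal{G}_{\check{g}}(f)(2x,2w)|^{2}$. Inserting this into the definition of $E(|W(f,g)|^{2})$ and performing the change of variables $y=2x$, $\sigma=2w$ (for which $d\mu_{4d}(x,w) = 2^{-4d}\,d\mu_{4d}(y,\sigma)$, in the spirit of the scaling Lemma giving (\ref{18})), the Jacobian cancels the explicit factor $2^{4d}$ carried by the density, leaving an integral of $ln(2^{4d}|\mathcal{G}_{\check{g}}(f)(y,\sigma)|^{2})\,|\mathcal{G}_{\check{g}}(f)(y,\sigma)|^{2}$ against $d\mu_{4d}(y,\sigma)$.

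Next I would split the logarithm as $ln(2^{4d}|\mathcal{G}_{\check{g}}(f)|^{2}) = 4d\,ln(2) + ln(|\mathcal{G}_{\check{g}}(f)|^{2})$. The part containing $ln(|\mathcal{G}_{\check{g}}(f)|^{2})$ reassembles into $E(|\mathcal{G}_{\check{g}}(f)|^{2})$, while the constant $4d\,ln(2)$ multiplies $\iint|\mathcal{G}_{\check{g}}(f)(y,\sigma)|^{2}\,d\mu_{4d}(y,\sigma)$, which by Plancherel's identity (\ref{orth}) equals $\|f\|_{2,2d}^{2}\|\check{g}\|_{2,2d}^{2} = \|f\|_{2,2d}^{2}\|g\|_{2,2d}^{2}$ since $\|\check{g}\|_{2,2d} = \|g\|_{2,2d}$. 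This produces the clean identity
$$E(|W(f,g)|^{2}) = E(|\mathcal{G}_{\check{g}}(f)|^{2}) - 4d\,ln(2)\,\|f\|_{2,2d}^{2}\|g\|_{2,2d}^{2}.$$

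Finally I would apply the theorem above with the window $\check{g}$ in place of $g$, namely $E(|\mathcal{G}_{\check{g}}(f)|^{2}) \geq \|f\|_{2,2d}^{2}\|g\|_{2,2d}^{2}\big(2d\,ln(2) - ln(\|f\|_{2,2d}^{2}\|g\|_{2,2d}^{2})\big)$, and substitute it into the displayed identity. Combining the $2d\,ln(2)$ and $-4d\,ln(2)$ contributions and using $ln(2^{4d}\|f\|_{2,2d}^{2}\|g\|_{2,2d}^{2}) = 4d\,ln(2) + ln(\|f\|_{2,2d}^{2}\|g\|_{2,2d}^{2})$ rewrites the bound precisely into the claimed form. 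The only genuinely delicate point is the measure-scaling bookkeeping in the change of variables: one must check that the factor $2^{-4d}$ from the normalized Lebesgue measure exactly absorbs the $2^{4d}$ in the density, so that the net effect on the entropy is only the additive shift $-4d\,ln(2)\,\|f\|_{2,2d}^{2}\|g\|_{2,2d}^{2}$; everything else is routine substitution followed by an application of the preceding theorem.
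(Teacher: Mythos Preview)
Your proposal is correct and follows essentially the same route as the paper: it uses relation (\ref{p2}) and the change of variables $(y,\sigma)=(2x,2w)$ to obtain the identity $E(|W(f,g)|^{2}) = E(|\mathcal{G}_{\check{g}}(f)|^{2}) - \ln(2^{4d})\|f\|_{2,2d}^{2}\|g\|_{2,2d}^{2}$, and then applies the preceding theorem (\ref{10}) with window $\check{g}$ together with $\|\check{g}\|_{2,2d}=\|g\|_{2,2d}$ to conclude.
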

%%%%%%%%%%%%%%%%%%%%%%%%%%%%%%%%%%%%%%%%%%%%%%%%%%%%%%%%%%%%%%%%%%%%%%%%%%%%%%%%%
\begin{proof}
By using  the relations (\ref{p2}) and (\ref{10}), we have
$$\begin{tabular}{lll}
$E(|W(f,g)|^{2})$ & $=$ & $-\displaystyle\iint_{\mathbb{R}^{2d}\times\mathbb{R}^{2d}} ln(|W(f,g)(x,w)|^{2}) |W(f,g)(x,w)|^{2} d\mu_{4d}(x,w)$\\
& $=$ & $-\displaystyle\int_{\mathbb{R}^{2d}\times\mathbb{R}^{2d}} ln(2^{4d}|\mathcal{G}_{\check{g}}(f)(2x,2w)|^{2}) 2^{4d} |\mathcal{G}_{\check{g}}(f)(2x,2w)|^{2} d\mu_{4d}(x,w)$\\
& $=$ & $-\displaystyle\int_{\mathbb{R}^{2d}\times\mathbb{R}^{2d}} ln(2^{4d}|\mathcal{G}_{\check{g}}(f)(y,\lambda)|^{2}) |\mathcal{G}_{\check{g}}(f)(y,\lambda)|^{2} d\mu_{4d}(y,\lambda)$\\
& $=$ & $-\displaystyle\int_{\mathbb{R}^{2d}\times\mathbb{R}^{2d}} \big[ln(2^{4d}) + ln(|\mathcal{G}_{\check{g}}(f)(y,\lambda)|^{2}) \big] |\mathcal{G}_{\check{g}}(f)(y,\lambda)|^{2} d\mu_{4d}(y,\lambda)$\\
& $=$ & $-ln(2^{4d})\|\mathcal{G}_{\check{g}}(f)\|_{2,2d}^{2} + E(|\mathcal{G}_{\check{g}}(f)|^{2})$\\
& $=$ & $-ln(2^{4d}) \|f\|_{2,2d}^{2} \|\check{g}\|_{2,2d}^{2} + E(|\mathcal{G}_{\check{g}}(f)|^{2})$\\
& $\geq$ & $-ln(2^{4d}) \|f\|_{2,2d}^{2} \|g\|_{2,2d}^{2} + \|f\|_{2,2d}^{2} \|g\|_{2,2d}^{2}(2\hspace{0.1 cm}d\hspace{0.1 cm}ln(2) - ln(\|f\|_{2,2d}^{2} \|g\|_{2,2d}^{2}))$\\
& $=$ & $ \|f\|_{2,2d}^{2} \|g\|_{2,2d}^{2}(2\hspace{0.1 cm}d\hspace{0.1 cm}ln(2) -ln(2^{4d})  - ln(\|f\|_{2,2d}^{2} \|g\|_{2,2d}^{2}))$\\
& $=$ & $ \|f\|_{2,2d}^{2} \|g\|_{2,2d}^{2}\big(2\hspace{0.1 cm}d\hspace{0.1 cm}ln(2) - ln(2^{4d} \|f\|_{2,2d}^{2} \|g\|_{2,2d}^{2})\big) .$
\end{tabular}$$
\end{proof}
%%%%%%%%%%%%%%%%%%%%%%%%%%%%%%%%%%%%%%%%%%%%%%%%%%%%%%%%%%%%%%%%%%%%%%%%%%%%%%%%%%%%%%%%%%%%%%%%%%%%%%%%%%%%%%%%%%%%%%%%%%%%%%%%%%%%%%%%%%%%%%%%%%%%%%%%%%%%%%%%%%%%%%%%%%%%%%%%%%%%%%%%%%%%%%%%%%%%%%%%%%%%%%%%%%%%%%%%%%%%%%%%%%%%%%%%%%%%%%%%%%%%%%%%%%%%%%%%%%%%%%%%%%%%%%%%%%%%%%%%%%%%%%%%%%%%%%%%%%%%%%
In what follows we shall use The Beckner's uncertainty principle in terms of entropy to prove the Heisenberg uncertainty principle.
\subsection{The Heisenberg uncertainty principle  for $\mathcal{G}_{g}$}
\begin{thm} Let $p$ and $q$ be two positive real numbers. Then there exists a nonnegative constant $E_{p,q}$ such that for every window function $g$ and for every function $f \in L^{2}(\mathbb{R}^{2d},\mathbb{H})$ we have
\begin{eqnarray}
\bigg(\displaystyle\iint_{\mathbb{R}^{2d}\times\mathbb{R}^{2d}} |x|^{2p} |\mathcal{G}_{g}(f)(x,w)|^{2} d\mu_{4d}(x,w)\bigg)^{\frac{q}{p+q}} &  \bigg(\displaystyle\iint_{\mathbb{R}^{2d}\times\mathbb{R}^{2d}} |w|^{2q} |\mathcal{G}_{g}(f)(x,w)|^{2} d\mu_{4d}(x,w)\bigg)^{\frac{p}{p+q}}\nonumber \\
& \geq E_{p,q} \|f\|_{2,2d}^{2}\|g\|_{2,2d}^{2},
\end{eqnarray}
where
\begin{equation}\label{Epq}
E_{p,q} = \bigg[\bigg(\dfrac{p}{q}\bigg)^{\frac{q}{p+q}}+ \bigg(\dfrac{q}{p}\bigg)^{\frac{p}{p+q}}\bigg]^{-1}  e^{\frac{pq\bigg(2dln(2)+ln\bigg(\frac{2^{2d}pq\Gamma(d)^{2} }{\Gamma(\frac{d}{p})\Gamma(\frac{d}{q})}\bigg)\bigg)}{d(p+q)}-1}.
\end{equation}
\end{thm}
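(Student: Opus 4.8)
The plan is to derive this Heisenberg-type inequality from the entropy estimate (\ref{10}) by comparing $|\mathcal{G}_{g}(f)|^{2}$ with a generalized Gaussian. Set $M=\|f\|_{2,2d}^{2}\|g\|_{2,2d}^{2}$, and abbreviate the two moments by $A=\iint_{\mathbb{R}^{2d}\times\mathbb{R}^{2d}}|x|^{2p}|\mathcal{G}_{g}(f)(x,w)|^{2}\,d\mu_{4d}(x,w)$ and $B=\iint_{\mathbb{R}^{2d}\times\mathbb{R}^{2d}}|w|^{2q}|\mathcal{G}_{g}(f)(x,w)|^{2}\,d\mu_{4d}(x,w)$. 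By Plancherel's theorem (\ref{orth}) the nonnegative function $|\mathcal{G}_{g}(f)|^{2}$ has total mass $M$. For parameters $a,b>0$ I introduce the comparison density $\rho_{a,b}(x,w)=\frac{M}{Z}\,e^{-a|x|^{2p}-b|w|^{2q}}$, where $Z=\iint_{\mathbb{R}^{2d}\times\mathbb{R}^{2d}}e^{-a|x|^{2p}-b|w|^{2q}}\,d\mu_{4d}(x,w)$, so that $\rho_{a,b}$ also has mass $M$.

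First I would use the elementary bound $t\ln t\ge t\ln s+(t-s)$ (nonnegativity of relative entropy) with $t=|\mathcal{G}_{g}(f)|^{2}$ and $s=\rho_{a,b}$, then integrate. Since $\ln\rho_{a,b}=\ln(M/Z)-a|x|^{2p}-b|w|^{2q}$, this gives $-E(|\mathcal{G}_{g}(f)|^{2})\ge M\ln(M/Z)-aA-bB$, i.e.
\begin{equation*}
E(|\mathcal{G}_{g}(f)|^{2})\le aA+bB-M\ln M+M\ln Z.
\end{equation*}
Combining this with (\ref{10}), which reads $E(|\mathcal{G}_{g}(f)|^{2})\ge M(2d\ln 2-\ln M)$, the $M\ln M$ terms cancel and I obtain the linear inequality $aA+bB\ge M(2d\ln 2-\ln Z)$ valid for all $a,b>0$. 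Next I compute $Z$: separating variables, $Z=Z_{1}Z_{2}$ with $Z_{1}=\int_{\mathbb{R}^{2d}}e^{-a|x|^{2p}}\,d\mu_{2d}(x)$; passing to polar coordinates on $\mathbb{R}^{2d}$ (sphere measure $2\pi^{d}/\Gamma(d)$) and substituting $s=ar^{2p}$ yields $Z_{1}=\frac{\Gamma(d/p)}{2^{d}p\,\Gamma(d)}\,a^{-d/p}$, and similarly for $Z_{2}$. Hence $-\ln Z=\ln\frac{2^{2d}pq\,\Gamma(d)^{2}}{\Gamma(d/p)\Gamma(d/q)}+\frac{d}{p}\ln a+\frac{d}{q}\ln b$, so the linear inequality becomes
\begin{equation*}
aA+bB\ \ge\ M\Big(K+\tfrac{d}{p}\ln a+\tfrac{d}{q}\ln b\Big),\qquad K=2d\ln 2+\ln\frac{2^{2d}pq\,\Gamma(d)^{2}}{\Gamma(d/p)\Gamma(d/q)}.
\end{equation*}

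Finally I would eliminate $a,b$ to reach the product form. Because the last inequality holds for every $a,b>0$, I optimize the free parameters (the stationary point is $a=\frac{Md}{pA}$, $b=\frac{Md}{qB}$), which after exponentiation converts the bound into an estimate for $A^{d/p}B^{d/q}$. Raising it to the power $\frac{pq}{d(p+q)}$ produces precisely the exponent $\frac{pq}{d(p+q)}K-1$ inside the exponential of $E_{p,q}$, the constant $-1$ arising because the weights satisfy $\big(\frac{d}{p}+\frac{d}{q}\big)\cdot\frac{pq}{d(p+q)}=1$, and the exponents $\frac{d}{p},\frac{d}{q}$ rescale to $\frac{q}{p+q},\frac{p}{p+q}$; reorganizing the residual numerical prefactor through the weighted arithmetic--geometric mean inequality with weights $\frac{q}{p+q},\frac{p}{p+q}$ yields the bracketed factor $\big[(p/q)^{q/(p+q)}+(q/p)^{p/(p+q)}\big]^{-1}$ and thus $A^{q/(p+q)}B^{p/(p+q)}\ge E_{p,q}\,M$. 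The main obstacle is exactly this last step: the two-parameter elimination together with the careful bookkeeping that pins down the multiplicative constant $E_{p,q}$ in the stated closed form, whereas the Gibbs comparison, the entropy input, and the Gaussian (Gamma-function) computation of $Z$ are routine.
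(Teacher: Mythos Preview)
Your approach is correct and follows the same core idea as the paper: compare $|\mathcal{G}_g(f)|^2$ to a generalized Gaussian via the Gibbs/Jensen inequality, invoke the entropy lower bound (\ref{10}), compute the normalizing constant through Gamma functions, and then optimize the free parameters. The only structural difference is that the paper uses a \emph{one}-parameter family $A_{t,p,q}(x,w)=\dfrac{B_{p,q}}{t^{d/p+d/q}}\,e^{-(|x|^{2p}+|w|^{2q})/t}$, optimizes over $t$ to obtain first an \emph{additive} bound $\iint(|x|^{2p}+|w|^{2q})|\mathcal{G}_g(f)|^2\,d\mu_{4d}\ge D_{p,q}\|f\|_{2,2d}^{2}\|g\|_{2,2d}^{2}$, and only then introduces a second parameter through the dilation identity (\ref{dilate}) (replacing $f,g$ by $f_\lambda,g_\lambda$), optimizing over $\lambda$ to pass from the sum to the product $A^{q/(p+q)}B^{p/(p+q)}$. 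Your two-parameter comparison $\rho_{a,b}$ merges these two stages into a single optimization and thus bypasses the dilation formula altogether; the paper's route, by contrast, makes the scaling covariance of the QWFT explicit. Both yield the same constant: your critical-point computation gives the prefactor $\dfrac{d}{p^{q/(p+q)}q^{p/(p+q)}}$, and one checks directly that this equals $\dfrac{d(p+q)}{pq}\Big[(p/q)^{q/(p+q)}+(q/p)^{p/(p+q)}\Big]^{-1}$ --- this algebraic identity (observe that $(p/q)^{q/(p+q)}+(q/p)^{p/(p+q)}=\dfrac{p+q}{p^{p/(p+q)}q^{q/(p+q)}}$) is what you actually need in your ``last step'', not a weighted AM--GM argument.
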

%%%%%%%%%%%%%%%%%%%%%%%%%%%%%%%%%%%%%%%%%%%%%%%%%%%%%%%%%%%%%%%%%%%%%%%%%%%%%%%%%%%%%%%%%%%%%%%%%%%%%%%%%%%%%%%%%%%%%%%%%%%%%%%%%%%%%%%%%%%%%%%%%%%%%%%%%%%%%%%%%%%%%%%%%%%%%%%%%%%%%%%%%%%%%%%%%%%%%%%%
\begin{proof}
Assume that $\|f\|_{2,2d} = \|g\|_{2,2d} = 1 $ and let $A_{t,p,q}$ be the function defined on $\mathbb{R}^{2d}\times\mathbb{R}^{2d}$ by $A_{t,p,q}(x,w) = \dfrac{B_{p,q}}{t^{\frac{d}{p}+\frac{d}{q}}} e^{-\frac{|x|^{2p}+|w|^{2q}}{t}}$ where $B_{p,q} = \bigg(\dfrac{2^{2d}pq\Gamma(d)^{2} }{\Gamma(\frac{d}{p})\Gamma(\frac{d}{q})}\bigg)$.\\
$\bullet$ We see that
\begin{center}
 $ \displaystyle\iint_{\mathbb{R}^{2d}\times\mathbb{R}^{2d}} A_{t,p,q}(x,w) d\mu_{4d}(x,w) = 1$
\end{center}
in particular $d\sigma_{t,p,q}(x,w) = A_{t,p,q}(x,w) d\mu_{4d}(x,w)$ is a probability measure on $\mathbb{R}^{2d}\times\mathbb{R}^{2d}$.\\
$\bullet$ The function $\varphi(t) = t ln(t)$ is a convex function over $]0, +\infty[$.\\
$\bullet$ $H(x,w) = \dfrac{|\mathcal{G}_{g}(f)(x,w)|^{2}}{A_{t,p,q}}$ is a real-valued function, integrable with respect to the measure $d\sigma_{t,p,q}$ on $\mathbb{R}^{2d}\times\mathbb{R}^{2d}$,\\
hence according to Jensen inequality we get
\begin{center}
$\varphi\bigg(\displaystyle\iint_{\mathbb{R}^{2d}\times\mathbb{R}^{2d}} H(x,w)d\sigma_{t,p,q}(x,w)\bigg) \leq \displaystyle\iint_{\mathbb{R}^{2d}\times\mathbb{R}^{2d}} \varphi(H(x,w)) d\sigma_{t,p,q} (x,w)$
\end{center}
so
$$\hspace{-1 cm}\varphi\bigg(\displaystyle\iint_{\mathbb{R}^{2d}\times\mathbb{R}^{2d}} H(x,w)d\sigma_{t,p,q}(x,w)\bigg) = \varphi\bigg(\displaystyle\iint_{\mathbb{R}^{2d}\times\mathbb{R}^{2d}} \frac{|\mathcal{G}_{g}(f)(x,w)|^{2}}{A_{t,p,q}}  d\sigma_{t,p,q} (x,w)\bigg) = \varphi(\|\mathcal{G}_{g}(f)\|_{2,4d}^{2}) = \varphi(1) = 0$$
 and
 $$\displaystyle\iint_{\mathbb{R}^{2d}\times\mathbb{R}^{2d}} \varphi(H(x,w)) d\sigma_{t,p,q} (x,w) = \displaystyle\iint_{\mathbb{R}^{2d}\times\mathbb{R}^{2d}} \dfrac{|\mathcal{G}_{g}(f)(x,w)|^{2}}{A_{t,p,q}}ln\bigg(\dfrac{|\mathcal{G}_{g}(f)(x,w)|^{2}}{A_{t,p,q}}\bigg) d\sigma_{t,p,q} (x,w).$$
 Then
 \begin{center}
 $0 \leq \displaystyle\iint_{\mathbb{R}^{2d}\times\mathbb{R}^{2d}} \dfrac{|\mathcal{G}_{g}(f)(x,w)|^{2}}{A_{t,p,q}}ln\bigg(\dfrac{|\mathcal{G}_{g}(f)(x,w)|^{2}}{A_{t,p,q}}\bigg)d\sigma_{t,p,q} (x,w)$,
 \end{center}
which implies in terms of entropy that for every positive real number $t$,\\
$$\hspace{-1 cm}\begin{tabular}{lll}
$0$ & $ \leq $ & $\displaystyle\iint_{\mathbb{R}^{2d}\times\mathbb{R}^{2d}} \dfrac{|\mathcal{G}_{g}(f)(x,w)|^{2}}{A_{t,p,q}}ln\bigg(\dfrac{|\mathcal{G}_{g}(f)(x,w)|^{2}}{A_{t,p,q}}\bigg)d\sigma_{t,p,q} (x,w)$ \\
& $=$ & $ \displaystyle\iint_{\mathbb{R}^{2d}\times\mathbb{R}^{2d}} \dfrac{|\mathcal{G}_{g}(f)(x,w)|^{2}}{A_{t,p,q}}\big(ln(|\mathcal{G}_{g}(f)(x,w)|^{2}) - ln(A_{t,p,q})\big)d\sigma_{t,p,q} (x,w)$\\
& $=$ & $ \displaystyle\iint_{\mathbb{R}^{2d}\times\mathbb{R}^{2d}}|\mathcal{G}_{g}(f)(x,w)|^{2} ln(|\mathcal{G}_{g}(f)(x,w)|^{2}d\mu_{4d}(x,w) - \displaystyle\iint_{\mathbb{R}^{2d}\times\mathbb{R}^{2d}} |\mathcal{G}_{g}(f)(x,w)|^{2} ln(A_{t,p,q}(x,w)) d\mu_{4d}(x,w)$\\
& $=$ & $ - E(|\mathcal{G}_{g}(f)(x,w)|^{2}) - \displaystyle\iint_{\mathbb{R}^{2d}\times\mathbb{R}^{2d}} |\mathcal{G}_{g}(f)(x,w)|^{2} ln(A_{t,p,q}(x,w)) d\mu_{4d}(x,w)$\\
& $=$ & $ - E(|\mathcal{G}_{g}(f)(x,w)|^{2}) - \displaystyle\iint_{\mathbb{R}^{2d}\times\mathbb{R}^{2d}} \mathcal{G}_{g}(f)(x,w)|^{2} \bigg( ln(B_{p,q}) - \frac{|x|^{2p}+|w|^{2q}}{t} - ln(t^{\frac{d}{p}+\frac{d}{q}})\bigg) d\mu_{2d}(x,w)$\\
& $=$ & $ - E(|\mathcal{G}_{g}(f)(x,w)|^{2}) - ln(B_{p,q}) \|\mathcal{G}_{g}(f)(x,w)\|^{2}_{2,4d} + ln\big(t^{\frac{d}{p}+\frac{d}{q}}\big)\|\mathcal{G}_{g}(f)(x,w)\|^{2}_{2,4d} $\\
& $+$ & $ \dfrac{1}{t} \displaystyle\iint_{\mathbb{R}^{2d}\times\mathbb{R}^{2d}} (|x|^{2p}+|w|^{2q})|\mathcal{G}_{g}(f)(x,w)|^{2} d\mu_{4d}(x,w)$\\
& $=$ & $ - E(|\mathcal{G}_{g}(f)(x,w)|^{2}) - ln(B_{p,q})  + ln\big(t^{\frac{d}{p}+\frac{d}{q}}\big)  + \dfrac{1}{t} \displaystyle\iint_{\mathbb{R}^{2d}\times\mathbb{R}^{2d}} (|x|^{2p}+|w|^{2q})|\mathcal{G}_{g}(f)(x,w)|^{2} d\mu_{4d}(x,w)$.
\end{tabular}$$
Therefore
$$E(|\mathcal{G}_{g}(f)(x,w)|^{2}) + ln(B_{p,q})  \leq ln\big(t^{\frac{d}{p}+\frac{d}{q}}\big) + \dfrac{1}{t} \displaystyle\iint_{\mathbb{R}^{2d}\times\mathbb{R}^{2d}} (|x|^{2p}+|w|^{2q})|\mathcal{G}_{g}(f)(x,w)|^{2} d\mu_{4d}(x,w).$$
By (\ref{10}) we get\\

$\|f\|_{2,2d}^{2} \|g\|_{2,2d}^{2}(2\hspace{0.1 cm}d\hspace{0.1 cm}ln(2) - ln(\|f\|_{2,2d}^{2} \|g\|_{2,2d}^{2})) + ln(B_{p,q})$
\begin{center}
$\leq ln\big(t^{\frac{d}{p}+\frac{d}{q}}\big)+ \dfrac{1}{t} \displaystyle\iint_{\mathbb{R}^{2d}\times\mathbb{R}^{2d}} (|x|^{2p}+|w|^{2q})|\mathcal{G}_{g}(f)(x,w)|^{2} d\mu_{4d}(x,w).$
\end{center}
Knowing that $\|f\|_{2,2d} =  \|g\|_{2,2d} = 1$, we deduce that
\begin{center}
$t \bigg(2\hspace{0.1 cm}d\hspace{0.1 cm}ln(2) + ln(B_{p,q}) - ln\big(t^{\frac{d}{p}+\frac{d}{q}}\big)\bigg) \leq  \displaystyle\iint_{\mathbb{R}^{2d}\times\mathbb{R}^{2d}} (|x|^{2p}+|w|^{2q})|\mathcal{G}_{g}(f)(x,w)|^{2} d\mu_{4d}(x,w).$
\end{center}
However the expression $t \bigg(2\hspace{0.1 cm}d\hspace{0.1 cm}ln(2) + ln(B_{p,q}) - ln\big(t^{\frac{d}{p}+\frac{d}{q}}\big)\bigg)$ attains its upper bound if
$$ 0  = \dfrac{d}{dt}\bigg(t \big(2\hspace{0.1 cm}d\hspace{0.1 cm}ln(2) + ln(B_{p,q}) - ln\big(t^{\frac{d}{p}+\frac{d}{q}}\big)\big)\bigg) =  2dln(2) + ln(B_{p,q}) -  ln\big(t^{\frac{d}{p}+\frac{d}{q}}\big) - \bigg(\frac{d}{p}+\frac{d}{q}\bigg) $$
at the point
  $t = e^{\frac{pq(2dln(2)+ln(B_{p,q}))}{d(p+q)}-1}$\\
 which implies that
\begin{center}
$D_{p,q} \leq  \displaystyle\iint_{\mathbb{R}^{2d}\times\mathbb{R}^{2d}} (|x|^{2p}+|w|^{2q})|\mathcal{G}_{g}(f)(x,w)|^{2} d\mu_{4d}(x,w) $
\end{center}
 where $D_{p,q} = \frac{d(p+q)}{pq}e^{\frac{pq(2dln(2)+ln(B_{p,q}))}{d(p+q)}-1}.$\\
Suppose now that $f\neq 0$, $\phi=\dfrac{f}{\|f\|_{2,2d}}$ and $\psi=\dfrac{g}{\|g\|_{2,2d}}$, so that $\|\phi\|_{2,2d} = \|\psi\|_{2,2d} = 1$.\\
By the previous calculations we have
\begin{center}
$D_{p,q}  \leq  \displaystyle\iint_{\mathbb{R}^{2d}\times\mathbb{R}^{2d}} (|x|^{2p}+|w|^{2q})|\mathcal{G}_{\psi}(\phi)(x,w)|^{2} d\mu_{4d}(x,w) $.
\end{center}
Using the relation $|\mathcal{G}_{\psi}(\phi)(x,w)| = \dfrac{|\mathcal{G}_{g}(f)(x,w)|}{ \|f\|_{2,2d} \|g\|_{2,2d}}$, we get
\begin{center}
$D_{p,q}\hspace{0.1 cm} \|f\|^{2}_{2,2d} \|g\|^{2}_{2,2d}  \leq  \displaystyle\iint_{\mathbb{R}^{2d}\times\mathbb{R}^{2d}} (|x|^{2p}+|w|^{2q})|\mathcal{G}_{g}(f)(x,w)|^{2} d\mu_{4d}(x,w) .$
\end{center}
Now for every positive real number $\lambda$ the dilates $f_{\lambda}$ and $g_{\lambda}$ belongs to $L^{2}(\mathbb{R}^{2d},\mathbb{H})$, we have
\begin{center}
$D_{p,q}\hspace{0.1 cm} \|f_{\lambda}\|^{2}_{2,2d} \|g_{\lambda}\|^{2}_{2,2d}  \leq  \displaystyle\iint_{\mathbb{R}^{2d}\times\mathbb{R}^{2d}} (|x|^{2p}+|w|^{2q})|\mathcal{G}_{g_{\lambda}}(f_{\lambda})(x,w)|^{2} d\mu_{4d}(x,w)$
\end{center}
 then by relation (\ref{dilate}), we have\\
 $\displaystyle\iint_{\mathbb{R}^{2d}\times\mathbb{R}^{2d}} (|x|^{2p}+|w|^{2q})|\mathcal{G}_{g_{\lambda}}(f_{\lambda})(x,w)|^{2} d\mu_{4d}(x,w) $
 \begin{eqnarray*}
  & = & \displaystyle\iint_{\mathbb{R}^{2d}\times\mathbb{R}^{2d}} (|x|^{2p}+|w|^{2q})\bigg|\dfrac{\mathcal{G}_{g}(f)(\lambda x,\frac{w}{\lambda})}{\lambda^{2d}}\bigg|^{2} d\mu_{4d}(x,w) \\
 & = & \dfrac{1}{\lambda^{4d}}\displaystyle\iint_{\mathbb{R}^{2d}\times\mathbb{R}^{2d}}\bigg(\bigg|\frac{x}{\lambda}\bigg|^{2p} + |\lambda w|^{2q}\bigg)|\mathcal{G}_{g}(f)( x,w)|^{2} d\mu_{4d}(x,w)
 \end{eqnarray*}
 and
 \begin{center}
 $D_{p,q}\hspace{0.1 cm} \|f_{\lambda}\|^{2}_{2,2d} \|g_{\lambda}\|^{2}_{2,2d} = \dfrac{1}{\lambda^{4d}} D_{p,q} \hspace{0.1 cm} \|f\|^{2}_{2,2d} \|g\|^{2}_{2,2d}$.
 \end{center}
 Therefore for every positive real number $\lambda$
\begin{center}
$D_{p,q}\hspace{0.1 cm} \|f\|^{2}_{2,2d} \|g\|^{2}_{2,2d}  \leq  Q_{\lambda}$,
\end{center}
where
\begin{equation*}
Q_{\lambda} = \displaystyle\iint_{\mathbb{R}^{2d}\times\mathbb{R}^{2d}} (\lambda^{-2p}|x|^{2p} + \lambda^{2q}|w|^{2q})|\mathcal{G}_{g}(f)(x,w)|^{2} d\mu_{4d}(x,w)
\end{equation*}
in particular, the inequality holds at the critical point $\lambda$ where $\dfrac{d}{d\lambda} Q_{\lambda} = 0$
then for
\begin{equation*}
\lambda =\bigg(\dfrac{p}{q}\displaystyle\iint_{\mathbb{R}^{2d}\times\mathbb{R}^{2d}} |x|^{2p} |\mathcal{G}_{g}(f)(x,w)|^{2} d\mu_{4d}(x,w)\bigg)^{\frac{1}{2(p+q)}} \bigg(\displaystyle\iint_{\mathbb{R}^{2d}\times\mathbb{R}^{2d}} |w|^{2q} |\mathcal{G}_{g}(f)(x,w)|^{2} d\mu_{4d}(x,w)\bigg)^{\frac{-1}{2(p+q)}}
\end{equation*}
we have that\\
$D_{p,q}\|f\|^{2}_{2,2d} \|g\|^{2}_{2,2d} \leq \bigg[\bigg(\dfrac{p}{q}\bigg)^{\frac{q}{p+q}}+ \bigg(\dfrac{q}{p}\bigg)^{\frac{p}{p+q}}\bigg]$
\begin{eqnarray*}
  \bigg(\displaystyle\iint_{\mathbb{R}^{2d}\times\mathbb{R}^{2d}} |x|^{2p} |\mathcal{G}_{g}(f)(x,w)|^{2} d\mu_{4d}(x,w)\bigg)^{\frac{q}{p+q}}\bigg(\displaystyle\iint_{\mathbb{R}^{2d}\times\mathbb{R}^{2d}} |w|^{2q} |\mathcal{G}_{g}(f)(x,w)|^{2} d\mu_{4d}(x,w)\bigg)^{\frac{p}{p+q}}.
\end{eqnarray*}
Therefore
\begin{center}
$E_{p,q} \|f\|^{2}_{2,2d} \|g\|^{2}_{2,2d} \leq \bigg(\displaystyle\iint_{\mathbb{R}^{2d}\times\mathbb{R}^{2d}} |x|^{2p} |\mathcal{G}_{g}(f)(x,w)|^{2} d\mu_{4d}(x,w)\bigg)^{\frac{q}{p+q}}\bigg(\displaystyle\iint_{\mathbb{R}^{2d}\times\mathbb{R}^{2d}} |w|^{2q} |\mathcal{G}_{g}(f)(x,w)|^{2} d\mu_{4d}(x,w)\bigg)^{\frac{p}{p+q}}$
\end{center}
where
\begin{center}
$E_{p,q} = \bigg[\bigg(\dfrac{p}{q}\bigg)^{\frac{q}{p+q}}+ \bigg(\dfrac{q}{p}\bigg)^{\frac{p}{p+q}}\bigg]^{-1}  e^{\frac{pq\bigg(2dln(2)+ln\bigg(\frac{2^{2d}pq\Gamma(d)^{2} }{\Gamma(\frac{d}{p})\Gamma(\frac{d}{q})}\bigg)\bigg)}{d(p+q)}-1}$ .
\end{center}
In the particular case when $p = q = 1$ we get
\begin{center}
$\||x|  \mathcal{G}_{g}(f)(x,w)\|_{2,4d}\| |w| \mathcal{G}_{g}(f)(x,w)\|_{2,4d} \geq \dfrac{2}{e} \|f\|_{2,2d}^{2} \|g\|_{2,2d}^{2}$
\end{center}
\end{proof}
%%%%%%%%%%%%%%%%%%%%%%%%%%%%%%%%%%%%%%%%%%%%%%%%%%%%%%%%%%%%%%%%%%%%%%%%%%%%%%%%%%%%%%%%%%%%%%%%%%%%%%%%%%%%%%%%%%%%%%%%%%%%%%%%%%%%%%%%%%%%%%%%%%%%%%%%%%%%%%%%%%%%%%%%%%%%%%%%%%%%%%%%%%%%%%%%%%%%%%%%%%%%%%%%%%%%%%%%%%%%%%%%%%%%%%%%%%%%%%%%%%%%%%%%%%%%%%%%%%%%%%%%%%%%%%%%%%%%%%%%%%%%%%%%%%%%%%%%%%%%
\begin{cor}\label{hesss4}Let $p$ and $q$ be two positive real numbers. Then there exists a nonnegative constant $E_{p,q}$ such that for every window function $g$ and for every function $f \in L^{2}(\mathbb{R}^{2d},\mathbb{H})$ we have
\begin{equation}\label{hes4}
\||(x,w)|^{p}\mathcal{G}_{g}(f)\|_{2,4d}^{\frac{q}{p+q}} \||(x,w)|^{q}\mathcal{G}_{g}(f)\|_{2,4d}^{\frac{p}{p+q}} \geq \sqrt{E_{p,q}} \|f\|_{2,2d}  \|g\|_{2,2d},
\end{equation}
where $E_{p,q}$ is given by (\ref{Epq}).
\end{cor}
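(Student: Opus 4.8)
The plan is to deduce Corollary \ref{hesss4} directly from the preceding Heisenberg theorem by exploiting the pointwise monotonicity of the weights. First I would record the elementary observation that, for every $(x,w)\in\mathbb{R}^{2d}\times\mathbb{R}^{2d}$, the joint modulus satisfies $|(x,w)|^{2}=|x|^{2}+|w|^{2}$, so that $|(x,w)|\ge|x|$ and $|(x,w)|\ge|w|$. Raising to the relevant positive powers then gives $|(x,w)|^{2p}\ge|x|^{2p}$ and $|(x,w)|^{2q}\ge|w|^{2q}$ for all $p,q>0$.

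Next I would unfold the weighted $L^{2}$-norms appearing in the statement. By definition,
\[
\||(x,w)|^{p}\mathcal{G}_{g}(f)\|_{2,4d}^{2}=\iint_{\mathbb{R}^{2d}\times\mathbb{R}^{2d}}|(x,w)|^{2p}\,|\mathcal{G}_{g}(f)(x,w)|^{2}\,d\mu_{4d}(x,w),
\]
and similarly with $q$ in place of $p$. Inserting the two pointwise bounds inside these integrals yields
\[
\||(x,w)|^{p}\mathcal{G}_{g}(f)\|_{2,4d}^{2}\ \ge\ \iint_{\mathbb{R}^{2d}\times\mathbb{R}^{2d}}|x|^{2p}\,|\mathcal{G}_{g}(f)(x,w)|^{2}\,d\mu_{4d}(x,w)
\]
together with the companion estimate in which $|x|^{2p}$ is replaced by $|w|^{2q}$ on the right. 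I would then raise the first inequality to the power $\frac{q}{p+q}$ and the second to the power $\frac{p}{p+q}$ (both exponents being positive, the inequalities are preserved) and multiply the two. The left-hand side becomes precisely the square of the quantity in (\ref{hes4}), namely $\||(x,w)|^{p}\mathcal{G}_{g}(f)\|_{2,4d}^{\frac{2q}{p+q}}\,\||(x,w)|^{q}\mathcal{G}_{g}(f)\|_{2,4d}^{\frac{2p}{p+q}}$, while the right-hand side is exactly the product of weighted integrals bounded below in the preceding theorem by $E_{p,q}\|f\|_{2,2d}^{2}\|g\|_{2,2d}^{2}$. Taking square roots then produces (\ref{hes4}).

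There is essentially no obstacle here: the argument is pure monotonicity of the integrand weights combined with the already-established Heisenberg inequality. The only point demanding a little care is the exponent bookkeeping — one must check that squaring the left-hand side of (\ref{hes4}) reproduces the two weighted norms with exponents $\frac{2q}{p+q}$ and $\frac{2p}{p+q}$, whose sum is $2$, so that the final square-root step is dimensionally consistent and matches the factor $\sqrt{E_{p,q}}\,\|f\|_{2,2d}\|g\|_{2,2d}$ on the right.
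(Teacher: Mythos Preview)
Your argument is correct and is precisely the intended deduction: the paper states the corollary without proof, as an immediate consequence of the preceding Heisenberg theorem, and your monotonicity comparison $|(x,w)|^{2p}\ge|x|^{2p}$, $|(x,w)|^{2q}\ge|w|^{2q}$ together with the exponent bookkeeping and a final square root is exactly how one obtains (\ref{hes4}) from that theorem.
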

%%%%%%%%%%%%%%%%%%%%%%%%%%%%%%%%%%%%%%%%%%%%%%%%%%%%%%%%%%%%%%%%%%%%%%%%%%%%%%%%%%%%%%%%%%%%%%%%%%%%%%%%%%%%%%%%%%%%%%%%%%%%%%%%%%%%%%%%%%%%%%%%%%%%%%%%%%%%%%%%%%%%%%%%%%%%%%%%%%%%%%%%%%%%%%%%%%%%%%%%%%%%%%%%%%%%%%%%%%%%%%%%%%%%%%%%%%%%%%%%%%%%%%%%%%%%%%%%%%%%%%%%%%%%%%%%%%%%%%%%%%%%%%%%%%%%%%%%%%%%
\begin{cor} Let $p$ and $q$ be two positive real numbers. Then there exists a nonnegative constant $E_{p,q}$ such that for every window function $g$ and for every function $f \in L^{2}(\mathbb{R}^{2d},\mathbb{H})$ we have
\begin{eqnarray*}
\||(x,w)|^{p}\mathcal{A}(f,g)\|_{2,4d}^{\frac{q}{p+q}} \||(x,w)|^{q}\mathcal{A}(f,g)\|_{2,4d}^{\frac{p}{p+q}} \geq \sqrt{E_{p,q}} \|f\|_{2,2d}  \|g\|_{2,2d},
\end{eqnarray*}
where $E_{p,q}$ is given by (\ref{Epq}).
\end{cor}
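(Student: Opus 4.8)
The plan is to reduce the statement for the ambiguity function $\mathcal{A}(f,g)$ directly to Corollary \ref{hesss4}, exploiting the pointwise modulus identity (\ref{p1}). The entire left-hand side is built from weighted $L^2$-norms of $A(f,g)$, and relation (\ref{p1}) asserts that $|A(f,g)(x,w)| = |\mathcal{G}_{g}(f)(x,w)|$ at every point $(x,w) \in \mathbb{R}^{2d}\times\mathbb{R}^{2d}$, so the two transforms are interchangeable inside any integral that depends only on their modulus.

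First I would observe that for any real exponent $r>0$ the integrands $|(x,w)|^{2r}|A(f,g)(x,w)|^{2}$ and $|(x,w)|^{2r}|\mathcal{G}_{g}(f)(x,w)|^{2}$ coincide pointwise: the weight $|(x,w)|^{2r}$ is identical and the squared moduli agree by (\ref{p1}). Integrating against $d\mu_{4d}$ then gives
$$\||(x,w)|^{r}\mathcal{A}(f,g)\|_{2,4d} = \||(x,w)|^{r}\mathcal{G}_{g}(f)\|_{2,4d}.$$
Specializing this to $r=p$ and to $r=q$ transports both factors on the left-hand side of the desired inequality into the corresponding factors for the QWFT.

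It then remains to invoke Corollary \ref{hesss4}, which yields
$$\||(x,w)|^{p}\mathcal{G}_{g}(f)\|_{2,4d}^{\frac{q}{p+q}} \||(x,w)|^{q}\mathcal{G}_{g}(f)\|_{2,4d}^{\frac{p}{p+q}} \geq \sqrt{E_{p,q}} \|f\|_{2,2d}\|g\|_{2,2d},$$
with $E_{p,q}$ as in (\ref{Epq}). Substituting the two norm equalities above produces exactly the claimed bound for $\mathcal{A}(f,g)$. There is essentially no analytic obstacle, since all the genuine work already lives in Corollary \ref{hesss4}; the only point deserving a word of care is that (\ref{p1}) is a true pointwise equality, which holds because the unit-modulus phase factors $e^{iw_{1}x_{1}/2}$ and $e^{jw_{2}x_{2}/2}$ relating $A(f,g)$ to $\mathcal{G}_{g}(f)$ leave $|A(f,g)|$ unchanged.
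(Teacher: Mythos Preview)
Your proposal is correct and follows exactly the intended approach: the paper states this corollary without proof precisely because it is immediate from the pointwise modulus identity (\ref{p1}) combined with Corollary~\ref{hesss4}, which is what you do.
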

%%%%%%%%%%%%%%%%%%%%%%%%%%%%%%%%%%%%%%%%%%%%%%%%%%%%%%%%%%%%%%%%%%%%%%%%%%%%%%%%%%%%%%%%%%%%%%%%%%%%%%%%%%%%%%%%%%%%%%%%%%%%%%%%%%%%%%%%%%%%%%%%%%%%%%%%%%%%%%%%%%%%%%%%%%%%%%%%%%%%%%%%%%%%%%%%%%%%%%%%%%
\begin{cor} Let $p$ and $q$ be two positive real numbers. Then there exists a nonnegative constant $E_{p,q}$ such that for every window function $g$ and for every function $f \in L^{2}(\mathbb{R}^{2d},\mathbb{H})$ we have
\begin{eqnarray*}
\||(x,w)|^{p}\mathcal{W}(f,g)\|_{2,4d}^{\frac{q}{p+q}} \||(x,w)|^{q}\mathcal{W}(f,g)\|_{2,4d}^{\frac{p}{p+q}} \geq 4^{-\frac{pq}{p+q}}\sqrt{E_{p,q}} \|f\|_{2,2d}  \|g\|_{2,2d},\\
\end{eqnarray*}
where $E_{p,q}$ is given by (\ref{Epq}).
\end{cor}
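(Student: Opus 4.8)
The plan is to deduce this inequality from Corollary \ref{hesss4}, which already furnishes the Heisenberg-type estimate for the windowed transform $\mathcal{G}_{g}$, by means of the pointwise identity (\ref{p2}) together with a dilation change of variables. First I would use the relation $|W(f,g)(x,w)| = 2^{2d}|\mathcal{G}_{\check{g}}(f)(2x,2w)|$ to rewrite each of the two weighted norms appearing on the left-hand side as a weighted norm of $\mathcal{G}_{\check{g}}(f)$.

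Concretely, for any $r>0$ I would start from
\begin{equation*}
\||(x,w)|^{r}\mathcal{W}(f,g)\|_{2,4d}^{2} = 2^{4d}\displaystyle\iint_{\mathbb{R}^{2d}\times\mathbb{R}^{2d}} |(x,w)|^{2r}\,|\mathcal{G}_{\check{g}}(f)(2x,2w)|^{2}\,d\mu_{4d}(x,w),
\end{equation*}
and then substitute $(y,\sigma)=(2x,2w)$. Under this change of variables $d\mu_{4d}(x,w)=2^{-4d}\,d\mu_{4d}(y,\sigma)$ and $|(x,w)|^{2r}=2^{-2r}|(y,\sigma)|^{2r}$, so that the Jacobian factor $2^{4d}$ cancels against $2^{-4d}$ and one is left with
\begin{equation*}
\||(x,w)|^{r}\mathcal{W}(f,g)\|_{2,4d} = 2^{-r}\,\||(y,\sigma)|^{r}\mathcal{G}_{\check{g}}(f)\|_{2,4d}.
\end{equation*}
Applying this with $r=p$ and $r=q$ and raising the results to the powers $\frac{q}{p+q}$ and $\frac{p}{p+q}$ respectively, the scalar prefactor collects to $\big(2^{-p}\big)^{\frac{q}{p+q}}\big(2^{-q}\big)^{\frac{p}{p+q}}=2^{-\frac{pq}{p+q}}\cdot 2^{-\frac{pq}{p+q}}=4^{-\frac{pq}{p+q}}$.

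It then remains to invoke Corollary \ref{hesss4} for $\mathcal{G}_{\check{g}}(f)$, which bounds the product $\||(y,\sigma)|^{p}\mathcal{G}_{\check{g}}(f)\|_{2,4d}^{\frac{q}{p+q}}\||(y,\sigma)|^{q}\mathcal{G}_{\check{g}}(f)\|_{2,4d}^{\frac{p}{p+q}}$ from below by $\sqrt{E_{p,q}}\,\|f\|_{2,2d}\|\check{g}\|_{2,2d}$, and to observe that $\|\check{g}\|_{2,2d}=\|g\|_{2,2d}$ since $\check{g}(x)=g(-x)$ and the measure $\mu_{2d}$ is reflection-invariant. Combining these gives the asserted inequality with constant $4^{-\frac{pq}{p+q}}\sqrt{E_{p,q}}$, with $E_{p,q}$ as in (\ref{Epq}). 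There is no genuine obstacle here beyond bookkeeping; the only point demanding care is tracking the two competing powers of $2$, namely the dilation Jacobian $2^{4d}$ against the weight rescaling $2^{-2r}$, so that they cancel exactly and leave precisely the announced factor $4^{-\frac{pq}{p+q}}$.
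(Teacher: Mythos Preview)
Your proof is correct and follows essentially the same route as the paper: use the pointwise identity (\ref{p2}) to pass from $W(f,g)$ to $\mathcal{G}_{\check g}(f)$, perform the dilation change of variables $(y,\sigma)=(2x,2w)$, and then invoke the Heisenberg-type estimate for $\mathcal{G}_{\check g}$ together with $\|\check g\|_{2,2d}=\|g\|_{2,2d}$. Your bookkeeping of the powers of $2$ is in fact cleaner than the paper's, which carries the exponent $|(x,w)|^{p}$ rather than $|(x,w)|^{2p}$ inside the integrals; your version makes the emergence of the factor $4^{-\frac{pq}{p+q}}$ transparent.
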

%%%%%%%%%%%%%%%%%%%%%%%%%%%%%%%%%%%%%%%%%%%%%%%%%%%%%%%%%%%%%%%%%%%%%%%%%%%%%%%%%%%%%%%%%%%%%%%%%%%%%%%%%%%%%%%%%%%%%%%%%%%%%%%%%%%%%%%%%%%%%%%%%%%%%%%%%%%%%%%%%%%%%%%%%%%%%%%%%%%%%%%%%%%%%%%%%%%%%%%%%%
\begin{proof}
By using the corollary (\ref{p1}), we have\\
$\bigg(\displaystyle\iint_{\mathbb{R}^{2d}\times\mathbb{R}^{2d}} |(x,w)|^{p} |\mathcal{W}(f,g)(x,w)|^{2} d\mu_{4d}(x,w)\bigg)^{\frac{q}{p+q}} \bigg(\displaystyle\iint_{\mathbb{R}^{2d}\times\mathbb{R}^{2d}} |(x,w)|^{q} |\mathcal{W}(f,g)(x,w)|^{2} d\mu_{4d}(x,w)\bigg)^{\frac{p}{p+q}}$\\
$= \bigg(\displaystyle\int_{\mathbb{R}^{4d}} |(x,w)|^{p} |2^{2d}\mathcal{G}_{\check{g}}(f)(2x,2w)|^{2} d\mu_{4d}(x,w)\bigg)^{\frac{q}{p+q}} \bigg(\displaystyle\int_{\mathbb{R}^{4d}} |(x,w)|^{q} |2^{2d}\mathcal{G}_{\check{g}}(f)(2x,2w)|^{2} d\mu_{4d}(x,w)\bigg)^{\frac{p}{p+q}}$\\
$= \bigg(\dfrac{1}{2}\bigg)^{\frac{pq}{p+q}} \bigg(\displaystyle\int_{\mathbb{R}^{4d}} |(y,\sigma)|^{p} |\mathcal{G}_{\check{g}}(f)(y,\sigma)|^{2} d\mu_{4d}(y,\sigma)\bigg)^{\frac{q}{p+q}} \bigg(\dfrac{1}{2}\bigg)^{\frac{pq}{p+q}}\bigg(\displaystyle\int_{\mathbb{R}^{4d}} |(y,\sigma)|^{q} |\mathcal{G}_{\check{g}}(f)(y,\sigma)|^{2} d\mu_{4d}(y,\sigma)\bigg)^{\frac{p}{p+q}}$\\
$= \bigg(\dfrac{1}{4}\bigg)^{\frac{pq}{p+q}} \bigg(\displaystyle\int_{\mathbb{R}^{4d}} |(y,\sigma)|^{p} |\mathcal{G}_{\check{g}}(f)(y,\sigma)|^{2} d\mu_{4d}(y,\sigma)\bigg)^{\frac{q}{p+q}} \bigg(\displaystyle\int_{\mathbb{R}^{4d}} |(y,\sigma)|^{q} |\mathcal{G}_{\check{g}}(f)(y,\sigma)|^{2} d\mu_{4d}(y,\sigma)\bigg)^{\frac{p}{p+q}}$\\
$\geq  \bigg(\dfrac{1}{4}\bigg)^{\frac{pq}{p+q}} E_{p,q} \|f\|_{2,2d}^{2}\|g\|_{2,2d}^{2} $
\end{proof}
\subsection{Local Price's inequality}
\begin{thm}
\mbox{}\\
Let $\varepsilon$ , $p$ be two positive real numbers such that $0< \varepsilon < 2d$ and $p \geq 1$ then there is a nonnegative constant $M_{\varepsilon,\mu}$ such that for every  quaternion windowed function  $g$ , for every function $f \in L^{2}(\mathbb{R}^{2d},\mathbb{H})$ and for every finite measurable subset $\Sigma$ of $\mathbb{R}^{2d}\times\mathbb{R}^{2d}$ , we have
\begin{equation}\label{2}
\|\chi_{\Sigma}\mathcal{G}_{g}(f)\|_{p,4d}^{p(p+1)} \leq M_{\varepsilon,p} \times (\mu_{4d}(\Sigma)) \times  \| |(x,w)|^{\varepsilon} \mathcal{G}_{g}(f) \|_{2 ,4d}^{\frac{4pd}{(2d+\varepsilon)}}(\|f\|_{2,2d}\|g\|_{2,2d})^{p(p-\frac{2d-\varepsilon}{2d+\varepsilon})}
\end{equation}
where
$M_{\varepsilon,p} = \bigg(\dfrac{2d+\varepsilon}{2^{\frac{\varepsilon(2d+2p+2)}{(2d+\varepsilon)(P+1)}}\varepsilon^{\frac{2\varepsilon}{2d+\varepsilon}}\Gamma(2d)^{\frac{\varepsilon}{(2d+\varepsilon)(p+1)}}(2d-\varepsilon)^{\frac{2d-\varepsilon}{2d+\varepsilon}+\frac{\varepsilon}{(2d+\varepsilon)(p+1)}}}\bigg)^{p(p+1)}$
\end{thm}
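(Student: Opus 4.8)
The plan is to reduce the statement to a one–parameter optimisation after two elementary reductions. Throughout write $F=\mathcal{G}_{g}(f)$, regarded as a function on $\mathbb{R}^{2d}\times\mathbb{R}^{2d}\simeq\mathbb{R}^{4d}$, and set $N=\|f\|_{2,2d}\,\|g\|_{2,2d}$. Two facts about $F$ are already at hand: the pointwise bound $\|F\|_{\infty,4d}\le N$ from (\ref{1}), and Plancherel's identity $\|F\|_{2,4d}=N$ from (\ref{orth}); in particular $\int|F|^{p+1}\le N^{p-1}\int|F|^{2}<+\infty$ since $p\ge1$, so $F\in L^{p+1}(\mathbb{R}^{4d},\mathbb{H})$. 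I would first note that the claimed inequality (\ref{2}) is homogeneous of degree $p(p+1)$ in $f$ (and in $g$): the left side scales like $N^{p(p+1)}$, while $\frac{4pd}{2d+\varepsilon}+p\bigl(p-\frac{2d-\varepsilon}{2d+\varepsilon}\bigr)=p^{2}+\frac{p(2d+\varepsilon)}{2d+\varepsilon}=p(p+1)$, so I may normalise $N=1$ and recover the general case at the end by scaling.

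Next I would apply H\"older's inequality on $\Sigma$ with the conjugate pair $\bigl(\tfrac{p+1}{p},\,p+1\bigr)$, writing $\int_{\Sigma}|F|^{p}\,d\mu_{4d}=\int|F|^{p}\chi_{\Sigma}\,d\mu_{4d}\le\|F\|_{p+1,4d}^{\,p}\,\mu_{4d}(\Sigma)^{1/(p+1)}$, so that after raising to the power $p+1$,
\[
\|\chi_{\Sigma}\mathcal{G}_{g}(f)\|_{p,4d}^{\,p(p+1)}\le\mu_{4d}(\Sigma)\,\|F\|_{p+1,4d}^{\,p(p+1)} .
\]
This isolates the factor $\mu_{4d}(\Sigma)$ with exactly the exponent $1$ demanded by (\ref{2}) and reduces the whole problem to controlling $\|F\|_{p+1,4d}^{\,p+1}$ by the weighted moment $I:=\||(x,w)|^{\varepsilon}F\|_{2,4d}^{2}$.

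The heart of the argument is a ball decomposition of $\mathbb{R}^{4d}$ that manufactures that moment. For a radius $r>0$ to be chosen, split $\int|F|^{p+1}=\int_{B_r}+\int_{B_r^{c}}$. On $B_r$ I would use $\|F\|_{\infty,4d}\le1$ together with the normalised volume $\mu_{4d}(B_r)=\frac{r^{4d}}{2^{2d}\,\Gamma(2d+1)}$ (this is where the $\Gamma(2d)$ and the powers of $2$ in $M_{\varepsilon,p}$ originate), giving $\int_{B_r}|F|^{p+1}\le \frac{r^{4d}}{2^{2d}\Gamma(2d+1)}$. On $B_r^{c}$, where $1\le r^{-2\varepsilon}|(x,w)|^{2\varepsilon}$, I would write $|F|^{p+1}=|F|^{p-1}|F|^{2}\le|F|^{2}$ and obtain $\int_{B_r^{c}}|F|^{p+1}\le r^{-2\varepsilon}\int_{\mathbb{R}^{4d}}|(x,w)|^{2\varepsilon}|F|^{2}\,d\mu_{4d}=r^{-2\varepsilon}I$. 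Hence $\|F\|_{p+1,4d}^{\,p+1}\le C\,r^{4d}+r^{-2\varepsilon}I$ with $C=\bigl(2^{2d}\Gamma(2d+1)\bigr)^{-1}$.

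Finally I would minimise $r\mapsto C r^{4d}+r^{-2\varepsilon}I$ over $r>0$: the optimum is at $r^{4d+2\varepsilon}\propto I$, and since $4d+2\varepsilon=2(2d+\varepsilon)$ the two terms both collapse to the single power $I^{\frac{2d}{2d+\varepsilon}}$, yielding $\|F\|_{p+1,4d}^{\,p+1}\le C'\,I^{\frac{2d}{2d+\varepsilon}}$ for an explicit $C'=C'(\varepsilon,d,p)$. Raising to the power $p$, combining with the H\"older step, and undoing the normalisation $N=1$ by the degree count above reproduces (\ref{2}): the moment reappears as $I^{\frac{2pd}{2d+\varepsilon}}=\||(x,w)|^{\varepsilon}\mathcal{G}_{g}(f)\|_{2,4d}^{\frac{4pd}{2d+\varepsilon}}$ and the factor $(\|f\|_{2,2d}\|g\|_{2,2d})^{p(p-\frac{2d-\varepsilon}{2d+\varepsilon})}$ is restored exactly. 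The condition $0<\varepsilon<2d$ is exactly what keeps the exponent $\tfrac{2d-\varepsilon}{2d+\varepsilon}$ positive and the constant finite. The main obstacle is not the analysis, each estimate being elementary, but the bookkeeping: to land on the precise closed form of $M_{\varepsilon,p}$ (with its $(2d-\varepsilon)$ and $(2d+\varepsilon)$ factors and its full $p$-dependence) one must carry the volume constant, the optimal $r$, and the H\"older exponents through together, most likely by performing the ball split directly on $\int_{\Sigma}|F|^{p}$ so that the $p$-dependent H\"older exponents feed into the optimisation rather than routing through the cleaner intermediate norm $\|F\|_{p+1,4d}$; one must also check, as in the Lieb proof, that $f\overline{T_{x}g}\in L^{1}\cap L^{2}$ so that the $L^{\infty}$ and Plancherel inputs are legitimately combined.
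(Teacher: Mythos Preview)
Your argument is correct and yields an inequality of exactly the stated shape, but the route differs from the paper's in two places. First, you separate $\Sigma$ at the outset via a single H\"older step, reducing everything to a bound on $\|F\|_{p+1,4d}^{p+1}$; the paper instead keeps $\Sigma$ throughout, splitting $\|\chi_{\Sigma}F\|_{p,4d}$ by the triangle inequality into $\|\chi_{B_s\cap\Sigma}F\|_{p,4d}+\|\chi_{B_s^c\cap\Sigma}F\|_{p,4d}$ and applying H\"older (with the same exponents $(\tfrac{p+1}{p},p+1)$) separately on each piece. Second, on the ball you use the crude volume bound $\int_{B_r}|F|^{p+1}\le\mu_{4d}(B_r)$, whereas the paper writes $\|F\chi_{B_s}\|_{1}\le\||(x,w)|^{\varepsilon}F\|_{2}\,\||(x,w)|^{-\varepsilon}\chi_{B_s}\|_{2}$ via Cauchy--Schwarz; it is the finiteness of $\int_{B_s}|(x,w)|^{-2\varepsilon}\,d\mu_{4d}$ that forces $\varepsilon<2d$ in the paper's argument (not merely the sign of $\tfrac{2d-\varepsilon}{2d+\varepsilon}$ as you suggest), and this step also injects the factor $(2d-\varepsilon)$ into the denominator of $M_{\varepsilon,p}$. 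Your approach is cleaner, in fact works for every $\varepsilon>0$, and produces a constant of the form $(C')^{p}$ with $C'$ independent of $p$; the price is that it does not recover the exact $M_{\varepsilon,p}$ of the statement, as you already anticipate when you note that one should ``perform the ball split directly on $\int_{\Sigma}|F|^{p}$''---that is precisely what the paper does.
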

%%%%%%%%%%%%%%%%%%%%%%%%%%%%%%%%%%%%%%%%%%%%%%%%%%%%%%%%%%%%%%%%%%%%%%%%%%%%%%%%%%
\textbf{Proof}\\
Without loss of generality we can assume that $\|f\|_{2,2d} = \|g\|_{2,2d} = 1$ , then for every positive real number $s$ , we have
$$\|\chi_{\Sigma}\mathcal{G}_{g}(f)\|_{p,4d} \leq \|\mathcal{G}_{g}(f)\chi_{B_{s}\cap\Sigma}\|_{p,4d} + \|\mathcal{G}_{g}(f)\chi_{B_{s}^{c}\cap\Sigma}\|_{p,4d}$$
where $B_{s}$ denotes the ball of $\mathbb{R}^{2d}\times\mathbb{R}^{2d}$ of radius $s$ . However , by Holder's inequality and relation (\ref{1}) we get for every $0< \varepsilon < 2d$
$$\begin{tabular}{lll}
$\|\mathcal{G}_{g}(f) \chi_{B_{s}\cap\Sigma}\|_{p,4d} $ & $=$ & $\bigg( \displaystyle\iint_{\mathbb{R}^{2d}\times\mathbb{R}^{2d}} |\mathcal{G}_{g}(f)(x,w)|^{p} \chi_{B_{s}}(x,w)\chi_{\Sigma}(x,w) d\mu_{4d}(x,w)\bigg)^{\frac{1}{p}}$\\
& $=$ & $\bigg(\displaystyle\iint_{\mathbb{R}^{2d}\times\mathbb{R}^{2d}} |\mathcal{G}_{g}(f)(x,w)|^{\frac{p^{2}}{p+1}} |\mathcal{G}_{g}(f)(x,w)|^{\frac{p}{p+1}} \chi_{B_{s}}(x,w)\chi_{\Sigma}(x,w) d\mu_{4d}(x,w)\bigg)^{\frac{1}{p}}$\\
& $\leq$ & $\|\mathcal{G}_{g}(f)\|_{\infty,4d}^{\frac{p}{p+1}}\bigg( \displaystyle\iint_{\mathbb{R}^{2d}\times\mathbb{R}^{2d}} |\mathcal{G}_{g}(f)(x,w)|^{\frac{p}{p+1}} \chi_{B_{s}}(x,w)\chi_{\Sigma}(x,w) d\mu_{4d}(x,w)\bigg)^{\frac{1}{p}}$\\
& $\leq$ & $\bigg( \displaystyle\iint_{\mathbb{R}^{2d}\times\mathbb{R}^{2d}} |\mathcal{G}_{g}(f)(x,w)|^{\frac{p}{p+1}} \chi_{B_{s}}(x,w)\chi_{\Sigma}(x,w) d\mu_{4d}(x,w)\bigg)^{\frac{1}{p}}$\\
& $\leq$ & $\bigg( \displaystyle\iint_{\mathbb{R}^{2d}\times\mathbb{R}^{2d}} |\mathcal{G}_{g}(f)(x,w)|\chi_{B_{s}}(x,w) d\mu_{4d}(x,w)\bigg)^{\frac{p}{p(p+1)}}$\\
& $\times$ & $\bigg(\displaystyle\iint_{\mathbb{R}^{2d}\times\mathbb{R}^{2d}}\chi_{\Sigma}(x,w) d\mu_{4d}(x,w)\bigg)^{\frac{1}{p(p+1)}}$\\
& $=$ & $\mu_{4d}(\Sigma)^{\frac{1}{p(p+1)}}\|\mathcal{G}_{g}(f)\chi_{B_{s}}\|_{1,4d}^{\frac{1}{p+1}}$\\
& $\leq $ & $\mu_{4d}(\Sigma)^{\frac{1}{p(p+1)}}\||(x,w)|^{\varepsilon} G_{g}(f) \|_{2,4d}^{\frac{1}{p+1}}$ $\| |(x,w)|^{-\varepsilon}\chi_{B_{s}}\|_{2,4d}^{\frac{1}{p+1}}$.
\end{tabular} $$
On the other hand
$$\| |(x,w)|^{-\varepsilon}\chi_{B_{s}}\|_{2,4d}^{\frac{1}{p+1}} = \bigg(\displaystyle\iint_{\mathbb{R}^{2d}\times\mathbb{R}^{2d}} |(x,w)|^{-2\varepsilon} \chi_{B_{s}} d\mu_{4d}(x,w)\bigg)^{\frac{1}{2(p+1)}} = \dfrac{s^{\frac{2d-\varepsilon}{p+1}}}{(2^{2d}\Gamma(2d)(2d-\varepsilon))^{\frac{1}{2(p+1)}}}$$
so
$$\|\mathcal{G}_{g}(f)\chi_{B_{s}\cap\Sigma}\|_{p,4d} \leq \dfrac{\mu_{4d}(\Sigma)^{\frac{1}{p(p+1)}}s^{\frac{2d-\varepsilon}{p+1}}}{(2^{2d}\Gamma(2d)(2d-\varepsilon))^{\frac{1}{2(p+1)}}} \| |(x,w)|^{\varepsilon}\mathcal{G}_{g}(f)\|_{2,4d}^{\frac{1}{p+1}}.$$
On the other hand, and again by Holder's inequality and relation (\ref{1}), we deduce that
$$\begin{tabular}{lll}
$\|\mathcal{G}_{g}(f) \chi_{B_{s}^{c}\cap\Sigma}\|_{p,4d}$ & $=$ &
$\bigg(\displaystyle\iint_{\mathbb{R}^{2d}\times\mathbb{R}^{2d}}
|\mathcal{G}_{g}(f)(x,w)|^{p}\chi_{B_{s}^{c}}(x,w)\chi_{\Sigma}(x,w) d\mu_{4d}(x,w)\bigg)^{\frac{1}{p}}$\\
& $=$ &
$\bigg(\displaystyle\iint_{\mathbb{R}^{2d}\times\mathbb{R}^{2d}}
|\mathcal{G}_{g}(f)(x,w)|^{\frac{2p}{p+1}}|\mathcal{G}_{g}(f)(x,w)|^{\frac{p(p-1)}{p+1}}\chi_{B_{s}^{c}}(x,w)\chi_{\Sigma}(x,w) d\mu_{4d}(x,w)\bigg)^{\frac{1}{p}}$\\
& $=$ & $\|G_{g}(f)\|_{\infty,4d}^{\frac{p-1}{p+1}}\bigg(\displaystyle\iint_{\mathbb{R}^{2d}\times\mathbb{R}^{2d}} |\mathcal{G}_{g}(f)(x,w)|^{\frac{2p}{p+1}} \chi_{B_{s}^{c}}(x,w)\chi_{\Sigma}(x,w) d\mu_{4d}(x,w)\bigg)^{\frac{1}{p}}$\\
& $\leq$ & $\bigg(\displaystyle\iint_{\mathbb{R}^{2d}\times\mathbb{R}^{2d}} |\mathcal{G}_{g}(f)(x,w)|^{\frac{2p}{p+1}} \chi_{B_{s}^{c}}(x,w)\chi_{\Sigma}(x,w) d\mu_{4d}(x,w)\bigg)^{\frac{1}{p}}$\\
& $=$ & $\bigg(\displaystyle\iint_{\mathbb{R}^{2d}\times\mathbb{R}^{2d}} (|\mathcal{G}_{g}(f)(x,w)|^{2}\chi_{B_{s}^{c}}(x,w))^{\frac{p}{p+1}} (\chi_{\Sigma}(x,w))^{\frac{1}{p+1}} d\mu_{4d}(x,w)\bigg)^{\frac{1}{p}}$\\
& $\leq$ & $\mu_{4d}(\Sigma)^{\frac{1}{p(p+1)}}\bigg(\displaystyle\iint_{\mathbb{R}^{2d}\times\mathbb{R}^{2d}} |\mathcal{G}_{g}(f)(x,w)|^{2}\chi_{B_{s}^{c}}(x,w) d\mu_{4d}(x,w)\bigg)^{\frac{1}{p+1}}$\\
& $=$ & $\mu_{4d}(\Sigma)^{\frac{1}{p(p+1)}} \| |(x,w)|^{-2\varepsilon}\chi_{B_{s}^{c}}\|_{\infty,4d}^{\frac{1}{p+1}} \| |(x,w)|^{\varepsilon} \mathcal{G}_{g}(f)\|_{2;4d}^{\frac{2}{p+1}}$\\
& $\leq$ & $\mu_{4d}(\Sigma)^{\frac{1}{p(p+1)}} s^{\frac{-2\varepsilon}{p+1}} \| |(x,w)|^{\varepsilon} \mathcal{G}_{g}(f)\|_{2,4d}^{\frac{2}{p+1}} .$
\end{tabular}$$
Hence,
$$\bigg(\displaystyle\int\displaystyle\int_{\Sigma} |\mathcal{G}_{g}(f)(x,w)|^{p} d\mu_{4d}(x,w)\bigg)^{\frac{1}{p}} \leq $$
$(\mu_{4d}(\Sigma))^{\frac{1}{p(p+1)}} \| |(x,w)|^{\varepsilon} \mathcal{G}_{g}(f) \|_{2 , 4d}^{\frac{1}{p+1}} \times \bigg(\dfrac{s^{\frac{2d-\varepsilon}{p+1}}}{(2^{2d}\Gamma(2d)(2d-\varepsilon))^{\frac{1}{2(p+1)}}} + \| |(x,w)|^{\varepsilon} \mathcal{G}_{g}(f)\|_{2, 4d}^{\frac{1}{p+1}} s^{-\frac{2\varepsilon}{p+1}}\bigg)$\\
In particular the inequality holds for
$$s_{0} = \bigg(\dfrac{2 \varepsilon \|(x,w)|^{\varepsilon} \mathcal{G}_{g}(f)\|_{2,4d}^{\frac{1}{p+1}}(2^{2d}\Gamma(2d)(2d-\varepsilon))^{\frac{1}{2(p+1)}}}{2d-\varepsilon}\bigg)^{\frac{p+1}{2d+\varepsilon}}$$
and therefore
\begin{center}
$\bigg(\displaystyle\int\displaystyle\int_{\Sigma} |\mathcal{G}_{g}(f)(x,w)|^{p} d\mu_{4d}(x,w)\bigg)^{\frac{1}{p}} \leq
(\mu_{4d}(\Sigma))^{\frac{1}{p(p+1)}} \| |(x,w)|^{\varepsilon} \mathcal{G}_{g}(f) \|_{2 , 4d}^{\frac{4d}{(2d+\varepsilon)(p+1)}} \times \bigg(\dfrac{2d+\varepsilon}{2^{\frac{\varepsilon(2d+2p+2)}{(2d+\varepsilon)(P+1)}}\varepsilon^{\frac{2\varepsilon}{2d+\varepsilon}}\Gamma(2d)^{\frac{\varepsilon}{(2d+\varepsilon)(p+1)}}(2d-\varepsilon)^{\frac{2d-\varepsilon}{2d+\varepsilon}+\frac{\varepsilon}{(2d+\varepsilon)(p+1)}}}\bigg)$
\end{center}
If $f = 0$, then (\ref{2}) is true. suppose that $f \neq 0$ , $\phi = \dfrac{f}{\|f\|_{2,2d}}$ and $\psi = \dfrac{g}{\|g\|_{2,2d}}$ ,we have $\|\phi\|_{2,2d} = \|\psi\|_{2,2d} = 1$ and
\begin{center}
$\bigg(\displaystyle\int\displaystyle\int_{\Sigma} |\mathcal{G}_{\psi}(\phi)(x,w)|^{p} d\mu_{4d}(x,w)\bigg)^{\frac{1}{p}} \leq
(\mu_{4d}(\Sigma))^{\frac{1}{p(p+1)}} \| |(x,w)|^{\varepsilon} \mathcal{G}_{\psi}(\phi) \|_{2 , \mathbb{R}^{2d}\times\mathbb{R}^{2d}}^{\frac{4d}{(2d+\varepsilon)(p+1)}} \times \bigg(\dfrac{2d+\varepsilon}{2^{\frac{\varepsilon(2d+2p+2)}{(2d+\varepsilon)(P+1)}}\varepsilon^{\frac{2\varepsilon}{2d+\varepsilon}}\Gamma(2d)^{\frac{\varepsilon}{(2d+\varepsilon)(p+1)}}(2d-\varepsilon)^{\frac{2d-\varepsilon}{2d+\varepsilon}+\frac{\varepsilon}{(2d+\varepsilon)(p+1)}}}\bigg)$ .
\end{center}
We have
$$\mathcal{G}_{\psi}(\phi) = \dfrac{\mathcal{G}_{g}(f)}{\|f\|_{2,2d}\|g\|_{2,2d}},$$
then
$$\begin{tabular}{lll}
$\bigg(\displaystyle\iint_{\Sigma} |\mathcal{G}_{\psi}(\phi)(x,w)|^{p} d\mu_{4d}(x,w)\bigg)^{\frac{1}{p}} $ & $\leq$ & $\Delta (\mu_{4d}(\Sigma))^{\frac{1}{p(p+1)}} \| |(x,w)|^{\varepsilon} \mathcal{G}_{\psi}(\phi) \|_{2 , 4d}^{\frac{4d}{(2d+\varepsilon)(p+1)}} $\\
& $\leq$ & $\Delta \dfrac{(\mu_{4d}(\Sigma))^{\frac{1}{p(p+1)}} \| |(x,w)|^{\varepsilon} \mathcal{G}_{g}(f) \|_{2 , 4d}^{\frac{4d}{(2d+\varepsilon)(p+1)}}}{(\|f\|_{2,2d}\|g\|_{2,2d})^{\frac{4d}{(2d+\varepsilon)(p+1)}}}$ ;
\end{tabular}$$
where  $\Delta =  \dfrac{2d+\varepsilon}{2^{\frac{\varepsilon(2d+2p+2)}{(2d+\varepsilon)(P+1)}}\varepsilon^{\frac{2\varepsilon}{2d+\varepsilon}}\Gamma(2d)^{\frac{\varepsilon}{(2d+\varepsilon)(p+1)}}(2d-\varepsilon)^{\frac{2d-\varepsilon}{2d+\varepsilon}+\frac{\varepsilon}{(2d+\varepsilon)(p+1)}}}$ .\\
Hence
$$\bigg(\displaystyle\int\displaystyle\int_{\Sigma} |\mathcal{G}_{g}(f)(x,w)|^{p} d\mu_{4d}(x,w)\bigg)^{\frac{1}{p}} \leq \Delta (\mu_{4d}(\Sigma))^{\frac{1}{p(p+1)}} \| |(x,w)|^{\varepsilon} \mathcal{G}_{g}(f) \|_{2 , 4d}^{\frac{4d}{(2d+\varepsilon)(p+1)}} (\|f\|_{2,2d}\|g\|_{2,2d})^{1-\frac{4d}{(2d+\varepsilon)(p+1)}}$$
and consequently
$$\|\chi_{\Sigma}\mathcal{G}_{g}(f)\|_{p,4d}^{p(p+1)} \leq \Delta^{p(p+1)} \times (\mu_{4d}(\Sigma)) \times  \| |(x,w)|^{\varepsilon} \mathcal{G}_{g}(f) \|_{2 , 4d}^{\frac{4pd}{(2d+\varepsilon)}}(\|f\|_{2,2d}\|g\|_{2,2d})^{p(p-\frac{2d-\varepsilon}{2d+\varepsilon})}$$
we note
$$M_{\varepsilon,p} = \Delta^{p(p+1)} = \bigg(\dfrac{2d+\varepsilon}{2^{\frac{\varepsilon(2d+2p+2)}{(2d+\varepsilon)(P+1)}}\varepsilon^{\frac{2\varepsilon}{2d+\varepsilon}}\Gamma(2d)^{\frac{\varepsilon}{(2d+\varepsilon)(p+1)}}(2d-\varepsilon)^{\frac{2d-\varepsilon}{2d+\varepsilon}+\frac{\varepsilon}{(2d+\varepsilon)(p+1)}}}\bigg)^{p(p+1)}$$
finally
$$\|\chi_{\Sigma}\mathcal{G}_{g}(f)\|_{p,4d}^{p(p+1)} \leq M_{\varepsilon,p} \times (\mu_{4d}(\Sigma)) \times  \| |(x,w)|^{\varepsilon} \mathcal{G}_{g}(f) \|_{2 , 4d}^{\frac{4pd}{(2d+\varepsilon)}}(\|f\|_{2,2d}\|g\|_{2,2d})^{p(p-\frac{2d-\varepsilon}{2d+\varepsilon})}$$
the proof is complete .
%%%%%%%%%%%%%%%%%%%%%%%%%%%%%%%%%%%%%%%%%%%%%%%%%%%%%%%%%%%%%%%%%%%%%%%%%%%%%%%%%%%%%%%%%%%%%%%%%%%%%%%%%%%%%%%%%%%%%%%%%%%%%%%%%%%%%%%%%%%%%%%%%%%%%%%%%%%%%%%%%%%%
\begin{cor}
\mbox{}\\
Let $\varepsilon$ , p be two positive real numbers such that $0<\varepsilon<2d$ and $p \geq 1$, then there is a non negative constant $M_{\varepsilon,p}$ such that for every function $f \in L^{2}(\mathbb{R}^{2d},\mathbb{H})$ and for every finite measurable set $\Sigma$ of $\mathbb{R}^{2d}\times\mathbb{R}^{2d}$, we have
\begin{equation}\label{3}
\|\chi_{\Sigma}A(f,g)\|_{p,4d}^{p(p+1)} \leq M_{\varepsilon,p} \times (\mu_{4d}(\Sigma)) \times  \| |(x,w)|^{\varepsilon} A(f,g) \|_{2 , 4d}^{\frac{4pd}{(2d+\varepsilon)}}(\|f\|_{2,2d}\|g\|_{2,2d})^{p(p-\frac{2d-\varepsilon}{2d+\varepsilon})}.
\end{equation}
\end{cor}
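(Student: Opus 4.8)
The plan is to deduce this corollary directly from the Local Price's inequality (\ref{2}) by exploiting the pointwise modulus identity between the radar quaternion ambiguity function and the QWFT. The essential observation is Lemma (\ref{p1}), which asserts that
\begin{equation*}
|A(f,g)(x,w)| = |\mathcal{G}_{g}(f)(x,w)| \qquad \text{for all } (x,w) \in \mathbb{R}^{2d}\times\mathbb{R}^{2d}.
\end{equation*}
Since every norm appearing on both sides of (\ref{3}) is built from integrals of the modulus of the transform raised to a power (possibly weighted by the factor $|(x,w)|^{\varepsilon}$ or restricted by the characteristic function $\chi_{\Sigma}$), and none of these integrals sees the quaternionic phase, the relevant quantities for $A(f,g)$ coincide identically with those for $\mathcal{G}_{g}(f)$.

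Concretely, I would first record the two equalities that follow at once from (\ref{p1}):
\begin{equation*}
\|\chi_{\Sigma} A(f,g)\|_{p,4d} = \|\chi_{\Sigma}\mathcal{G}_{g}(f)\|_{p,4d}
\end{equation*}
because $|A(f,g)|^{p}\chi_{\Sigma} = |\mathcal{G}_{g}(f)|^{p}\chi_{\Sigma}$ pointwise, and likewise
\begin{equation*}
\big\| |(x,w)|^{\varepsilon} A(f,g)\big\|_{2,4d} = \big\| |(x,w)|^{\varepsilon}\mathcal{G}_{g}(f)\big\|_{2,4d},
\end{equation*}
since the weight $|(x,w)|^{2\varepsilon}$ multiplies the common modulus $|A(f,g)|^{2} = |\mathcal{G}_{g}(f)|^{2}$. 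The factor $(\|f\|_{2,2d}\|g\|_{2,2d})^{p(p-\frac{2d-\varepsilon}{2d+\varepsilon})}$ and the constant $M_{\varepsilon,p}$ are unchanged, as they involve neither transform explicitly.

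Finally, I would substitute these equalities into the Local Price inequality (\ref{2}) applied to $\mathcal{G}_{g}(f)$, which immediately yields (\ref{3}) with the same constant $M_{\varepsilon,p}$ given in the theorem. There is no genuine obstacle here: the entire content of the corollary is the invariance of the modulus under the passage from $\mathcal{G}_{g}(f)$ to $A(f,g)$ recorded in (\ref{p1}), so the proof is a one-line reduction once that identity is invoked. The only point deserving a word of care is to confirm that both the $p$-norm over $\Sigma$ and the $\varepsilon$-weighted $2$-norm depend on the transform solely through $|A(f,g)| = |\mathcal{G}_{g}(f)|$, which is transparent from their integral definitions.
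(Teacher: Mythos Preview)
Your proposal is correct and follows essentially the same argument as the paper: invoke the modulus identity (\ref{p1}) to convert the $\chi_{\Sigma}$-localized $p$-norm and the $|(x,w)|^{\varepsilon}$-weighted $2$-norm of $A(f,g)$ into the corresponding quantities for $\mathcal{G}_{g}(f)$, then apply the Local Price inequality (\ref{2}). The paper's proof is precisely this substitution, stated in two lines.
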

%%%%%%%%%%%%%%%%%%%%%%%%%%%%%%%%%%%%%%%%%%%%%%%%%%%%%%%%%%%%%%%%%%%%%%%%%%%%%%%%%%%%%%%%%%%%%%%%%%%%%%%%%%%%%%%%%%%%%%%%%%%%%%%%%%%%%%%%%%%%%%%%%%%%%%%%%%%%%%%%%%%%
\textbf{Proof}\\
We have from the relation (\ref{p1}), $|A(f,g)(x,w)| = |\mathcal{G}_{g}(f)(x,w)|$, then
$$\|\chi_{\Sigma}A(f,g)\|_{p,4d} = \|\chi_{\Sigma} \mathcal{G}_{g}(f)\|_{p,4d}$$
and
$$\| |(x,w)|^{\varepsilon} A(f,g)\|_{p,4d} = \||(x,w)|^{\varepsilon} \mathcal{G}_{g}(f)\|_{p,4d} $$
then, with relation (\ref{2}), we have
$$\|\chi_{\Sigma}A(f,g)\|_{p,4d}^{p(p+1)} \leq M_{\varepsilon,p} \times (\mu_{4d}(\Sigma)) \times  \| |(x,w)|^{\varepsilon} A(f,g) \|_{2 , 4d}^{\frac{4pd}{(2d+\varepsilon)}}(\|f\|_{2,2d}\|g\|_{2,2d})^{p(p-\frac{2d-\varepsilon}{2d+\varepsilon})}.$$
%%%%%%%%%%%%%%%%%%%%%%%%%%%%%%%%%%%%%%%%%%%%%%%%%%%%%%%%%%%%%%%%%%%%%%%%%%%%%%%%%%%%%%%%%%%%%%%%%%%%%%%%%%%%%%%%%%%%%%%%%%%%%%%%%%%%%%%%%%%%%%%%%%%%%%%%%%%%%%%%%%%%
\begin{cor}
\mbox{}\\
Let $\varepsilon$ , p be two positive real numbers such that $0<\varepsilon<2d$ and $p \geq 1$, then there is a non negative constant $N_{\varepsilon,p}$ such that for every quaternion function $g$ , for every  function $f \in L^{2}(\mathbb{R}^{2d},\mathbb{H})$ and for every finite measurable subset $\Sigma$ of $\mathbb{R}^{2d}\times\mathbb{R}^{2d}$, we have
\begin{equation}\label{3}
\|\chi_{\Sigma}W(f,g)\|_{p,4d}^{p(p+1)} \leq N_{\varepsilon,p} \times (\mu_{4d}(\Sigma)) \times  \| |(x,w)|^{\varepsilon} W(f,g) \|_{2 , 4d}^{\frac{4pd}{(2d+\varepsilon)}}(\|f\|_{2,2d}\|g\|_{2,2d})^{p(p-\frac{2d-\varepsilon}{2d+\varepsilon})}
\end{equation}
where
$N_{\varepsilon,p} = 2^{2d(p-2)(p+1) + 4d + \frac{4pd\varepsilon}{(2d+\varepsilon)}}\bigg(\dfrac{2d+\varepsilon}{2^{\frac{\varepsilon(2d+2p+2)}{(2d+\varepsilon)(P+1)}}\varepsilon^{\frac{2\varepsilon}{2d+\varepsilon}}\Gamma(2d)^{\frac{\varepsilon}{(2d+\varepsilon)(p+1)}}(2d-\varepsilon)^{\frac{2d-\varepsilon}{2d+\varepsilon}+\frac{\varepsilon}{(2d+\varepsilon)(p+1)}}}\bigg)^{p(p+1)}$.
\end{cor}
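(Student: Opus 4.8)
The plan is to deduce this from the Local Price's inequality (\ref{2}) already established for $\mathcal{G}_g$, by exploiting the pointwise identity (\ref{p2}), namely $|W(f,g)(x,w)| = 2^{2d}|\mathcal{G}_{\check g}(f)(2x,2w)|$, which transfers every quantity appearing in the statement from $W(f,g)$ to the windowed transform $\mathcal{G}_{\check g}(f)$ evaluated at the dilated point $(2x,2w)$. Throughout I would use the dilation $(y,\sigma)=(2x,2w)$, under which the normalized measure scales as $d\mu_{4d}(x,w)=2^{-4d}\,d\mu_{4d}(y,\sigma)$, the modulus scales as $|(x,w)|=\tfrac12|(y,\sigma)|$, and the indicator transforms as $\chi_\Sigma(x,w)=\chi_{2\Sigma}(y,\sigma)$ with $2\Sigma=\{(2x,2w):(x,w)\in\Sigma\}$.

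First I would compute the truncated $L^p$-norm. Inserting (\ref{p2}) and applying the above change of variables gives
\begin{equation*}
\|\chi_\Sigma W(f,g)\|_{p,4d}^{p}=2^{2dp}\iint_\Sigma |\mathcal{G}_{\check g}(f)(2x,2w)|^{p}\,d\mu_{4d}(x,w)=2^{2dp-4d}\,\|\chi_{2\Sigma}\mathcal{G}_{\check g}(f)\|_{p,4d}^{p},
\end{equation*}
so that raising to the power $p+1$ yields $\|\chi_\Sigma W(f,g)\|_{p,4d}^{p(p+1)}=2^{2d(p-2)(p+1)}\|\chi_{2\Sigma}\mathcal{G}_{\check g}(f)\|_{p,4d}^{p(p+1)}$. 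The same substitution applied to the weighted norm gives $\||(x,w)|^{\varepsilon}W(f,g)\|_{2,4d}=2^{-\varepsilon}\||(x,w)|^{\varepsilon}\mathcal{G}_{\check g}(f)\|_{2,4d}$, and the scaling relation (\ref{18}) with $c=2$ gives $\mu_{4d}(2\Sigma)=2^{4d}\mu_{4d}(\Sigma)$.

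Next I would apply the inequality (\ref{2}) to the function $\mathcal{G}_{\check g}(f)$ over the measurable set $2\Sigma$, using that $\|\check g\|_{2,2d}=\|g\|_{2,2d}$ because $\check g(x)=g(-x)$ and $\mu_{2d}$ is invariant under $x\mapsto -x$. This bounds $\|\chi_{2\Sigma}\mathcal{G}_{\check g}(f)\|_{p,4d}^{p(p+1)}$ by $M_{\varepsilon,p}\,\mu_{4d}(2\Sigma)\,\||(x,w)|^{\varepsilon}\mathcal{G}_{\check g}(f)\|_{2,4d}^{\frac{4pd}{2d+\varepsilon}}(\|f\|_{2,2d}\|g\|_{2,2d})^{p(p-\frac{2d-\varepsilon}{2d+\varepsilon})}$. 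Substituting the three scaling relations back converts $\mu_{4d}(2\Sigma)$ into $2^{4d}\mu_{4d}(\Sigma)$ and $\||(x,w)|^{\varepsilon}\mathcal{G}_{\check g}(f)\|_{2,4d}^{\frac{4pd}{2d+\varepsilon}}$ into $2^{\frac{4pd\varepsilon}{2d+\varepsilon}}\||(x,w)|^{\varepsilon}W(f,g)\|_{2,4d}^{\frac{4pd}{2d+\varepsilon}}$, exactly reproducing the right-hand side of the claim.

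The only real work is bookkeeping the powers of $2$: collecting $2^{2d(p-2)(p+1)}$ from the $L^p$-norm transfer, $2^{4d}$ from $\mu_{4d}(2\Sigma)$, and $2^{\frac{4pd\varepsilon}{2d+\varepsilon}}$ from the weighted norm, one finds $N_{\varepsilon,p}=2^{2d(p-2)(p+1)+4d+\frac{4pd\varepsilon}{2d+\varepsilon}}M_{\varepsilon,p}$, which is precisely the stated constant since $M_{\varepsilon,p}$ is the constant of (\ref{2}). The main point to be careful about is the interaction between the explicit prefactor $2^{2d}$ in (\ref{p2}), the Jacobian $2^{-4d}$ of the dilation, and the measure-scaling $2^{4d}$ of relation (\ref{18}), which could easily be miscounted; everything else is a direct substitution.
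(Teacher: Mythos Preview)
Your proof is correct and follows essentially the same approach as the paper: both arguments use the pointwise identity (\ref{p2}) and the substitution $(y,\sigma)=(2x,2w)$ to rewrite $\|\chi_\Sigma W(f,g)\|_{p,4d}^{p(p+1)}$ as $2^{2d(p-2)(p+1)}\|\chi_{2\Sigma}\mathcal{G}_{\check g}(f)\|_{p,4d}^{p(p+1)}$, apply the Local Price inequality (\ref{2}) for $\mathcal{G}_{\check g}$ on the dilated set, and then convert $\mu_{4d}(2\Sigma)$ and $\||(x,w)|^\varepsilon\mathcal{G}_{\check g}(f)\|_{2,4d}$ back in terms of $\Sigma$ and $W(f,g)$. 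Your bookkeeping of the powers of $2$ and your explicit remark that $\|\check g\|_{2,2d}=\|g\|_{2,2d}$ are exactly what the paper does (the latter only implicitly).
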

%%%%%%%%%%%%%%%%%%%%%%%%%%%%%%%%%%%%%%%%%%%%%%%%%%%%%%%%%%%%%%%%%%%%%%%%%%%%%%%%%%%%%%%%%%%%%%%%%%%%%%%%%%%%%%%%%%%%%%%%%%%%%%%%%%%%%%%%%%%%%%%%%%%%%%%%%%%%%%%%%%%%%%%%%%%%%%%%%%%%%%%%%%%%%%%%%%%%%%%%%%
\textbf{Proof}\\
We have
$$\begin{tabular}{lll}
$\|\chi_{\Sigma} W(f,g)\|_{p,4d}^{p}$ & $=$ & $2^{2dp} \displaystyle\iint_{\mathbb{R}^{2d}\times\mathbb{R}^{2d}} |\chi_{\Sigma}(x,w)\mathcal{G}_{\check{g}}(f)(2x,2w)|^{p} d\mu_{4d}(x,w)$\\
& $=$ & $2^{2d(p-2)} \displaystyle\iint_{\mathbb{R}^{2d}\times\mathbb{R}^{2d}} |\chi_{\Omega}(y,\lambda) \mathcal{G}_{\check{g}}(f)(y,\lambda)|^{p} d\mu_{4d}(y,\lambda)$\\
& $=$ & $2^{2d(p-2)} \|\chi_{\Omega} \mathcal{G}_{\check{g}}(f)\|_{p,4d}^{p}$
\end{tabular}$$
where
$$\Omega = \bigg\{ (y,\lambda)\in \mathbb{R}^{2d}\times\mathbb{R}^{2d} |(\dfrac{y}{2},\dfrac{\lambda}{2}) \in \Sigma \bigg\} = \{ (2x,2w) \in \mathbb{R}^{2d}\times\mathbb{R}^{2d} | (x,w) \in \Sigma \}$$

then from the relation of the Local Price's inequality  we get
$$\begin{tabular}{lll}
$\|\chi_{\Sigma} W(f,g) \|_{p,4d}^{p(p+1)}$& $=$ & $2^{2d(p-2)(p+1)} \|\chi_{\Omega} \mathcal{G}_{\check{g}}(f)\|_{p,4d}^{p(p+1)}$\\
& $ \leq$ & $ 2^{2d(p-2)(p+1)} M_{\varepsilon,p} \times (\mu_{4d}(\Omega)) \times  \| |(x,w)|^{\varepsilon} \mathcal{G}_{\check{g}}(f) \|_{2 , 4d}^{\frac{4pd}{(2d+\varepsilon)}}(\|f\|_{2,2d}\|g\|_{2,2d})^{p(p-\frac{2d-\varepsilon}{2d+\varepsilon})}$\\
& $\leq$ & $2^{2d(p-2)(p+1)}2^{4d} M_{\varepsilon,p} \times (\mu_{4d}(\Sigma)) \times  \| |(x,w)|^{\varepsilon} \mathcal{G}_{\check{g}}(f) \|_{2 , 4d}^{\frac{4pd}{(2d+\varepsilon)}}(\|f\|_{2,2d}\|g\|_{2,2d})^{p(p-\frac{2d-\varepsilon}{2d+\varepsilon})}.$\\
\end{tabular}$$
But
$$\begin{tabular}{lll}
$\| |(x,w)|^{\varepsilon} \mathcal{G}_{\check{g}}(f)\|_{2,4d}^{2}$ & $=$ & $\displaystyle\iint _{\mathbb{R}^{2d}\times\mathbb{R}^{2d}} |(x,w)|^{2\varepsilon} |\mathcal{G}_{\check{g}}(f)(x,w)|^{2} d\mu_{4d}(x,w)$\\
& $=$ & $ 2^{4d}\displaystyle\iint _{\mathbb{R}^{2d}\times\mathbb{R}^{2d}} |(2y,2\lambda)|^{2\varepsilon} |\mathcal{G}_{\check{g}}(f)(2y,2\lambda)|^{2} d\mu_{4d}(y,\lambda)$\\
& $=$ & $ \displaystyle\iint _{\mathbb{R}^{2d}\times\mathbb{R}^{2d}} |(2y,2\lambda)|^{2\varepsilon} |W(f,g)(y,\lambda)|^{2} d\mu_{4d}(y,\lambda)$\\
& $=$ & $2^{2\varepsilon} \displaystyle\iint _{\mathbb{R}^{2d}\times\mathbb{R}^{2d}} |(y,\lambda)|^{2\varepsilon} |W(f,g)(y,\lambda)|^{2} d\mu_{4d}(y,\lambda)$\\
& $=$ & $2^{2\varepsilon} \| | (x,w)|^{\varepsilon} W(f,g)\|_{2,4d}^{2}.$
\end{tabular}$$
We finally get
$$\begin{tabular}{lll}
$\|\chi_{\Sigma} W(f,g)\|_{p,4d}^{p(p+1)}$ & $\leq$ & $2^{2d(p-2)(p+1)}2^{4d} 2^{\frac{4pd\varepsilon}{(2d+\varepsilon)}} M_{\varepsilon,p} \times (\mu_{4d}(\Sigma)) \times  \| |(x,w)|^{\varepsilon} W(f,g) \|_{2 , 4d}^{\frac{4pd}{(2d+\varepsilon)}}$\\
& $\times$ & $(\|f\|_{2,2d}\|g\|_{2,2d})^{p(p-\frac{2d-\varepsilon}{2d+\varepsilon})}$\\
& $=$ & $ N_{\varepsilon,p} \mu_{4d}(\Sigma) \| |(x,w)|^{\varepsilon} W(f,g) \|_{2,4}^{\frac{4pd}{2d+\varepsilon}} (\|f\|_{2,2d} \|g\|_{2,2d})^{p(p-\frac{2d-\varepsilon}{2d+\varepsilon})}.$
\end{tabular}$$

%%%%%%%%%%%%%%%%%%%%%%%%%%%%%%%%%%%%%%%%%%%%%%%%%%%%%%%%%%%%%%%%%%%%%%%%%%%%%%%%%%%%%%%%%%%%%%%%%%%%%%%%%%%%%%%%%%%%%%%%%%%%%%%%%%%%%%%%%%%%%%%%%%%%%%%%%%%%%%%%%%%%%%%%%%%%%%%%%%%%%%%%%%%%%%%%%%%%%%%%%%%%%%%%%%%%%%%%%%%%%%%%%%%%%%%%%%%%%%%%%%%%%%%%%%%%%%%%%%%%%%%%%%%%%%%%%%%%%%%%%%%%%%%%%%%%%%%%%%%%
\bibliographystyle{plain}

\addcontentsline{toc}{chapter}{Bibliographie.}

\end{document}